\newtheorem{theorem}{Theorem}[section]
\newtheorem*{claim*}{Claim}
\newtheorem{corollary}[theorem]{Corollary}
\newtheorem{lemma}[theorem]{Lemma}
\newtheorem{proposition}[theorem]{Proposition}
\newtheorem{remark}[theorem]{Remark}
\newcommand{\be}[1]{\begin{equation}\label{#1}}
\newcommand{\ee}{\end{equation}}
\numberwithin{equation}{section}
\newcommand{\ba}[1]{\begin{align}\label{#1}}
\newcommand{\ea}{\end{align}}
\numberwithin{equation}{section}
\newcommand{\ben}{\begin{equation*}}
\newcommand{\een}{\end{equation*}}
\numberwithin{equation}{section}
\renewenvironment{proof}[1][\relax]%s
  {\paragraph{Proof\ifx#1\relax\else~of #1\fi}}%
  {~\hfill$\square$\par\bigskip}
\newcommand{\icomp}{{\rm i}}
\newcommand{\calH}{\mathcal{H}}
\newcommand{\calS}{\mathcal{S}}
\newcommand{\bbH}{\mathbb{H}}
\newcommand{\bbL}{\mathbb{L}}
\newcommand{\bbN}{\mathbb{N}}
\newcommand{\bbR}{\mathbb{R}}
\newcommand{\bbT}{\mathbb{T}}
\newcommand{\bbZ}{\mathbb{Z}}
\newcommand{\de}{\delta}
\newcommand{\ep}{\varepsilon}
\newcommand{\eps}{\ep}
\newcommand{\om}{\omega}
\newcommand{\Om}{\Omega}
\newcommand{\si}{\sigma}
\newcommand{\la}{\lambda}
\newcommand{\La}{\Lambda}
\newcommand{\De}{\Delta}
\newcommand{\bp}{\mathbf{p}}
\newcommand{\Tr}{{\rm Tr}}
\newcommand{\rk}[1]{\bgroup\color{red}%
  \par\medskip\hrule\smallskip%
  \noindent\textbf{#1}%
  \par\smallskip\hrule\medskip\egroup}
\newcommand{\bfR}{\mathsf{R}}
\newcommand{\pd}{\partial}
\newcommand{\ind}{\mathbf{1}}
\def\mik{1}
\newcommand\cpsfrag[2]{\ifnum\mik=1\psfrag{#1}{#2}\fi}
\newcommand{\sym}{{\rm sym}}
\renewcommand{\ba}{{\mathbf a}}
\renewcommand{\icomp}{i}
\title{Discontinuity of the phase transition for the planar random-cluster and Potts models with $q>4$}
\author{
	Hugo Duminil-Copin\thanks{Institut des Hautes \'Etudes Scientifiques}\ \thanks{Universit\'e de Gen\`eve}, 
	Maxime Gagnebin\addtocounter{footnote}{-1}\footnotemark, 
	Matan Harel\addtocounter{footnote}{-2}\footnotemark, 
	Ioan Manolescu\addtocounter{footnote}{1}\thanks{University of Fribourg}, 
	Vincent Tassion\addtocounter{footnote}{-2}\footnotemark}
\date{\today}
\begin{document}

\maketitle

\begin{abstract}
We prove that the $q$-state Potts model and the random-cluster model with cluster weight $q>4$ undergo a discontinuous phase transition on the square lattice. More precisely, we show
\begin{enumerate}
\item Existence of multiple infinite-volume measures for the critical Potts and random-cluster models,
\item Ordering for the measures with monochromatic (resp.~wired) boundary conditions for the critical Potts model (resp.~random-cluster model), and
\item Exponential decay of correlations for the measure with free boundary conditions for both the critical Potts and random-cluster models.
\end{enumerate}
The proof is based on a rigorous computation of the Perron-Frobenius eigenvalues of the diagonal blocks of the transfer matrix of the six-vertex model,
whose ratios are then related to the correlation length of the random-cluster model. 

As a byproduct, we rigorously compute the correlation lengths of the critical random-cluster and Potts models, and show that they behave as~$\exp(\pi^2/\sqrt{q-4})$ as~$q$ tends to~4.
\end{abstract}

\section{Introduction}

\subsection{Motivation} 

Lattice spin models were introduced to describe specific experiments;
they were later found to be illustrative of a large variety of physical phenomena.
Depending on a parameter (most commonly temperature), they exhibit different macroscopic behaviours (also called phases), 
and phase transitions between them. 
%Their essential feature is that they undergo phase transitions between different macroscopic behaviours.
Phase transitions may be continuous or discontinuous, and determining their type 
is one of the first steps towards a deeper understanding of the model.

In recent years, the Potts and random-cluster models have been the object of revived interest after new rigorous results were proved. In \cite{BefDum12}, the critical points of the models were determined for any $q\ge1$. In \cite{DumSidTas16}, the models were proved to undergo a continuous phase transition for $1 \le q\le 4$, thus proving half of a famous prediction by Baxter. The object of this paper is to prove the second half of his prediction - namely, that the phase transition is discontinuous when $q>4$.

\subsection{Results for the Potts model} 

The Potts model was introduced by Potts \cite{Pot52} following a suggestion of his adviser Domb. While the model received little attention early on, it became the object of great interest in the last 50 years. Since then, mathematicians and physicists have been studying it intensively, and much is known about its rich behaviour, especially in two dimensions. For a review of the physics results, see \cite{Wu82}.
%In dimensions greater than two, many important questions still are out of reach. %In this work, we focus exclusively on the planar setting.
%\im{May need to give some references here}

In this paper, we will focus on the case of the square lattice $\bbZ^2$ composed of vertices $x=(x_1,x_2)\in\bbZ^2$, and edges between nearest neighbours. In the $q$-state ferromagnetic Potts model (where $q$ is a positive integer larger than or equal to $2$), each vertex of a graph receives a {\em spin} taking value in $\{1,\dots,q\}$. The energy of a configuration is then proportional to the number of neighbouring vertices of the graph having different spins. 
Formally, the Potts measure on a finite subgraph $G = (V,E)$ of the square lattice, at inverse temperature $\beta > 0$ and boundary conditions $i\in\{0,1,\dots,q\}$, is defined for every $\sigma\in\{1,\dots,q\}^V$ by the formula
\begin{equation}\label{eq:Gibbs}
\mu_{G,\beta}^i[\sigma]:=\frac{\displaystyle\exp[-\beta {\bf H}^i_{G}(\sigma)]}{\displaystyle\sum_{\sigma'\in\{1,\dots,q\}^V}\exp[-\beta {\bf H}_{G}^i(\sigma')]},
\end{equation}
where 
\begin{equation*}
	{\bf H}_{G}^i(\sigma):=-\sum_{\{x,y\}\in E}\ind[\sigma_x=\sigma_y]- \,\sum_{x\in \partial V}\ind[\sigma_x=i]. 
\end{equation*}
Above, $\ind[\cdot]$ denotes the indicator function and $\partial V$ is the set of vertices of $G$ with at least one neighbour (in $\bbZ^2$) outside of $G$. Note that when $i =0$, the second sum is zero for all $\sigma$.

For any boundary conditions $i$, the family of measures $\mu_{G,\beta}^i$ converges as $G$ tends to the whole square lattice. The resulting measure $\mu_\beta^i$ defined on the square lattice is called the Gibbs measure with {\em free} boundary conditions if $i=0$ (respectively, {\em monochromatic} boundary conditions equal to $i$ if $i\in\{1,\dots,q\}$). 

The Potts model undergoes an order/disorder phase transition, meaning that there exists a {\em critical inverse temperature} $\beta_c=\beta_c(q)\in(0,\infty)$) such that:
\begin{itemize}[noitemsep,nolistsep]
	\item For $\beta<\beta_c$, the measures $\mu_\beta^i$, $i=0,\dots,q$, are all equal. 
	\item For $\beta>\beta_c$, the measures $\mu_\beta^i$, $i=0,\dots,q$, are all distinct. 
\end{itemize}
Baxter \cite{Bax78} conjectured that the phase transition is continuous if $q\le 4$ and discontinuous if $q>4$, meaning that all the measures $\mu^i_{\beta_c}$ with $i=0,\dots,q$ are equal if and only if $q\le 4$. 
It was shown in  \cite{BefDum12} that $\beta_c= \log(1+\sqrt q)$;
moreover, when $q \leq 4$, it was proved in \cite{DumSidTas16} that the phase transition is indeed continuous, along with more detailed properties of the unique critical measure~$\mu_{\beta_c}$. 
The goal of this article is to complete the proof of Baxter's conjecture by proving the following theorem. Below, $x_n$ denotes the site of $\bbZ^2$ with both coordinates equal to $\lfloor n/2\rfloor$.
\begin{theorem}\label{thm:Potts}
	Consider the $q$-state Potts model on the square lattice with $q>4$. Then, 
	\begin{enumerate}[nolistsep,itemsep = 2pt]
	\item all the measures $\mu^i_{\beta_c}$ for $i=0,\dots,q$ are distinct and ergodic 
	(in particular, $\mu_{\beta_c}^0$ is not equal to the average of the $\mu_{\beta_c}^i$ with $i\in\{1,\dots,q\}$);
	\item for any $i\in\{1,\dots,q\}$, $\mu_{\beta_c}^i[\sigma_0=i]>\tfrac1q$.
	\item Let $\lambda>0$ satisfy $\cosh(\lambda)=\sqrt q/2$. Then
	\begin{align*}
		\lim_{n\rightarrow\infty} -\tfrac1n\log\big(\mu_{\beta_c}^0[\sigma_0=\sigma_{x_n}]-\tfrac1q\big)=
		\lambda + 2 \sum_{k=1 }^\infty \tfrac{(-1)^k}k \tanh(k\lambda).
	\end{align*}
	Furthermore, the quantity above is strictly positive.
	\end{enumerate}
\end{theorem}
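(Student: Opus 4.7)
The plan is to reduce the three statements to the $q$-state random-cluster model (RCM) at the self-dual point $p_c=\sqrt q/(1+\sqrt q)$ via the Edwards--Sokal coupling. Under this correspondence, monochromatic Potts boundary conditions transfer to wired RCM boundary conditions, free to free, and one has the standard identities
\begin{align*}
\mu^0_{\beta_c}[\sigma_0 = \sigma_{x_n}] - \tfrac 1 q &= \big(1-\tfrac 1 q\big)\,\phi^0_{p_c,q}[0 \leftrightarrow x_n], \\
\mu^i_{\beta_c}[\sigma_0 = i] - \tfrac 1 q &= \big(1-\tfrac 1 q\big)\,\phi^1_{p_c,q}[0 \leftrightarrow \infty].
\end{align*}
Thus (3) is equivalent to computing the inverse correlation length of $\phi^0_{p_c,q}$, (2) reduces to $\phi^1_{p_c,q}[0\leftrightarrow\infty]>0$, and (1) follows once $\phi^0_{p_c,q}\neq\phi^1_{p_c,q}$ is established.

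The bulk of the work is part (3). I would express $\phi^0_{p_c,q}$ on a strip of width $n$ and large height via the Baxter--Kelland--Wu transformation as a partition function of the six-vertex model on the medial lattice. Its row-to-row transfer matrix $V$ commutes with the total spin operator $\bfS^z$, hence block-diagonalizes into sectors indexed by the number of down-arrows. Two-point connection probabilities in the RCM are encoded by the top eigenvalues $\Lambda_0(n)$ and $\Lambda_1(n)$ of the two leading sectors, and sending the height to infinity gives
\begin{equation*}
\lim_{n\to\infty}-\tfrac 1 n \log \phi^0_{p_c,q}[0\leftrightarrow x_n] \;=\; \lim_{n\to\infty}\log\!\frac{\Lambda_0(n)}{\Lambda_1(n)}.
\end{equation*}

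The main obstacle is the rigorous Bethe Ansatz diagonalization of $V$ in the regime $q>4$, corresponding to $\Delta=-\sqrt q/2<-1$ in XXZ language, i.e.\ the massive antiferromagnetic phase. My plan is to (a) verify, via algebraic Bethe Ansatz or the Baxter $Q$-operator formalism, that the ground state of each sector is given by an explicit family of Bethe vectors whose roots solve a system of Bethe equations and accumulate on a specific curve in $\mathbb{C}$; (b) identify the Perron--Frobenius eigenvectors using positivity of a suitably rescaled $V$; and (c) pass rigorously to the infinite-strip limit to obtain the convergent representation
\begin{equation*}
\lim_{n\to\infty}\log\!\frac{\Lambda_0(n)}{\Lambda_1(n)} \;=\; \lambda + 2\sum_{k=1}^\infty \frac{(-1)^k}{k}\tanh(k\lambda), \qquad \cosh\lambda = \tfrac{\sqrt q}{2}.
\end{equation*}
Positivity of this limit, which is equivalent to a strict spectral gap $\Lambda_0(n)>\Lambda_1(n)$ at the level of sectors of a strictly positive transfer matrix, would follow either from a direct estimate on the series or from the Perron--Frobenius theorem applied sector by sector.

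Parts (1) and (2) would then follow by standard random-cluster arguments. Exponential decay of $\phi^0_{p_c,q}[0\leftrightarrow x_n]$ rules out percolation under the free measure; planar duality at the self-dual point $p_c$ then forces the wired dual measure to percolate, yielding $\phi^1_{p_c,q}[0\leftrightarrow\infty]>0$ and hence the magnetization lower bound in~(2) via the identity above. Distinctness of $\mu^0_{\beta_c}$ from $\mu^i_{\beta_c}$ for $i\in\{1,\dots,q\}$, as well as pairwise distinctness of the monochromatic Potts measures, follows immediately from these two single-site statistics; ergodicity of each measure reduces via Edwards--Sokal to the well-known tail-triviality of the extremal RCM states $\phi^0_{p_c,q}$ and $\phi^1_{p_c,q}$, and the fact that $\mu^0_{\beta_c}$ is not the average of the $\mu^i_{\beta_c}$ is then immediate from the distinct (exponentially decaying vs.\ long-range-ordered) behaviour of their connectivity correlations.
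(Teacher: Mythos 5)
Your reduction of the Potts statement to the random-cluster model via the Edwards--Sokal coupling is exactly the paper's route, and the way you dispatch items (1) and (2) from exponential decay under $\phi^0$ plus self-duality, tail-triviality and the coupling, matches the paper's Section 3.4 almost verbatim. (A small aside: your identity $\mu^0[\sigma_0=\sigma_{x_n}]-\tfrac1q = (1-\tfrac1q)\phi^0[0\leftrightarrow x_n]$ carries the correct $(1-\tfrac1q)$ prefactor, which the paper silently drops; it is harmless for the inverse correlation length.)

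The genuine gap is in how you connect $\xi(q)^{-1}$ to the transfer-matrix spectral gap. You assert that two-point connection probabilities ``are encoded by'' $\Lambda_0$ and $\Lambda_1$ and that sending the strip height to infinity yields
$\xi^{-1}=\lim_n\log(\Lambda_0(n)/\Lambda_1(n))$.
This is exactly the step the paper identifies as the hard one and \emph{cannot prove directly} (see its Remark~3.12). The event $\{0\leftrightarrow x_n\}$ is not a product of local observables, so expressing it in the transfer-matrix formalism is not straightforward; the cluster geometry interacts with the arrow/loop representation in a subtle way. The paper sidesteps this by working on a torus rather than a strip: there the natural object in the loop/six-vertex dictionary is the number of non-retractable clusters, which pairs directly with the block decomposition of $V$ (number of up-arrows per row). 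From that they obtain an inequality relating $\xi^{-1}$ to $\log(\Lambda_0/\Lambda_1)$ in one direction (Lemma~3.10~(i) together with Corollary~3.8~(ii)), and then they are forced to invoke \emph{all} ratios $\Lambda_r/\Lambda_0$ for $r\geq 2$ and a squeeze argument via Lemma~3.10~(ii) and Corollary~3.8~(iii) to close the other direction. Your plan as written assumes the two-sided identity from the start, which is precisely what is missing.

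Separately, your choice of algebraic Bethe Ansatz / Baxter $Q$-operator is a different technical path from the paper's coordinate Bethe Ansatz. The paper explicitly avoids the algebraic machinery because in the massive antiferroelectric regime the completeness and non-vanishing of Bethe vectors, and the identification of the Perron--Frobenius sector, are hard to justify rigorously; instead it constructs solutions to the Bethe equations via the Implicit Function Theorem starting from $\Delta=-\infty$ (Theorem~2.4), tracks them analytically, and proves non-degeneracy of the Bethe vector by a Vandermonde determinant computation at $\Delta=-\infty$ (Corollary~3.5). If you pursue ABA or the $Q$-operator you will need an independent argument for these facts, and that is a substantial piece of work not addressed in the proposal.
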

The limit computed in the final item above is the inverse correlation length of the critical Potts model in the diagonal direction. This theorem follows directly from Theorem~\ref{thm:RCM} below via the standard coupling between the Potts and random-cluster models (see Section~\ref{sec:Potts} for details).

\begin{figure}
	\begin{center}
	\includegraphics[width=0.27\textwidth]{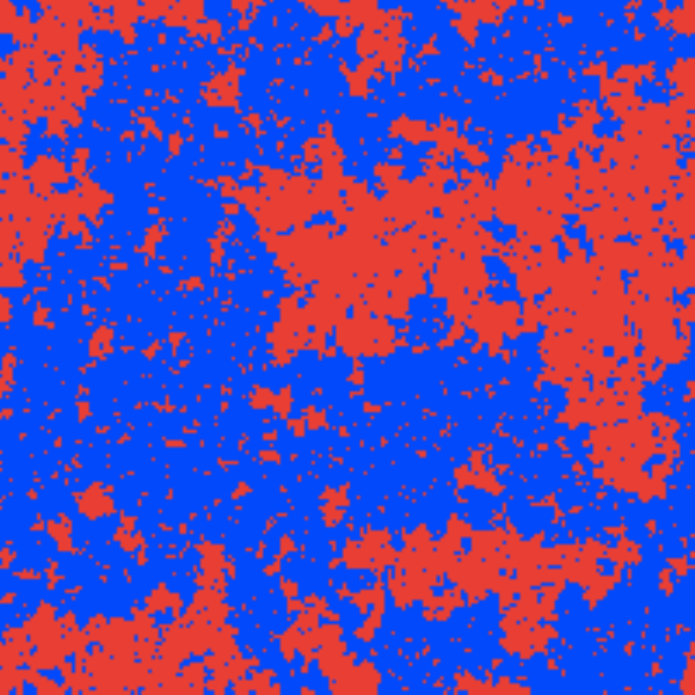}\quad
	\includegraphics[width=0.27\textwidth]{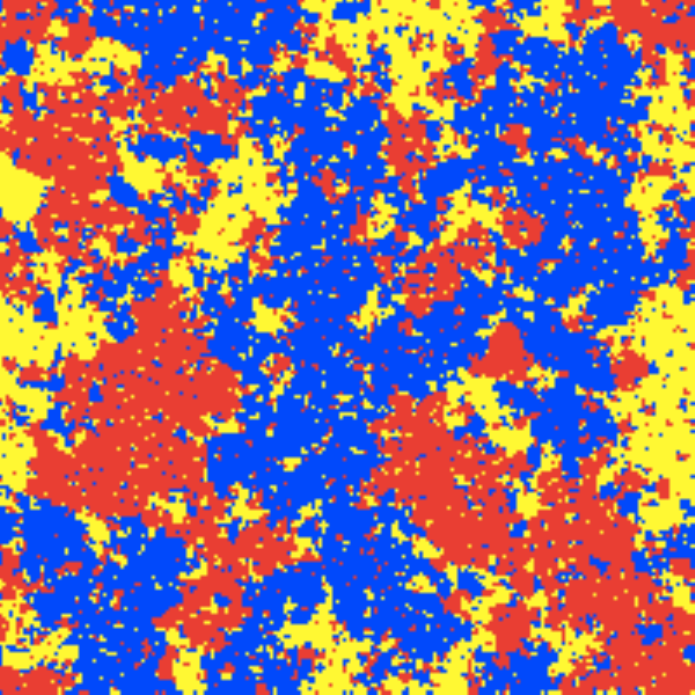}\quad
	\includegraphics[width=0.27\textwidth]{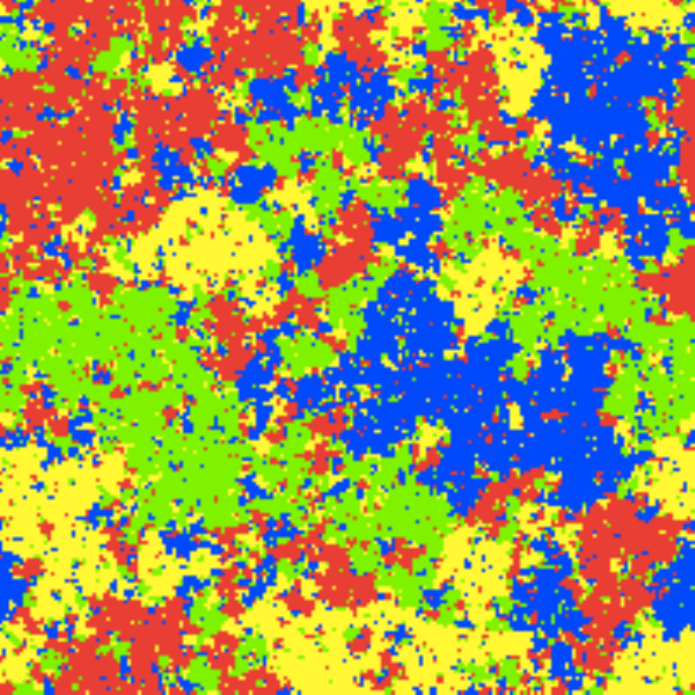}\vspace{6pt}
	
	\includegraphics[width=0.27\textwidth]{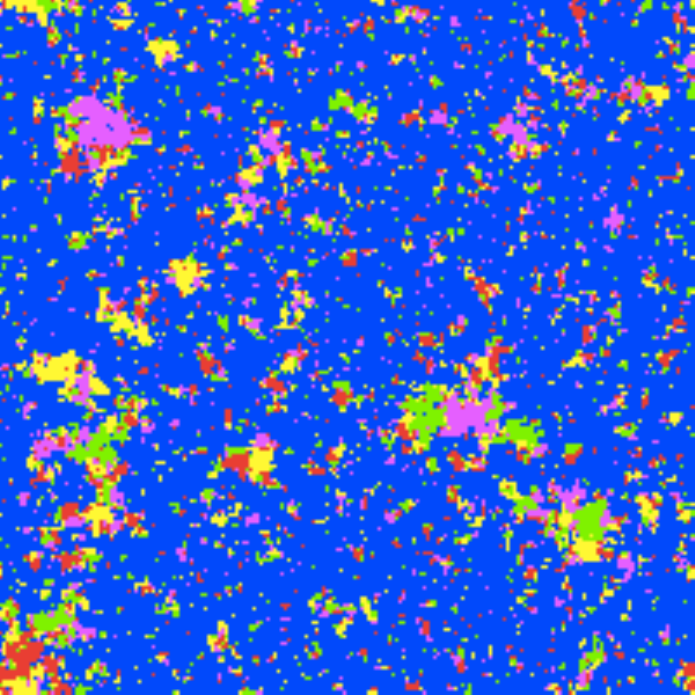}\quad
	\includegraphics[width=0.27\textwidth]{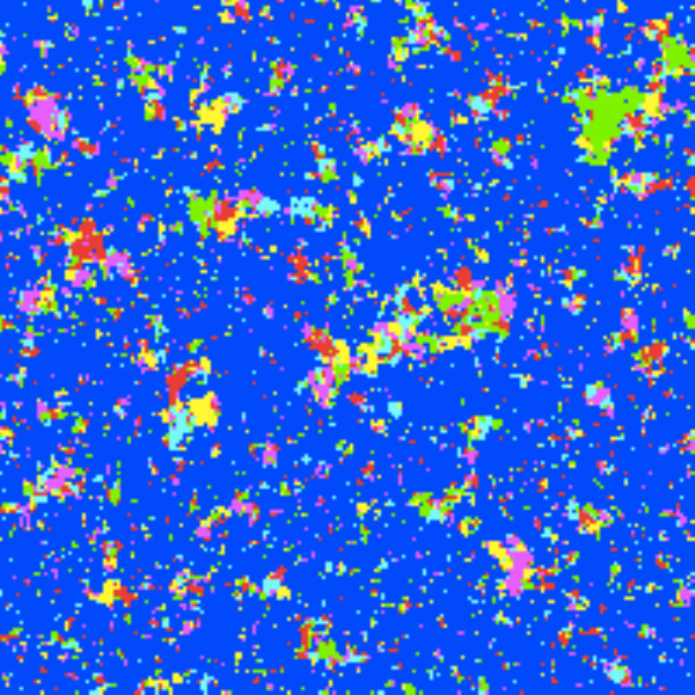}\quad
	\includegraphics[width=0.27\textwidth]{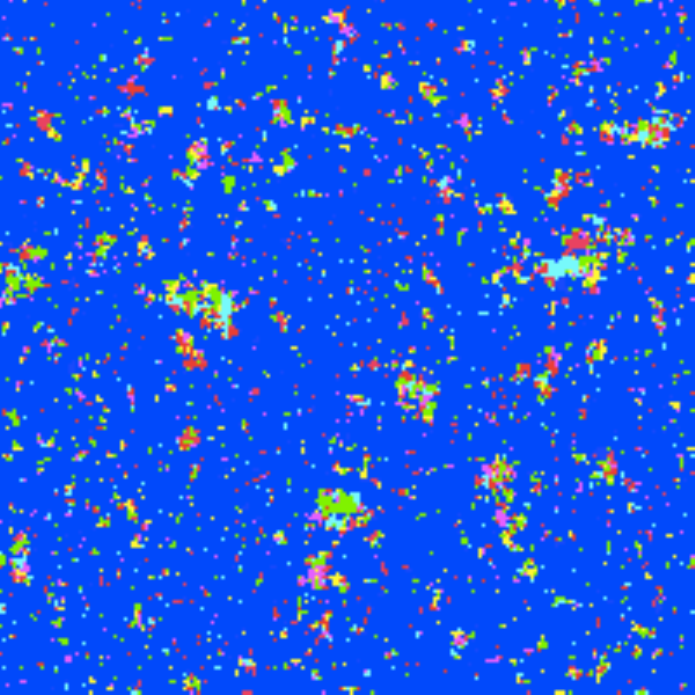}
	\caption{Simulations (courtesy of Vincent Beffara) of the critical planar Potts model $\mu_{\beta_c}^1$ (the spin 1 is depicted in blue) with $q$ equal to $2$, $3$, $4$, $5$, $6$ and $9$ respectively. The behaviour for $q\le 4$ is clearly different from the behaviour for $q>4$. In the first three pictures, each spin seems to play the same role, while in the last three, the blue spin dominates the other ones.}
	\end{center}
\end{figure}

\subsection{Results for the random-cluster model}

The random-cluster model (also called Fortuin-Kasteleyn percolation) was introduced by Fortuin and Kasteleyn around 1970 (see \cite{For70} and \cite{ForKas72}) as a class of models satisfying specific series and parallel laws. 
It is related to many other models of statistical mechanics, including the Potts model. 
For background on the random-cluster model and the results mentioned below, we direct the reader to the monographs \cite{Gri06} and \cite{Dum13}.

Consider a finite subgraph $G = (V,E)$ of the square lattice.
A percolation configuration $\omega$ is an element of $\{0,1\}^E$. An edge $e$ is said to be {\em open} (in $\omega$) if $\omega(e)=1$, otherwise it is {\em closed}. A configuration  $\omega$ can be seen as a subgraph of $G$ with vertex set $V$ and edge-set $\{e\in E:\omega(e)=1\}$. When speaking of connections in $\omega$, we view $\omega$ as a graph. 
 A {\em cluster} is a connected component of $\omega$ (it may just be an isolated vertex). 
Let $o(\omega)$ and $c(\omega)$ denote the number of open edges and closed edges in $\omega$ respectively. Let $k_0(\omega)$ denote the number of clusters of $\omega$, and $k_1(\omega)$ the number of clusters of $\om$ when all clusters intersecting $\partial V$ are counted as a single one -- as before, $\partial V$ is the set of vertices of $G$ adjacent to a vertex of $\bbZ^2$ not contained in $G$.

For $i\in\{0,1\}$, the random-cluster measure with parameters $p\in[0,1]$, $q>0$ and boundary conditions $i$  is given by
\begin{equation*}
\phi_{G,p,q}^i(\omega)=\frac{p^{o(\omega)}(1-p)^{c(\omega)}q^{k_i(\omega)}}{Z^i(G,p,q)},
\end{equation*}
where $Z^i(G,p,q)$ is a normalizing constant called the partition function. When $i=0$ and $i=1$, we speak of free and wired boundary conditions respectively.

The family of measures $\phi_{G,p,q}^i$ converges weakly as $G$ tends to the whole square lattice. The limiting measures are denoted by $\phi_{\bbZ^2,p,q}^i$ and are called {\em infinite-volume} random-cluster measures with free and wired boundary conditions (for $i$ equal to 0 and 1 respectively). 

For $q\ge1$, the random-cluster model undergoes a phase transition at the critical parameter $p_c=p_c(q)=\sqrt q/(1+\sqrt q)$ (see \cite{BefDum12} or \cite{DumMan14,DCRT16,DumRaoTas17} for alternative proofs), in the following sense:
\begin{itemize}[noitemsep,nolistsep]
\item if $p>p_c(q)$, $\phi_{\bbZ^2,p,q}^0=\phi_{\bbZ^2,p,q}^1$ and the probability of having an infinite cluster in $\omega$ is 1.
\item if $p<p_c(q)$, $\phi_{\bbZ^2,p,q}^0=\phi_{\bbZ^2,p,q}^1$ and the probability of having an infinite cluster in $\omega$ is 0.
\end{itemize}
As before, one may ask whether the phase transition is continuous or not; this  comes down to whether there exists a single critical measure or multiple ones. 
In \cite{DumSidTas16}, it was proved that for $1\le q\le 4$, $\phi_{\bbZ^2,p_c,q}^0=\phi_{\bbZ^2,p_c,q}^1$ and the probability of having an infinite cluster under this measure is 0. 
In this article, we complement this result by proving the following theorem.
Recall that, in this model, $q$ is not necessarily an integer. Also recall that $x_n$ is the site with both coordinates equal to $\lfloor n/2\rfloor$.

\begin{theorem}\label{thm:RCM}
	Consider the random-cluster model on the square lattice with $q>4$. Then
	\begin{enumerate}[nolistsep,itemsep = 2pt]
	\item $\phi_{\bbZ^2,p_c,q}^1\ne\phi_{\bbZ^2,p_c,q}^0$;
	\item $\phi_{\bbZ^2,p_c,q}^1[\text{there exists an infinite cluster}]=1$;
	\item if $\lambda>0$ satisfies $\cosh(\lambda)=\sqrt q/2$, then  
	\begin{equation}\label{eq:aaf}
	\lim_{n\rightarrow\infty} -\tfrac1n\log\phi_{\bbZ^2,p_c,q}^0[0\text{ and }x_n\text{ are in the same cluster}]=
	\lambda + 2 \sum_{k=1 }^\infty \tfrac{(-1)^k}k \tanh(k\lambda).
	\end{equation}
	Furthermore, the quantity on the right-hand side is positive and as $q\searrow 4$,
	\begin{equation}\label{eq:alternative}\lambda + 2 \sum_{k=1 }^\infty\tfrac{(-1)^k}k \tanh(k\lambda)=\sum_{k=0 }^\infty \frac{4}{(2k + 1)\sinh \left(\frac{\pi^2 (2k +1)}{2 \lambda} \right)}\sim 8\exp\left(-\frac{\pi^2}{\sqrt{q-4}}\right).\end{equation}
	\end{enumerate}
\end{theorem}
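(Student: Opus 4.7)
The plan is to route everything through the Baxter--Kelland--Wu (BKW) coupling between the critical random-cluster model and the six-vertex model with weights $a=b=1$, $c=\sqrt q$ (hence anisotropy parameter $\Delta = 1-q/2 < -1$ for $q>4$), and to extract the quantitative behaviour in~(3) from a rigorous Bethe-ansatz analysis of the row-to-row transfer matrix $V$ of the six-vertex model on a cylinder of width $N$. The operator $V$ commutes with the ``number of down arrows'' $S^z$, so it block-diagonalises; write $\Lambda_n(N)$ for the Perron--Frobenius eigenvalue of the block with $n$ down arrows. The first main analytic step is to show, using Bethe equations that admit an honest solution in the anti-ferroelectric regime, that
\begin{equation*}
 \lim_{N\to\infty}\tfrac{1}{N}\log\tfrac{\Lambda_1(N)}{\Lambda_0(N)} \;=\; -\bigl(\lambda + 2\sum_{k\ge 1}\tfrac{(-1)^k}{k}\tanh(k\lambda)\bigr),
\end{equation*}
with $\cosh(\lambda)=\sqrt q/2$, and to verify that the right-hand side is strictly negative (so there is a genuine mass gap). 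The technical difficulty here is not the formal manipulation of Bethe equations, which is classical, but rather proving that the candidate roots actually correspond to the true Perron--Frobenius eigenvalues of the relevant blocks and that the subleading corrections vanish in the $N\to\infty$ limit.

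The second step is to translate this spectral gap into the statement~(3) for $\phi^0$. Through the BKW correspondence, the partition function of the free random-cluster model on a $N\times M$ cylinder is, up to explicit boundary factors, $\Tr(V^M)$ restricted to the invariant sector containing the free-boundary reference state, and similarly, the connection probability $\phi^0[0\lra x_n]$ on an infinite cylinder can be written as a ratio of matrix elements of $V^n$ between vectors lying in different sectors. Taking $M\to\infty$ projects onto the leading eigenvector in each sector, so the exponential rate of decay is exactly $\log(\Lambda_0/\Lambda_1)$. Letting $N\to\infty$ and comparing cylinder and plane (using finite-energy and the results of \cite{BefDum12,DumSidTas16} to transfer exponential decay from the cylinder to $\bbZ^2$) gives~\eqref{eq:aaf}.

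Parts~(1) and~(2) then follow by planar duality. Since $p_c(q)$ is self-dual, the dual of $\phi^0_{\bbZ^2,p_c,q}$ is $\phi^1_{\bbZ^2,p_c,q}$; exponential decay of connections under $\phi^0$ forces, by the planar dichotomy between primal and dual clusters, an infinite dual cluster, i.e.\ an infinite primal cluster under $\phi^1$, giving~(2). In particular $\phi^1\ne\phi^0$, proving~(1).

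Finally, the asymptotics in~\eqref{eq:alternative} is an exercise in Jacobi theta-type identities: rewrite $\lambda+2\sum_{k\ge1}(-1)^k k^{-1}\tanh(k\lambda)$ as a Lambert-type series, apply the modular transformation $\lambda\mapsto\pi^2/(2\lambda)$ to obtain the dual representation $\sum_{k\ge 0} 4/[(2k+1)\sinh(\pi^2(2k+1)/(2\lambda))]$, and expand the $k=0$ term using $\lambda\sim\sqrt{q-4}$ as $q\searrow 4$. The main obstacle in the whole argument is the first step: making the Bethe-ansatz computation of $\Lambda_0$ and $\Lambda_1$ fully rigorous (existence, uniqueness, and Perron--Frobenius character of the candidate solutions) and controlling the relation between cylindrical and planar correlation lengths with sufficient precision to match the constants in~\eqref{eq:aaf}.
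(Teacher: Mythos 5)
Your overall architecture coincides with the paper's: couple the critical random-cluster model to an antiferroelectric six-vertex model, compute transfer-matrix eigenvalue ratios by a rigorous Bethe ansatz, transfer the gap to the random-cluster correlation length, get (1)--(2) from $\xi(q)<\infty$ via duality/\cite{DumSidTas16}, and prove \eqref{eq:alternative} by a modular-type identity (the paper does this by a contour-integral/residue computation, but your theta-function route is an acceptable variant). However, two concrete points go wrong. First, the parameter dictionary: the BKW correspondence for the critical random-cluster model gives $a=b=1$ and $c=\sqrt{2+\sqrt q}$, i.e.\ $\Delta=(2-c^2)/2=-\sqrt q/2$, not $c=\sqrt q$, $\Delta=1-q/2$. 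With your choice one gets $\cosh\lambda=q/2-1$, which does not match the constant in \eqref{eq:aaf} except at $q=4$. Relatedly, the mass gap is $\lim_N \log\bigl(\Lambda_0(N)/\Lambda_1(N)\bigr)$ (the ratio itself converges, cf.~\eqref{eq:aggg}); with your extra $\tfrac1N$ the limit is $0$.

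The more serious gap is the step ``$\phi^0[0\leftrightarrow x_n]$ on the cylinder is a ratio of matrix elements of $V^n$ between vectors in different sectors, so the decay rate is exactly $\log(\Lambda_0/\Lambda_1)$.'' No such identity is available: the random-cluster/six-vertex correspondence is an identity of weights summed over configurations and carries the non-local factors $(2/\sqrt q)^{\ell_0(\om)}q^{-s(\om)}$ coming from non-contractible loops and nets; it does not express two-point functions as six-vertex matrix elements. The paper instead (i) sandwiches the probability of vertically winding clusters on the torus between sector partition functions $Z^{(r)}_{6V}$ (Corollary~\ref{cor:ZFK_vs_Z6v}), (ii) shows the non-local weights are exponentially negligible (Lemma~\ref{lem:no_loop_density}), and (iii) relates winding probabilities to $\xi(q)$ only through the one-sided inequalities \eqref{eq:cor_length1}--\eqref{eq:cor_length2}, proved by RSW-type crossing bounds, FKG gluing, comparison of boundary conditions and finite energy. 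Crucially, the matching upper bound for the single-winding event (which is what your plan implicitly assumes) is exactly what the authors state they could not prove within the random-cluster model (Remark~\ref{rmk:rr}); they circumvent it by controlling $\Lambda_r(N)/\Lambda_0(N)$ for \emph{all} $r$ and letting $r\to\infty$. A proof that uses only $\Lambda_1/\Lambda_0$ together with an unproved matrix-element representation therefore has a hole at precisely the step where the probabilistic work is needed; to repair it you must either prove such a representation (nontrivial, and not done in the paper) or adopt the $r\to\infty$ sandwich, which requires the offset analysis behind \eqref{eq:aggg} for general $r$.
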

As in the Potts model, the quantity on the left-hand side of~\eqref{eq:aaf} corresponds to the inverse correlation length in the diagonal direction. Note that it directly implies exponential tails for the radius of the cluster.

The proof of this theorem relies on the connection between the random-cluster model and the six-vertex model defined below. At the level of partition functions, this connection was made explicit by Temperley and Lieb in \cite{TempLieb71}. Here, we will further explore the connection to derive the inverse correlation length; see Section~\ref{sec:RCM} for more details. 

\subsection{Results for the six-vertex model}

The six-vertex model was initially proposed by Pauling in 1931 in order to study the thermodynamic properties of ice.
While we are mainly interested in it for its connection to the previously discussed models, 
the six-vertex model is a major object of study on its own right.
We do not attempt to give an overview of the six-vertex model here; instead, we refer to \cite{Resh10} and Chapter~8 of \cite{Bax89} (and references therein) for a bibliography on the subject 
and to the companion paper \cite{BetheAnsatz1} for details specifically used below. 
  
Fix two even numbers $N$ and $M$, and consider the torus $\mathbb{T}_{N,M}:=\mathbb Z/N\mathbb Z\times\mathbb Z/M\mathbb Z$ as a graph with edge-set denoted $E(\bbT_{N,M})$. 
An {\em arrow configuration} $\vec{\omega}$ is a map attributing to each edge $e=\{x,y\}\in E(\mathbb{T}_{N,M})$ one of the two oriented edges $(x,y)$ and $(y,x)$. We say that an arrow configuration satisfies the {\em ice rule} if each vertex of $\mathbb T_{N,M}$ is incident to two edges pointing towards it (and therefore to two edges pointing outwards from it). The ice rule leaves six possible configurations at each vertex, depicted in Fig.~\ref{fig:the_six_vertices}, whence the name of the model. 
Each arrow configuration $\vec\omega$ receives a weight
\begin{align}\label{eq:w_6V}
	w(\vec{\omega}) := 
	\begin{cases}
	a^{n_1 + n_2} \cdot b^{n_3 + n_4} \cdot c^{n_5 + n_6}&\text{ if }\vec{\omega}\text{ satisfies the ice rule,}
	\\ \qquad\qquad0&\text{ otherwise},
	\end{cases}
\end{align}
where $a,b,c$ are three positive numbers, and $n_i$ denotes the number of vertices with configuration $i\in\{1,\dots,6\}$ in $\vec\omega$. In this article, we will focus on the case $a=b=1$ and $c>2$, and will therefore only consider such weights from now on. 
This choice of parameters is such that the six-vertex model is related to the critical random-cluster model with cluster weight $q>4$ on a tilted square lattice, as explained in Section~\ref{sec:RCM}.

Our choice of parameters corresponds to $\Delta := \frac{a^2+b^2 - c^2}{2ab} < -1$, called the anti-ferroelectric phase. The regime $\Delta \in [-1,1)$, also called disordered, is also of interest and is related to the random-cluster model with $q\le 4$; see \cite{Bax89}.
The regime $\Delta > 1$ (which requires $a \neq b$), called the ferroelectric phase, has also been studied under the name of stochastic six-vertex model and is related to interacting particle systems and random-matrix theory; see the recent paper \cite{BorCorGor16} and references therein.

\begin{figure}[htb]
	\begin{center}
		\includegraphics[width=0.6\textwidth, page=1]{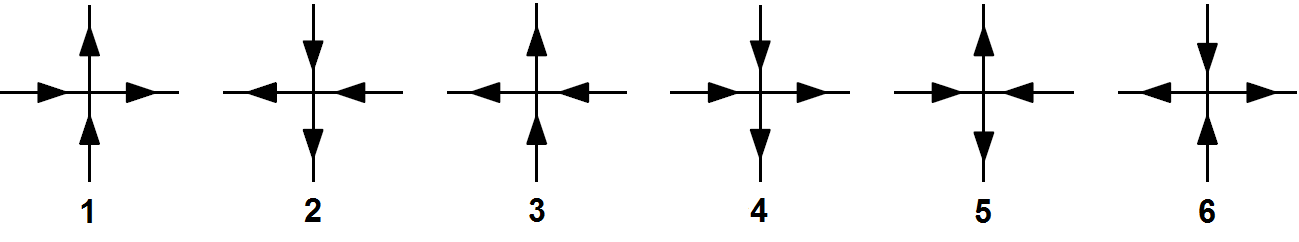}
	\end{center}
	\caption{The $6$ possibilities for vertices in the six-vertex model.
	Each possibility comes with a weight $a$, $b$ or $c$.}
	\label{fig:the_six_vertices}
\end{figure}

In the context of this paper, the utility of the six-vertex model stems from its solvability using the transfer-matrix formalism. More precisely, the partition function of a toroidal six-vertex model may be expressed as the trace of the $M$-th power of a matrix $V$ (depending on $N$) called the {\em transfer matrix}, which we define next.
For more details, see \cite{BetheAnsatz1}.

Set $\vec x=(x_1,\dots,x_n)$ to be a set of ordered integers (called {\em entries}) $1\le x_1<\dots<x_n\le N$ with $0\le n\le N$.
Let $\Omega = \{-1, 1\}^{\otimes N}$ be the $2^N$-dimensional real vector space spanned by the vectors $\Psi_{\vec x} \in \{\pm 1\}^{N}$ given by $\Psi_{\vec x}(i) = 1$ if $i\in\{x_1,\dots,x_n\}$, and $-1$ otherwise. 
The matrix $V$ is defined by the formula
\begin{equation}\label{eq:V}
	V (\Psi_{\vec{x}}, \Psi_{\vec{y}}) = 
	\begin{cases}
		2 & \text{ if }\Psi_{\vec{x}} = \Psi_{\vec{y}},\\ 
		c^{ | \{ i :\, \Psi_{\vec{x}}(i) \neq \Psi_{\vec{y}}(i)\}|} 
		& \text{ if } \Psi_{\vec{x}} \neq \Psi_{\vec{y}} \text{ and } \Psi_{\vec{x}} \text{ and }  \Psi_{\vec{y}} \, \text{ are interlaced},\\ 
		0 & \text{ otherwise},
	\end{cases}
\end{equation}
where $\vec x$ and $\vec y$ are {\em interlaced} if they have the same numbers of entries $n$ and $x_1 \leq y_1 \leq x_2\le \dots \leq x_n \leq y_n$ or $y_1 \leq x_1 \le y_2\leq \dots \leq y_n \leq x_n$.
It is immediate that $V$ is a symmetric matrix; in particular, all its eigenvalues are real. Furthermore, it is
made up of diagonal-blocks $V^{[n]}$ corresponding to its action on the vector spaces
$$\Omega_n:={\rm Vect}(\Psi_{\vec x}:\vec x\text{ has $n$ entries}) \qquad 0\leq n\leq N.$$ 
As discussed in \cite{BetheAnsatz1}, each block $V^{[n]}$ satisfies the assumption of the Perron-Frobenius theorem% 
\footnote{More precisely, the entries of $V^{[n]}$ are non-negative and there exists an integer $k$ such that $\big(V^{[n]}\big)^k$ has only positive entries. We will henceforth call a matrix with these properties a Perron-Frobenius matrix.}, 
and thus has one dominant, positive, simple eigenvalue. 
For an integer $0 \leq r \leq N/2$, let $\Lambda_r(N)$ be the Perron-Frobenius eigenvalue of the block $V^{[N/2-r]}$, where we emphasize the dependence of $\Lambda_r$ on $N$ (recall that $N$ is even). The main result dealing with the six-vertex model is the following asymptotic for the aforementioned eigenvalues. 

\begin{theorem}\label{thm:6V}
	For $c>2$ and $r > 0$ integer, fix $\lambda>0$ to satisfy $\cosh(\lambda)=\frac{c^2-2}2$. Then,
	\begin{align}
	\lim_{\substack{N\rightarrow\infty\\ N \in 4\bbN}}\frac1N\log \Lambda_0(N)&=\frac{\lambda}{2} + \sum_{k=1}^\infty \frac{ e^{- k \lambda} \tanh(k\lambda)}{k}\label{eq:aggf} \\
	\lim_{\substack{N\rightarrow\infty\\ N \in 4\bbN}}\frac{\Lambda_r(N)}{\Lambda_0(N)}&=\exp\Big[-r\,\Big(\lambda + 2 \sum_{k=1 }^\infty \tfrac{(-1)^k}k \tanh (k\lambda)\Big)\Big]\label{eq:aggg}.
	\end{align}
\end{theorem}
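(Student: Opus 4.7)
The plan is to apply the Bethe ansatz diagonalization of the blocks $V^{[n]}$ carried out in the companion paper \cite{BetheAnsatz1}. Each eigenvector of $V^{[n]}$ is parametrized by a tuple $\{z_1,\dots,z_n\}$ of ``Bethe roots'' satisfying a coupled system of polynomial (Bethe) equations, with an explicit closed-form eigenvalue. In the anti-ferroelectric regime $c>2$, equivalently $\Delta=1-c^2/2<-1$, one expects the Perron--Frobenius eigenvalue $\Lambda_r(N)$ of $V^{[N/2-r]}$ to arise from a distinguished ``ground-state'' solution in which, after a change of variable $z=z(\alpha)$ to a rapidity $\alpha\in\bbR$, all $N/2-r$ roots are real, simple, and roughly evenly distributed over the whole line.

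The first step is thermodynamic: show that as $N\to\infty$ the rescaled ground-state roots converge to a density $\rho$ on $\bbR$ satisfying a linear convolution integral equation
\begin{equation*}
\rho(\alpha)+\int_{\bbR}K_\lambda(\alpha-\beta)\,\rho(\beta)\,d\beta=p_0(\alpha),
\end{equation*}
where $K_\lambda$ and $p_0$ are explicit trigonometric kernels depending only on $\lambda$ (fixed by $\cosh\lambda=(c^2-2)/2$). This is a Lieb-type equation; because its support is all of $\bbR$, it is solved exactly by Fourier transform, and the Fourier coefficients are rational functions of $\sinh(k\lambda)$ and $\cosh(k\lambda)$, which is the source of the $\tanh(k\lambda)$ factors in the statement. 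Substituting $\rho$ into the Bethe-ansatz expression for $\tfrac1N\log\Lambda_0(N)$ and interchanging sum and integral yields \eqref{eq:aggf}.

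For \eqref{eq:aggg}, I would compare the ground states of $V^{[N/2]}$ and $V^{[N/2-r]}$. Removing $r$ roots while keeping the others at the allowed half-integer quantum numbers creates $r$ ``holes'' at the edges of the Fermi sea, and the large-$N$ factorization of the Bethe eigenvalue shows that each hole multiplies the eigenvalue by $\exp(-\varepsilon(\alpha_F))$, where $\varepsilon$ is the \emph{dressed} excitation energy and $\alpha_F$ the Fermi rapidity; in the anti-ferroelectric phase $\alpha_F$ lies at infinity. The function $\varepsilon$ satisfies its own linear integral equation of the same convolution type, and the same Fourier inversion gives the closed form $\varepsilon(\infty)=\lambda+2\sum_{k\ge1}\tfrac{(-1)^k}{k}\tanh(k\lambda)$, exactly matching the exponent in \eqref{eq:aggg}.

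The main obstacle is not the Fourier computation but the rigorous control of the Bethe roots. One must prove, for every admissible $N$, that: (i) a ground-state Bethe solution exists and is unique; (ii) its roots are real and simple and converge to $\rho$ with errors summable at the required rate; and (iii) the resulting eigenvalue is genuinely Perron--Frobenius, i.e.\ strictly larger than every other Bethe eigenvalue of the same block. These structural statements are the content of \cite{BetheAnsatz1}. Granted them, Theorem~\ref{thm:6V} reduces to the Fourier/Wiener--Hopf analysis sketched above, with the delicate step being uniform control on $\rho$ near the edges of the Fermi sea, which is what makes the $r$-hole expansion leading to \eqref{eq:aggg} rigorous.
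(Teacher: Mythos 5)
There is a genuine gap, and it lies exactly where you push the difficulty away. You write that the existence and uniqueness of the ground-state Bethe solution, the reality/simplicity and convergence of the roots, and the identification of the resulting eigenvalue as the Perron--Frobenius eigenvalue ``are the content of \cite{BetheAnsatz1}.'' They are not: the companion paper only supplies the algebraic Bethe Ansatz statement (Theorem~\ref{thm:BA} here), namely that \emph{if} $(p_1,\dots,p_n)$ solves the Bethe equations then an explicit vector $\psi$ satisfies $V\psi=\Lambda\psi$ --- with no guarantee that solutions exist, that $\psi\neq 0$, or that $\Lambda$ dominates the spectrum of the block. Establishing precisely these facts is the bulk of the actual proof: solutions are constructed by continuation in $\De$ from $\De=-\infty$ via the Implicit Function Theorem, kept close to the continuous density $\rho$ in a tailored norm (Theorem~\ref{thm:1}, Lemmas~\ref{lem:1}--\ref{lem:2}), and the Perron--Frobenius identification is obtained by analyticity in $\De$ together with an explicit diagonalization of the $\De\to-\infty$ limit of the block through a non-intersecting path count (Lemma~\ref{lem:PFAtInfinity}, Corollary~\ref{cor:a}). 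Without an argument for these three points your proof of \eqref{eq:aggf} does not start; citing \cite{BetheAnsatz1} for them is a misattribution, and the paper itself stresses that this is where earlier derivations (Yang--Yang, Baxter) are incomplete. A smaller but telling slip: in the anti-ferroelectric regime $\De<-1$ the momenta fill the compact interval $[-\pi,\pi]$ and the convolution equation is solved by Fourier \emph{series}; your picture of roots spread over all of $\bbR$ with a Fermi rapidity ``at infinity'' belongs to the disordered regime $|\De|\le 1$.

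For \eqref{eq:aggg} your route --- $r$ holes at the Fermi edge, each costing the dressed energy $\varepsilon$, itself obtained from a second linear integral equation --- is the classical physics heuristic, and it is exactly the kind of step the paper says is hard to make rigorous. The paper instead compares two \emph{exact finite-$N$} solutions, for $n=N/2$ and $n=N/2-r$, through the offset displacement $\varepsilon_j$ and shows (Theorem~\ref{thm:3}) that $\rho\cdot f_{\bp,\tilde\bp}$ converges to $r\tau$, where $\tau$ solves the Offset Equation~\eqref{eq:VarsigmaDef}; the ratio $\Lambda_r/\Lambda_0$ is then an explicit product whose bulk part converges to $\exp\big(r\int \ell'\tau\big)$ and whose edge roots contribute the $|\De|^{-r}$ factor, with a separate and genuinely more delicate treatment when $r$ is odd (a zero momentum, a different eigenvalue formula, and a Wallis-product correction). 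Your sketch acknowledges that ``uniform control near the edges'' is the delicate point but offers no mechanism for it, nor for the even/odd $r$ dichotomy; so even granting the thermodynamic limit of the density, the hole-expansion step remains unproved as stated.
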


The limit of $\frac{\Lambda_1(N)}{\Lambda_0(N)}$ is sometimes interpreted as twice the {\em surface tension} of the six-vertex model, and the second equation is effectively a computation of this quantity. The limit of $\frac{\Lambda_r(N)}{\Lambda_0(N)}$ does not have an immediate interpretation but will come in useful when transferring the result to the random-cluster model (see Remark~\ref{rmk:rr}). 
The first identity may be reformulated in terms of the free energy, 
which defines the asymptotic behaviour of the partition function, as described below. 

\begin{corollary}\label{cor:6V}
Fix $c>2$ and  $\lambda>0$ such that $\cosh(\lambda)=\frac{c^2-2}2$. Then the free-energy $f(1,1,c)$ of the six-vertex model satisfies
$$
f(1,1,c):=\lim_{\substack{N,M\rightarrow\infty}} \frac{1}{NM}\log\Big( \sum_{\vec\omega\, \, \textrm{on}\, \, \bbT_{N,M}}w(\vec\omega)\Big)=\frac{\lambda}{2} + \sum_{k=1}^\infty \frac{ e^{- k \lambda} \tanh(k\lambda)}{k} .
$$
\end{corollary}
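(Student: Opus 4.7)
The proof rests on the standard transfer-matrix identity $Z_{N,M} := \sum_{\vec\omega \text{ on }\bbT_{N,M}} w(\vec\omega) = \Tr(V^M)$ combined with the block decomposition
\[
\Tr(V^M) = \sum_{n=0}^N \Tr\bigl((V^{[n]})^M\bigr),
\]
where $V^{[n]}$ acts on $\Omega_n$ of dimension $\binom{N}{n}$. Since each $V^{[n]}$ is symmetric, non-negative, and Perron--Frobenius, its spectral radius $\lambda_n := \lambda_{PF}(V^{[n]})$ is a simple eigenvalue strictly dominating all others in absolute value, which gives the elementary two-sided bound $\lambda_n^M \le \Tr\bigl((V^{[n]})^M\bigr) \le \binom{N}{n}\lambda_n^M$.

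Next, the global spin flip $\Psi_{\vec x}\mapsto -\Psi_{\vec x}$ (i.e.\ swapping the roles of $\pm 1$) conjugates $V^{[n]}$ with $V^{[N-n]}$, whence $\lambda_n = \lambda_{N-n}$; setting $\lambda_{N/2-r} = \Lambda_r(N)$, it suffices to consider $0\le r\le N/2$. Letting $C := \lambda+2\sum_{k\ge 1}\tfrac{(-1)^k}{k}\tanh(k\lambda)$, which Theorem~\ref{thm:6V} asserts to be strictly positive, part~(2) of that theorem yields $\Lambda_r(N)/\Lambda_0(N)\to e^{-rC}<1$ for every fixed $r\ge 1$. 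The uniform-in-$r$ refinement $\max_{1\le r\le N/2}\Lambda_r(N)\le \Lambda_0(N)$ for all large $N\in 4\bbN$ follows from the Bethe-ansatz representation of the eigenvalues developed in the companion paper \cite{BetheAnsatz1}, whose leading asymptotics are monotonically decreasing in $r$. Summing the $(N+1)$ block contributions gives
\[
\Lambda_0(N)^M \;\le\; Z_{N,M} \;\le\; (N+1)\,2^N\,\Lambda_0(N)^M .
\]

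Taking logarithms, dividing by $NM$, and letting $M\to\infty$ first and then $N\to\infty$ along $4\bbN$, the error term $\tfrac{\log 2}{M}+\tfrac{\log(N+1)}{NM}$ vanishes, and Theorem~\ref{thm:6V}(1) identifies the limit with the value claimed in the corollary. Since the free energy $f(1,1,c)$ is known a priori to exist by a standard sub-additivity argument on the torus (modulo boundary corrections of order $N+M$, which are negligible after dividing by $NM$), the limit along this subsequence coincides with the full limit over $N,M\to\infty$. The principal obstacle is the passage from the pointwise statement in Theorem~\ref{thm:6V}(2) to the uniform dominance of $\Lambda_0(N)$ over all other $\Lambda_r(N)$, which is precisely where the quantitative Bethe-ansatz input from \cite{BetheAnsatz1} beyond the bare statement of Theorem~\ref{thm:6V} is required.
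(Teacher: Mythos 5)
Your decomposition $Z_{N,M}=\Tr(V^M)=\sum_{n}\Tr\big((V^{[n]})^M\big)$ is correct, but the argument breaks at the step where you assert $\max_{1\le r\le N/2}\Lambda_r(N)\le \Lambda_0(N)$ for all large $N$. Theorem~\ref{thm:6V}(2) is a statement about \emph{fixed} $r$ as $N\to\infty$; it gives no control uniform in $r$ over the full range $1\le r\le N/2$, and such a uniform bound is nowhere established in the paper or in the companion paper \cite{BetheAnsatz1}. The Bethe-ansatz solutions used to compute $\Lambda_r(N)$ are constructed only for fixed $r$ with $N$ sufficiently large depending on $r$ (see Theorem~\ref{thm:1}), so saying the dominance "follows from the Bethe-ansatz representation" is not a proof but a placeholder for the hardest missing step. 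This is precisely the obstacle the paper circumvents: rather than bounding all $N+1$ blocks at once, the paper introduces the \emph{balanced} partition function $Z_{N,M}^{\rm (bal)}$, which equals $\Tr\big[(V^{[N/2]})^M\big]$ and thus involves only the single block $V^{[N/2]}$, and then proves via the combinatorial boundary-condition manipulations of Lemma~\ref{lem:bc} that $Z_{N,M}$ and $Z_{N,M}^{\rm (bal)}$ have the same free energy. This reroutes the argument so that only $\Lambda_0(N)$ (and not the whole family $\Lambda_r(N)$) is needed.

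Your appeal to "a standard sub-additivity argument on the torus" for the existence of $f(1,1,c)$ is also unjustified here: as the paper remarks, the six-vertex model lacks the finite-energy property (a configuration satisfying the ice rule on $\bfR_{N,M}$ may fail to extend to one on $\bfR_{N',M'}$ with only $O(N+M)$ cost, and indeed some boundary conditions admit no compatible configuration at all), so the usual subadditivity recipe does not go through unmodified. The content of Lemma~\ref{lem:bc} is precisely the careful surgery on boundary conditions (reflections, arrow-reversals, and explicit filler configurations as in Fig.~\ref{fig:subadd}) needed to make subadditivity work. Without either the uniform eigenvalue bound or an analogue of Lemma~\ref{lem:bc}, the proposal does not close.
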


The previous corollary follows trivially from Theorem~\ref{thm:6V} once observed that the free energy does exist, and that the leading eigenvalue of $V$ is the Perron-Frobenius eigenvalue of $V^{[N/2]}$ (see Section~\ref{sec:6V} for details).

Theorem~\ref{thm:6V} above will be obtained by applying the coordinate Bethe Ansatz to the blocks $V^{[n]}$ of the transfer matrix. 
This ansatz, aimed at finding eigenvalues of certain types of matrices, was introduced by Bethe \cite{Bethe31} in 1931 for the Hamiltonian of the XXZ model. It has since been widely studied and developed, with applications in various circumstances, such as the one at hand. Its formulation for the six-vertex model is described in detail in \cite{BetheAnsatz1}. For completeness, let us briefly discuss this technique again. 

The idea is to try to express the eigenvalues of $V_N^{[n]}$ as explicit functions (see Theorem~\ref{thm:BA} below) of an $n$-uplet $\bp=(p_1,\dots,p_n)\in(-\pi,\pi)^n$ satisfying the $n$ equations
\begin{align}\label{eq:BE}\tag{BE$_\De$}
	N p_j = 2\pi I_j - \sum_{k=1}^n	\Theta (p_j,p_k) \qquad \forall j\in \{1,\dots,n\},
\end{align}
 where the $I_j$ are integers or half-integers (depending on whether $n$ is odd or even) between $-N/2$ and $N/2$, and  $\Theta : \mathbb{R}^2 \rightarrow \mathbb{R}$ is the unique continuous function\footnote{The fact that $\Theta $ is well-defined, real-valued and analytic can be checked easily.} satisfying $\Theta (0,0) = 0$ and
\begin{equation}\label{eq:Theta}
\exp(-i \Theta (x,y)) = e^{i(x-y)} \cdot \frac{ e^{-ix} + e^{iy} - 2 \Delta}{ e^{ix} + e^{-iy} - 2 \Delta} \, ,
\end{equation}
where recall that $\De=(2-c^2)/2$. This parameterization of the six-vertex model will be used throughout the paper. We refer to~\ref{eq:BE} as the Bethe equations.
Depending on the choice of the $I_j$, the eigenvalue obtained may be different. 
It is also {\em a priori} unclear whether all eigenvalues of $V_N^{[n]}$ can be obtained via this procedure. 

The asymptotic behaviour of $\Lambda_0(N)$ was computed in \cite{YangYang66} using the coordinate Bethe Ansatz. The argument of \cite{YangYang66} assumed that $\Lambda_0(N)$ is produced by a solution $\bp(N)=(p_1,\dots, p_{N/2})$ to~\eqref{eq:BE} with $n=N/2$ and the special choice $I_j=j-(n+1)/2$. An asymptotic analysis of the distribution of $p_1,\dots, p_{N/2}$ on $[-\pi,\pi]$ was then used to derive the asymptotic behaviour of $\Lambda_0(N)$. To our best understanding, certain gaps prevent this derivation from being completely justified in this first paper. Among them are the existence of solutions to~\eqref{eq:BE}, the fact that the associated eigenvector constructed by the Bethe Ansatz is non-zero, and the justification of the weak convergence of the point measure of $\bp$ to an explicit continuous distribution.

The more refined asymptotic~\eqref{eq:aggg} requires further justification. For $r=1$ (or equivalently~$-1$), the limit was derived in \cite{Bax89} and \cite{BufWal93}. Baxter's result \cite{Bax89} is based on computations involving a more sophisticated version of the Bethe Ansatz and the eight-vertex model, which generalizes the six-vertex model. The paper \cite{BufWal93} relies on completeness of the six-vertex and Potts representations of the Bethe Ansatz. To our best understanding, both computations require assumptions which are difficult to rigorously justify. We are not aware of any computation of~\eqref{eq:aggg} for $|r|\ge2$. Similar results were obtained rigorously in \cite{Koz15,Gol05} for related models (see the discussion before Theorem~\ref{thm:1} and Remark~\ref{rmk:gold} for more details).

In light of this, we chose to write a fully rigorous, self-contained derivation of both~\eqref{eq:aggf} (which matches Baxter's computation) and~\eqref{eq:aggg}. Moreover, we only use elementary tools, so as to render it accessible to a more diverse audience, less accustomed to the mathematical physics literature. The computations of the two limits in Theorem~\ref{thm:6V} will be used in a crucial way in the proof of Theorem~\ref{thm:RCM}.

\subsection{Organization of the paper}

\paragraph{Section 2: Study of the Bethe equations.}  
This step consists in the study of~\eqref{eq:BE} with the choice 
\begin{equation}
\label{eq:choice I}
I_j:=j-\frac{n+1}2 \qquad\text{for }j\in\{1,\dots,n\}.
\end{equation} 
This section does not involve any reference to the Bethe Ansatz or the six-vertex model. It is divided in three steps:
\begin{enumerate}
\item We first study two functional equations that we call the {\em continuous Bethe Equation} and the {\em continuous Offset Equation}, respectively, via Fourier analysis.
\item We then construct solutions to~\eqref{eq:BE} with prescribed properties. This approach proves the existence of solutions to the Bethe equations and, more importantly, provides good control of the increments $p_{j+1}-p_j$ of the solution. This will be crucial when analysing the asymptotic of $\Lambda_r(N)/\Lambda_0(N)$. It also provides tools for proving that the eigenvectors built via the Bethe Ansatz are non-zero and correspond to Perron-Frobenius eigenvalues.
\item Finally, we study the asymptotic behaviour of the solutions of the discrete Bethe equations using the continuous Bethe Equation. Furthermore, we compare solutions with different values of $n$ using the continuous Offset Equation.
\end{enumerate}

\paragraph{Section 3: From the Bethe equations to the different models.}  
This part contains the proofs of the main theorems. It is divided in two steps.
\begin{enumerate}
\item We use the Bethe Ansatz to relate the Bethe equations to the eigenvalues of the transfer matrix of the six-vertex model. We then study the asymptotic behaviour of the Perron-Frobenius eigenvalues of the different blocks of the transfer matrix using the asymptotic behaviour of the solutions to the continuous Bethe Equation derived in the previous section (see the proof of Theorem~\ref{thm:6V}). 
\item We relate the six-vertex model to the random-cluster and Potts models via classical couplings. These relations, together with new results on the random-cluster model, enable us to prove Theorems~\ref{thm:RCM} and~\ref{thm:Potts}.
\end{enumerate}

\paragraph{Section 4: Fourier computations.} The study will require certain computations using Fourier decompositions. While these computations are elementary, they may be lengthy, and would break the pace of the proofs. We therefore defer all of them to Section 4.

\paragraph{Notation.} Most functions hereafter depend on the parameter $\De =\frac{2-c^2}2 < -1$. For ease of notation, we will generally drop the dependency in $\De$, and recall it only when it is relevant. We write $\pd_i$ for the partial derivative in the $i^{th}$ coordinate.

\paragraph{Acknowledgements} The authors thank Vincent Beffara for the simulations, and Alexei Borodin for useful discussions. The first and the third authors were funded by the IDEX grant of Paris-Saclay. The fifth author was funded by a grant from the Swiss NSF. All the authors are partially funded by the NCCR SwissMap.

%We will use $M_f$ and $m_f$ for the maximum and minimum of $|f|$. When this function depends on $\De$, we mean the maximum and minimum in 

\section{Study of the Bethe Equation} 

\subsection{The continuous Bethe and Offset Equations}  

This section studies the following continuous functional equations for $\De<-1$:
\begin{align}\tag{cBE$_\De$}\label{eq:RhoDef}
2 \pi \rho(x) &= 1 + \displaystyle \int_{-\pi}^{\pi} \pd_1 \Theta (x,y) \rho(y) dy		&\qquad \forall x\in[-\pi,\pi],\\
\tag{cOE$_\De$}\label{eq:VarsigmaDef}
2 \pi \tau(x) &= \frac{\Theta(x,-\pi) + \Theta(x,\pi)}{2} - \int_{-\pi}^\pi \partial_2 \Theta(x,y) \tau(y) dy&\qquad \forall x\in[-\pi,\pi].
\end{align} 
The first equation naturally arises as a continuous version of the Bethe equations \eqref{eq:BE}, while the second one will be useful when studying the displacement between solutions of the Bethe equations for different values of $n$.

The main object of the section is the following proposition. For $\De<-1$, let 
$k$ be the unique continuous function\footnote{The existence of $k$ follows by taking the complex logarithm and fixing $k(\pm\pi)=\pm\pi$. Furthermore, $k$ is invertible. Also notice that $z\mapsto (e^\lambda-z)/(e^\lambda z-1)$ is a Mobi\"us transformation mapping the unit circle to itself and $-1$ to $-1$.}  
from  $[-\pi,\pi]$ to itself satisfying 
$$ e^{\icomp k(\alpha)} = \frac{{e^\lambda}-e^{-\icomp \alpha}}{e^{\lambda- \icomp \alpha}-1}, $$ 
where $\lambda>0$ is such that $\cosh(\lambda)=-\De$.

\begin{proposition}\label{prop:rhoProp}
	For $\De<-1$, let $x = k(\alpha)$. The functions $\rho $ and $\tau $ defined by\footnote{In the definition of $\tau$, the series is not absolutely summable; it stands for the limit of the partial sums.}
	\begin{align}\label{eq:rho_explicit}
	\rho (x)&:=\frac1{4 \la k'(\alpha)}\sum_{j \in \mathbb{Z}} \frac{1}{\cosh[\pi (2 \pi j +\alpha)/(2 \lambda)]},\\
	\tau (x) &:= \sum_{m>0}\frac{(-1)^{m}}{\pi m}\tanh(\lambda m)\sin(m\alpha),
	\nonumber
	\end{align}
	are the only solutions in $L^2([-\pi,\pi])$ of~\eqref{eq:RhoDef} and~\eqref{eq:VarsigmaDef} respectively. 
	In particular, the function $\rho:(\De,x)\mapsto \rho (x)$ is strictly positive and analytic in $\De<-1$ and $x\in[-\pi,\pi]$.
\end{proposition}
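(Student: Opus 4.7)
The plan is to perform the change of variable $x=k(\alpha)$, $y=k(\beta)$, which turns $\Theta$ into a translation-invariant kernel, and then solve the resulting convolution equations on the circle via Fourier series. The defining formula for $k$ is chosen precisely so that $\Theta(k(\alpha),k(\beta))$ depends only on $\alpha-\beta$: inverting $k$ to write $e^{\icomp x}$ as a M\"obius transformation of $e^{\icomp\alpha}$, substituting into \eqref{eq:Theta}, and using $-2\Delta=e^\lambda+e^{-\lambda}$, a short algebraic computation yields
\[
e^{-\icomp\Theta(k(\alpha),k(\beta))} \;=\; e^{\icomp(\alpha-\beta)}\,\frac{e^{2\lambda}-e^{-\icomp(\alpha-\beta)}}{e^{2\lambda}-e^{\icomp(\alpha-\beta)}},
\]
so that $\Theta(k(\alpha),k(\beta))=\vartheta(\alpha-\beta)$ for an explicit analytic odd function $\vartheta$ on $[-2\pi,2\pi]$.

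Setting $\tilde\rho(\alpha):=\rho(k(\alpha))k'(\alpha)$ and $\tilde\tau(\alpha):=\tau(k(\alpha))$, and using $k(\pm\pi)=\pm\pi$, equations \eqref{eq:RhoDef} and \eqref{eq:VarsigmaDef} become convolution equations on $\mathbb{R}/2\pi\mathbb{Z}$:
\[
2\pi\tilde\rho \;=\; k' + \vartheta'*\tilde\rho, \qquad 2\pi\tilde\tau \;=\; \tfrac12\bigl[\vartheta(\cdot-\pi)+\vartheta(\cdot+\pi)\bigr] + \vartheta'*\tilde\tau.
\]
Passing to Fourier series with $\hat f(m):=\tfrac{1}{2\pi}\int_{-\pi}^{\pi}f(\alpha)e^{-\icomp m\alpha}\,d\alpha$ diagonalizes each equation mode by mode. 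A direct computation from the closed form of $\vartheta$ gives $\widehat{\vartheta'}(m)$ in closed form in terms of $e^{-\lambda |m|}$; crucially $1-\widehat{\vartheta'}(m)\neq 0$ for every $m\in\mathbb{Z}$, which simultaneously yields existence and $L^2$-uniqueness of $\tilde\rho$ and $\tilde\tau$.

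It then remains to match the resulting Fourier series with the formulas in the statement. The series for $\tilde\tau$ is already in Fourier form, so identifying coefficients is a single line. For $\tilde\rho$, the Fourier coefficients come out to be $\tfrac{1}{4\pi\cosh(\lambda m)}$; by Poisson summation these are exactly the coefficients of the periodization of $\tfrac{1}{2\lambda}\operatorname{sech}(\pi\alpha/(2\lambda))$, which, after unwinding the definition of $\tilde\rho$, is precisely \eqref{eq:rho_explicit}. Strict positivity of $\rho$ is then immediate termwise (using $k'>0$ on $[-\pi,\pi]$, read off from the defining formula for $k$), and joint analyticity in $(\Delta,x)$ for $\Delta<-1$ follows from the analytic dependence of $\lambda$ on $\Delta$, analyticity of $k$, and super-exponential decay of the summands (uniform on compacts). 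I expect the main technical step to be the algebraic verification in the first paragraph that the change of variable actually produces a translation-invariant kernel; once this is established, the remainder is standard Fourier analysis.
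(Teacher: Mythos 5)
Your proposal is correct and takes essentially the same route as the paper's proof: the paper likewise performs the change of variable $x=k(\alpha)$, observes that $\tfrac{d}{d\alpha}\Theta(k(\alpha),k(\beta))=-\Xi_{2\lambda}(\alpha-\beta)$ (your $\vartheta'$) and $k'=\Xi_\lambda$, diagonalizes the resulting convolution equations by Fourier series using $\hat\Xi_\mu(m)=e^{-\mu|m|}$, and identifies the explicit form of $\rho$ by Poisson summation. The only difference is presentational — the paper names the kernel $\Xi_{2\lambda}$ up front and defers the residue and Poisson computations to its Section 4.
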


We prove this result by making a change of variables $x=k(\alpha)$ to obtain equations involving a convolution operator, and then using Fourier analysis to compute $\rho$ and $\tau$ (and therefore deduce their uniqueness). 

\begin{proof} In this proof, we fix $\De<-1$ and drop it from the notation. 
Set $R(\alpha)=2\pi\rho(k(\alpha))k'(\alpha)$ and $T(\alpha)=2\pi\tau(k(\alpha))$.
The change of variables $x=k(\alpha)$ transforms~\eqref{eq:RhoDef} and~\eqref{eq:VarsigmaDef} into\footnote{A series of algebraic manipulations is necessary for this step. The key is to observe that 
	$k'(\alpha) = \Xi_\la(\alpha)$ and 
	%$\Xi_{2 \lambda}(\alpha - \beta) = - k'(\alpha)\pd_1\Theta(k(\alpha),k(\beta)) = k'(\beta)\pd_2\Theta(k(\alpha),k(\beta))$.
	$\Xi_{2 \lambda}(\alpha - \beta) = - \frac{d}{d \alpha} \Theta(k(\alpha),k(\beta)) =  \frac{d}{d \beta}\Theta(k(\alpha),k(\beta))$.
}
\begin{align}
	\tag{cBE$'_\De$}\label{eq:RhoDef'}
	R(\alpha) &=  \Xi_{\lambda}(\alpha)  - \frac{1}{2\pi} \displaystyle \int_{-\pi}^\pi\Xi_{2 \lambda}(\alpha - \beta) R(\beta) d \beta
	\qquad &\forall \alpha \in [-\pi,\pi],
	\\ 
	\tag{cOE$'_\De$}
	T(\alpha) &=  \Psi(\alpha)  - \frac{1}{2\pi} \displaystyle \int_{-\pi}^\pi\Xi_{2 \lambda}(\alpha - \beta) T(\beta) d \beta
	\qquad &\forall \alpha \in [-\pi,\pi],
	\label{eq:VarsigmaDef'}
\end{align}
where, for $\mu \in \bbR$ and $\alpha \in [-\pi,\pi]$, 
$$
	\Xi_{\mu}(\alpha):=\frac{\sinh (\mu)}{\cosh(\mu)- \cos(\alpha)}\qquad\text{ and }\qquad 
	\Psi(\alpha):=\frac{\Theta(k(\alpha),-\pi)+\Theta(k(\alpha),\pi)}2.
$$
For any function $f \in L^2([-\pi, \pi])$, denote by $(\hat{f}(m))_{m \in \bbZ}$ its Fourier coefficients 
defined as $\hat f(m) := \frac1{2\pi}\int_{-\pi}^\pi e^{-i m\alpha}f(\alpha)d\alpha$. 
%such that 
%$f(\alpha) = \displaystyle \sum_{m\in\bbZ} \hat{f}(m)\exp(i  m \alpha)$. 
Then, \eqref{eq:RhoDef'} and \eqref{eq:VarsigmaDef'} may be rewritten as 
\begin{align}\label{eq:rfourier}
	\hat{R}(m) &= \hat{\Xi}_\lambda(m) - \hat{\Xi}_{2\lambda}(m) \hat{R}(m)\qquad\text{ and }\qquad
	\hat{T}(m) = \hat{\Psi}(m) - \hat{\Xi}_{2\lambda}(m) \hat{T}(m)
	\qquad \forall m\in\bbZ.
\end{align}

The end of the proof is a simple computation which we resume next; details are given in Section~\ref{sec:Fourier}. 
The residue theorem shows that $ \hat{\Xi}_{\mu}(m) = \exp(- \mu |m|)$. 
In addition, a simple computation implies that $\hat{\Psi}(m)=\frac{(-1)^{m}}{\icomp m} \left(1 - \hat \Xi_{2\lambda}(m)\right)$ for $m \neq0$ and $\hat{\Psi}(0)=0$.
Substituting these in~\eqref{eq:rfourier}, we deduce that, for all $m \in \bbZ$ ($m\neq 0$ for the second equality), 
\begin{equation*}%\label{eq:Rhat}
	\hat{R}(m) = \frac{\hat{\Xi}_{\lambda}(m)}{1 + \hat{\Xi}_{2\lambda}(m)} = \frac{1}{2 \cosh(\lambda m)} 
	\qquad\text{and}\qquad
	\hat T (m) =  \frac{\hat{\Psi}(m)}{1 + \hat{\Xi}_{2\lambda}(m)} = \frac{(-1)^m}{\icomp m} \tanh (\lambda |m|). 
\end{equation*}
The conclusion follows by checking that functions given in~\eqref{eq:rho_explicit} have the Fourier coefficients above; details are given in Section~\ref{sec:Fourier}. 
The properties of positivity and analyticity of $\rho$ follow directly from its explicit expression (observe that the terms of the sum in \eqref{eq:rho_explicit} are positive and converge exponentially fast to $0$).
\end{proof}

Before turning to the discrete equations, let us provide an alternative proof of the uniqueness of the solution to~\eqref{eq:RhoDef} based on a fixed-point theorem. While this proof does not give an explicit formula for $\rho$ (a formula which will be useful later on), it highlights the importance of a particular norm which will play a central role in the next section. The goal is to prove that the map $\mathsf T_c$ defined below is contractive, a fact which immediately implies that \eqref{eq:RhoDef} has a unique solution.

Fix $\De<-1$. Consider the map $\mathsf T_c$ from the set $\calH$ of bounded functions $f:[-\pi,\pi]\longrightarrow\bbR$ with $\int_{-\pi}^\pi f(x)dx=1/2$ to itself\footnote{That $\mathsf T_c(\calH) \subset \calH$ follows from Fubini's theorem and the fact that $\Theta (\pi,y) - \Theta (-\pi,y) = -2\pi$ for all $y \in [-\pi,\pi]$.} defined by 
\begin{equation*}%\label{eq:RhoDef}
	2 \pi {\mathsf T}_c(f)(x) = 1 + \displaystyle \int_{-\pi}^{\pi} \pd_1 \Theta (x,y) f(y) dy\qquad \forall x\in[-\pi,\pi].
\end{equation*}
We claim that this map is contractive for the norm defined by 
\begin{align}\label{eq:defnorm}
	\|f\|~:=~\sup\big\{\,\big|k'(k^{-1}(x))f(x)\big|\,,\,x\in[-\pi,\pi]\big\}
	=~\sup\big\{\,\big|k'(\alpha) f(k(\alpha))\big|\,,\,\alpha\in[-\pi,\pi]\big\}.
\end{align}
Note that $k'$ is bounded away from 0 and infinity, so that the norm above is equivalent to the supremum norm $\|\cdot\|_\infty$ 
(with constants depending on $\Delta  <-1$). 

Indeed, let $f$ and $g$ be two functions in $\calH$. Set
$F=k'\cdot(f\circ k)$, $G=k'\cdot(g\circ k)$, 
$\tilde F=k'\cdot(\mathsf T_c(f)\circ k)$ and $\tilde G=k'\cdot(\mathsf T_c(g)\circ k)$, 
and notice that all these functions integrate to $1/2$ on $[-\pi,\pi]$.
Letting $m_{\Xi} = \min\{\Xi_{2\lambda}(x):x\in[-\pi,\pi]\} > 0$, we find that, for any $\alpha \in [-\pi,\pi]$,
\begin{align}
	|\tilde F(\alpha)-\tilde G(\alpha)|
	&=\tfrac1{2\pi}\Big|\int_{-\pi}^\pi \Xi_{2\lambda}(\alpha-\beta)(F(\beta)-G(\beta))d\beta\Big|\nonumber\\
	&=\tfrac1{2\pi}\Big|\int_{-\pi}^\pi \big(\Xi_{2\lambda}(\alpha-\beta)-m_{\Xi}\big)(F(\beta)-G(\beta))d\beta\Big|\nonumber\\
	&\le \tfrac1{2\pi}\|F-G\|_\infty\int_{-\pi}^\pi \big(\Xi_{2\lambda}(\beta)-m_{\Xi}\big)d\beta \nonumber\\
	&\le (1-m_{\Xi}) \|F-G\|_\infty,\label{eq:art}
\end{align}
where we used the fact that $\Xi_{2\lambda}(\beta)$ integrates to $2\pi$ (since $\hat\Xi_{2\lambda}(0)=1$) in the final line. Observing that $\|F-G\|_\infty = \|f - g\|$ and $\|\tilde F- \tilde G\|_\infty = \|\mathsf T_c(f) - \mathsf T_c(g)\|$, we conclude that $\mathsf T_c$ is contracting.

%Note that one may even deduce that the unique fixed point varies continuously in $\De$ by simply observing that the computation above works uniformly in $\De\le [-\infty,\De_0]$ (where $\De_0<-1$), and that therefore we could have works with functions from continuous functions on $[-\infty,\De_0]\times[-\pi,\pi]$ with the norm obtained by taking the supremum over $\De\in[-\infty,\De_0]$ of the norms defined in~\eqref{eq:defnorm}.

\subsection{The discrete Bethe equations}
The main object of this section is to prove the existence and regularity of solutions to the Bethe equations recalled below:
\begin{align*}\tag{BE$_\De$}
	N p_j = 2\pi I_j - \sum_{k=1}^n	\Theta (p_j,p_k), \qquad \forall j\in \{1,\dots,n\},
\end{align*}
with the choice~\eqref{eq:choice I} for the $I_j$, namely $I_j=j-\frac{n+1}2$ for $1\le j\le n$. We will be looking for solutions $\bp$ with additional symmetry (which takes into account the symmetry of the $I_j$). More precisely, we will be looking for solutions in
$$\calS_n:=\Big\{\bp = (p_1,\dots,p_n)\,:-\pi<p_1<p_2<\dots<p_n<\pi\text{ and } p_{n+1-j}=-p_j,\forall j\Big\}.$$
For any vector $\bp =(p_1,\dots, p_n)$, we set $p_0=p_n-2\pi$ and $p_{n+1}=p_1+2\pi$.
Hereafter, $N$ will always denote an even integer. 
\bigbreak

Maybe the most natural approach to proving the existence of solutions to~\eqref{eq:BE} (for fixed $\De$, $N$ and $n$) is to apply the Brouwer Fixed-Point Theorem\footnote{For sufficiently small values of $\De$, the Brouwer Fixed-Point Theorem is not even necessary, since one may show that $\mathsf T$ is contractive for the $\ell^1$ norm; this is not true for $\Delta$ close to $-1$.} to the map $\mathsf T : \calS_n \to \calS_n$ defined by 
\begin{align*}
 	\mathsf T(p_1,\dots,p_n) = \bigg(\frac{2\pi I_j}{N}-\frac{1}{N}\sum_{k=1}^n\Theta (p_j,p_k)\bigg)_{1\leq j\leq n}.
\end{align*}
Indeed, $\bp$ being a fixed point of $\mathsf T$ is equivalent to it satisfying~\eqref{eq:BE}.
The fact that $\mathsf T$ maps $\calS_n$ to itself follows directly from the monotonicity and anti-symmetry of $\Theta$, and from the fact that 
$$ -2\pi \leq \Theta(x,y) + \Theta(x,-y) \leq 2\pi  \qquad \forall x,y \in [-\pi,\pi].$$ 
%This maps is not as well adapted as the less natural map $\mathsf T$ defined by $\bq=\mathsf T(\bp)$, 
%where $\bq$ is an element of $\calS_n$ satisfying for every $1\le j\le n$,
%\begin{equation}\label{eq:r} 
%	q_j=\frac{2\pi I_j}{N}-\frac{1}{N}\sum_{k=1}^n\Theta (q_j,p_k).
%\end{equation}
%The fact that $\mathsf T$ is well-defined\footnote{Simply observe that $f_\bp$ is odd and $f_\bp(\pi) = f_{\bp}(-\pi) +n/N$ (this can be checked using the periodicity of $\Theta$ proved in \cite{BetheAnsatz1}), so that we have $f_\bp(\pm \pi) = \pm n/(2N)$. Then, we construct $q_j$ for $j\le n/2$ recursively using the Intermediate Value Theorem to construct $q_{j+1}>q_j$ such that 
%$f_\bp(q_j)=I_j/N$. If there is more than one solution, pick the smallest one. The values of $q_j$ for $j>n/2$ can be deduced by symmetry.} 
%follows easily from the Intermediate Value Theorem applied to the function $f_\bp$ defined for $x\in[-\pi,\pi]$ by
%\begin{equation}\label{eq:function}
%	f_\bp(x):=\tfrac1{2\pi}\Big(x+\frac1{N}\sum_{k=1}^n\Theta(x,p_k)\Big).
%\end{equation}

The Brouwer Fixed-Point Theorem indeed applies to $\mathsf T$, and solutions to~\eqref{eq:BE} may thus be shown to exist for any $\De < -1$. 
Having said that, it will be important that the solutions vary continuously as functions of $\De$, which does not follow from such arguments. 
Such a continuity statement was proved by Karol Kozlowksi in \cite{Koz15} and Pedro Goldbaum in \cite{Gol05} for the 1D Hubbard model. The argument used in the latter paper generalizes the earlier work of Yang and Yang \cite{YangYang66}, using an Index theorem on a well-chosen field, and thus deducing that the solutions form families of continuous curves, proving that there exists a continuous curve of solutions to~\eqref{eq:BE} in the set $[-\infty,-1)\times [-\pi,\pi]^n$, extending over the whole range of $\De$. 

However, we wish to prove a  stronger statement: we would like the solutions to have some regularity, in that they should be close to $\rho$, the solution we explicitly computed in~\eqref{eq:rho_explicit}, in some appropriately-chosen sense. This will be important when comparing solutions for different values $n$ to compute the limit of $\Lambda_r(N)/\Lambda_0(N)$. 

We therefore choose another path to prove the existence of solutions, based on the Implicit Function Theorem. 
Our approach has the further advantage of being fairly short and elementary, and of proving that the obtained solution is close to the continuous one (which renders the asymptotic analysis of $\La_0(N)$ essentially trivial). Furthermore, we will also prove that the map $\De \mapsto \bp_\De$ is not only continuous but analytic, a fact which will be useful in proving that the eigenvalue associated with $\bp_\De$ is the Perron-Frobenius one (see Section~\ref{sec:6V_PF}). The downside is that it only yields a solution on an interval $[-\infty,\De_N]$ with $\De_N<-1$, tending to $-1$ as $N$ tends to infinity (which will be sufficient for the application we have in mind).

Before stating the theorem, let us explain how we will compare a solution $\bp$ of~\eqref{eq:BE} to the continuous solution $\rho$ of~\eqref{eq:RhoDef}. For $\bp\in\calS_n$, introduce the step function $\rho_\bp:[-\pi,\pi] \to \bbR$ defined by 
\begin{equation}\label{eq:add}
	\rho_{\bp}(t)=\frac{I_{j+1}-I_j}{N(p_{j+1}-p_j)}
	\qquad \text{if $t\in[p_j,p_{j+1})$,}
\end{equation}
where $I_{n+1}$ and $I_0$ are defined by $I_{n+1}-I_1=I_n-I_0=N - n$. 
We measure the distance from $\bp$ to the continuous solution using $\|\rho_\bp-\rho\|$, where $\|\cdot\|$ is the norm introduced in~\eqref{eq:defnorm}. This norm appears naturally in this context since the map $\mathsf T_c$ -- which may be viewed as a continuous version of $\mathsf T$ -- is contractive for~$\|\cdot\|$.

\begin{remark}
We chose to write $I_{j+1}-I_j$ in the numerators, since this would be the natural quantity would the $I_j$ take arbitrary values. In our case, $I_{j+1}-I_j$ is equal to $1$ for any $1\le j<n$, and to $2r+1$ for $j=0$ and $n$ (recall that $n=N/2-r$).
\end{remark}

We are now in a position to state the main theorem of this section.

\begin{theorem}\label{thm:1}
Fix $r\ge0$ and $\De_0<-1$. There exist $K>0$ and $N_0$ such that, for any $N \geq N_0$, there exists a family of solutions $(\bp_\De)_{\De\le \De_0}$ to the Bethe equations~\eqref{eq:BE} with $n=N/2-r$ satisfying 
\begin{itemize}[nolistsep, itemsep = 2pt]
\item[(i)] $\De\mapsto \bp_{\De}$ is analytic on $[-\infty,\De_0)$ \footnote{Here, $f$ is analytic at $-\infty$ if there exists $(a_n)$ such that $f(x)=\sum_{n=0}^\infty a_n x^{-n}$ for all $x$ sufficiently small.},
\item[(ii)] $\|\rho_{\bp_\De}-\rho\|\le \frac{K}{N}$ for all $\De\le \De_0$.
\end{itemize}
\end{theorem}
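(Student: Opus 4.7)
The plan is to apply the analytic Implicit Function Theorem to the map defining the Bethe equations, starting from an explicit solution at $\De=-\infty$ and extending it analytically along the $\De$-axis up to $\De_0$. The contraction properties of $\mathsf T_c$ established in the previous subsection will serve a dual role: they will yield invertibility of the discrete Jacobian, and they will deliver the a priori bound (ii) through a discrete analog of the Banach fixed-point argument.

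I would first compute the seed at $\De=-\infty$. From~\eqref{eq:Theta}, as $\De\to-\infty$ one has $\Theta(x,y)\to y-x$, so the Bethe equations~\eqref{eq:BE} collapse to the linear system $(N-n)p_j=2\pi I_j-\sum_k p_k$. The antisymmetry imposed in $\calS_n$ forces $\sum_k p_k=0$, and one obtains the explicit seed $p_j^\infty:=2\pi I_j/(N-n)$. A direct computation shows that $\rho_{\bp^\infty}$ differs from the constant continuous density $\rho(\cdot,-\infty)\equiv 1/(4\pi)$ by $O(1/N)$ in $\|\cdot\|$, which is the a priori bound at the starting point. Writing the equations as $F_j(\De,\bp):=Np_j-2\pi I_j+\sum_k\Theta(p_j,p_k)=0$, the Jacobian in $\bp$ has the form $D_\bp F=N\,\Id+M(\De,\bp)$. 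Viewed as acting on step-function densities of type $\rho_\bp$, the operator $M/N$ is a Riemann-sum discretization of the integral operator $f\mapsto(2\pi)^{-1}\!\int\!\pd_1\Theta(\cdot,y)f(y)dy$ appearing in $\mathsf T_c$. Since $\mathsf T_c$ is contractive in $\|\cdot\|$ with constant $1-m_{\Xi}$ (see~\eqref{eq:art}), the Riemann-sum error $O(1/N)$ keeps $\Id+M/N$ invertible with uniformly bounded inverse for $N\ge N_0$, and the analytic dependence of $\Theta$ on $\De$ then allows the analytic IFT to produce a locally unique, analytic curve $\De\mapsto\bp_\De$.

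The a priori estimate (ii) is obtained by subtracting consecutive Bethe equations, which gives
\[
2\pi\rho_\bp(t)=1+\frac{1}{N(p_{j+1}-p_j)}\sum_{k=1}^n\bigl[\Theta(p_{j+1},p_k)-\Theta(p_j,p_k)\bigr]\qquad\text{for }t\in[p_j,p_{j+1}).
\]
The right-hand side is a Riemann sum approximating $2\pi(\mathsf T_c\rho_\bp)(t)$ with error $O(1/N)$ in $\|\cdot\|$, using the $C^1$-regularity of $\Theta$ together with the bound $p_{j+1}-p_j\le C/N$ (itself bootstrapped from the upper bound on $\rho_\bp$). The contractivity of $\mathsf T_c$ and the identity $\rho=\mathsf T_c\rho$ then yield
\[
\|\rho_\bp-\rho\|\le(1-m_{\Xi})\,\|\rho_\bp-\rho\|+O(1/N),
\]
whence $\|\rho_\bp-\rho\|\le K/N$. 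This estimate confines $\bp_\De$ to a compact subset of $\calS_n$ (the $p_j$ remain uniformly separated from one another and from $\pm\pi$ thanks to the upper and lower bounds on $\rho$), so the IFT can be iterated all the way to $\De_0$ without blow-up; the $\calS_n$-symmetry is preserved along the curve because the Bethe equations respect it and by local uniqueness of the IFT solution.

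The main obstacle is making the Riemann-sum-to-integral comparison quantitative in the non-uniform $\|\cdot\|$-norm from~\eqref{eq:defnorm}, which requires carefully tracking the smooth weight $k'$ and the uniform smoothness of $\pd_1\Theta$ in $\De$. Moreover, both the contraction constant $m_{\Xi}$ and the continuous density $\rho$ degenerate as $\De\nearrow-1$, which is exactly why the construction must be restricted to $[-\infty,\De_0]$ for a fixed $\De_0<-1$ rather than extended uniformly to $(-\infty,-1)$.
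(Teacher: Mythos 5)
Your overall strategy coincides with the paper's: start from the explicit seed $p_j^\infty = 2\pi I_j/(N-n)$ at $\De=-\infty$, apply the analytic Implicit Function Theorem repeatedly to extend $\De\mapsto\bp_\De$ to the interval $[-\infty,\De_0]$, and use the contraction property of $\mathsf T_c$ in the $\|\cdot\|$-norm to keep $\rho_{\bp_\De}$ within $O(1/N)$ of $\rho$ so that the extension never stalls. Your treatment of item~(ii) --- subtracting consecutive Bethe equations to exhibit $\rho_\bp(t)$ as a Riemann sum approximating $\mathsf T_c(\rho_\bp)(t)$, and then using the contraction inequality $\|\rho_\bp - \rho\| \le (1-m_\Xi)\|\rho_\bp-\rho\| + O(1/N)$ --- is essentially the same as the paper's Lemma~\ref{lem:1}, which uses the Mean Value Theorem in place of the difference quotient. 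Both establish the needed \emph{stability}, not genuine contractivity of $\mathsf T$, and both handle the bootstrap through the continuity of $\De\mapsto\bp_\De$; this part of your proposal is fine.

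The gap is in your invertibility argument. You claim that $M/N$ in the Jacobian $D_\bp F = N\,\Id + M$ is a Riemann-sum discretization of the operator $f \mapsto (2\pi)^{-1}\int\partial_1\Theta(\cdot,y)f(y)\,dy$ appearing in $\mathsf T_c$, and that contractivity of $\mathsf T_c$ therefore forces invertibility. This is not the right identification. Writing out $M_{jk}$, one finds a \emph{scalar} (diagonal) contribution $\delta_{jk}\sum_{\ell\neq j}\partial_1\Theta(p_j,p_\ell)$ which approximates $N\int\partial_1\Theta(p_j,y)\rho(y)\,dy = N(2\pi\rho(p_j)-1)$, plus an off-diagonal contribution $\partial_2\Theta(p_j,p_k)$, which is the kernel of the Offset Equation~\eqref{eq:VarsigmaDef}, not that of~\eqref{eq:RhoDef}. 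Crucially, $\mathsf T_c$ acts on \emph{densities} (step functions $\rho_\bp$), whereas the Jacobian acts on \emph{positions} $\bp$, and these are related through an additional rescaling by $\rho_\bp^{-1}$. Without that rescaling, the Jacobian is simply not a discretization of $\mathbb I - \mathsf T_c$. The paper handles this precisely in Lemma~\ref{lem:2}: it precomposes $A = d(\mathbb I - \mathsf T)$ with the diagonal matrix $B$ having entries $N(p_{j+1}-p_j)=\rho_\bp(p_j)^{-1}$ (which is exactly the position-to-density change of variable), so that the diagonal of $\tilde A = AB$ becomes $\approx 2\pi$, and then shows $\tilde A$ is diagonally dominant via the explicit calculus bound $G(x,0)-G(x,-\pi) \le 4\arctan\big(1/(2|\De_0|\sqrt{\De_0^2-1})\big) < 2\pi$ with $G(x,y)=\Theta(x,y)-\Theta(-x,y)$. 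Your sketch needs this rescaling and the switch of kernel from $\partial_1\Theta$ to $\partial_2\Theta$; the contractivity of $\mathsf T_c$ alone does not supply invertibility of the discrete Jacobian.
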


Property (ii) should be understood as a regularity statement. It implies in particular that, for all $0\le j\le n$,
\begin{align}\label{eq:regularity}
p_{j+1}-p_j-\frac{I_{j+1}-I_j}{\rho (p_j)N}&~=~ O\Big(\frac{1}{N^2}\Big),
\end{align}
where $O(\cdot)$ depends on $\De_0$ only%
\footnote{The notation $O(N^\alpha)$ is used to indicate a quantity that is bounded by $CN^{\alpha}$ for all $N$, where $C$ is a constant. 
We say $O(\cdot)$ is uniform in certain parameters, to mean that $C$ may be chosen independently of those parameters.}.
As an important consequence for us, the previous expression implies that, for $N > N_0$ large enough and $0\le j\le n$,
\begin{align}\label{eq:cd}
p_{j+1}-p_j~&\le~ \frac {2 (I_{j+1}-I_j)}{m_\rho N},
\end{align}
where $m_\rho > 0$ is the infimum of $\rho$ over $x\in[-\pi,\pi]$ and $\De\le \De_0$.  
It will be crucial to us that the bound~\eqref{eq:cd} above does not depend on the quantity $K$ of Theorem~\ref{thm:1} (even though $N_0$ may depend on $K$).
Also notice that (ii) implicitly shows that $\bp_\De$ is in the interior of $\calS_n$ for all $\De\leq\De_0$, provided $N$ is large enough. 

%\begin{figure}
%	\begin{center}
%	\includegraphics[width=0.44\textwidth]{rhoP20x4VSrho}\quad
%	\includegraphics[width=0.44\textwidth]{rhoP50x4VSrho}\vspace{6pt}
%	\includegraphics[width=0.44\textwidth]{rhoP100x4VSrho}\quad
%	\includegraphics[width=0.44\textwidth]{rho4}
%	\caption{Numerical simulations of $\rho_\bp$ for $\lambda=4$, $N=20, 50, 100$ and $n=N/2$.
%	The limit distribution $\rho$ is shown with the dotted line on each plot and on the last plot.}
%	\end{center}
%\end{figure}

The rest of this section is dedicated to proving Theorem~\ref{thm:1}.
\bigbreak
As we already mentioned, our strategy is based on the Implicit Function Theorem, which will be applied to $\mathbb I-\mathsf T$ (seen as a function of $\De$ and $\bp$), where $\mathbb I$ denotes the identity function:
$$ \mathbb I (\De,\bp) = \bp,\qquad \forall \bp \in \calS_n \text{ and } \De < -1.$$ 
There is no {\em a priori} reason that allows us to apply the Implicit Function Theorem at any zero of $\mathbb I-\mathsf T$, as the differential is not guaranteed to be invertible. Nonetheless, we will show that we may construct a family of such zeros that remains close to the continuous solution, and that this ensures that the differential of $\mathbb I -\mathsf T$ is invertible. 
The key of this argument is the following stability lemma. 

%\begin{lemma}\label{lem:1}
%Fix $r\ge0$ and $\De_0<-1$. 
%Then there exist $\varepsilon,K>0$ and $N_0$ such that for any $\De\le \De_0$ and $N\ge N_0$, there is no solution $\bp$ of~\eqref{eq:BE} with 
%$$ (1-\varepsilon)\frac{K}N\le \|\rho_\bp-\rho\|\le \frac{K}N.$$
%\end{lemma}

\begin{lemma}\label{lem:1}
	Fix $r\ge0$ and $\De_0<-1$. 
	Then, for $K > 0$ and $N_0$ large enough, for any $\De\le \De_0$ and $N\ge N_0$, there exists no solution $\bp \in \calS_n$ of~\eqref{eq:BE} with $n= N/2 -r$ and 
	\begin{align*}
		\frac{K}{2N}\le \|\rho_\bp-\rho\|\le \frac{K}N.
	\end{align*}
\end{lemma}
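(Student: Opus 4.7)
The plan is a fixed-point argument: under the hypothesis $\|\rho_\bp-\rho\|\le K/N$, I will show that in fact $\|\rho_\bp-\rho\|\le C_0/N$ with $C_0$ depending only on $\De_0$ and $r$ but not on $K$; picking $K>2C_0$ then contradicts the lower bound $\|\rho_\bp-\rho\|\ge K/(2N)$. The heart of the argument is the estimate $\|\rho_\bp-\mathsf T_c(\rho_\bp)\|\le C_1/N$ with $C_1$ independent of $K$, which combines with the contractivity of $\mathsf T_c$ shown in \eqref{eq:art} to close the loop.

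First, subtracting the Bethe equation at index $j$ from the one at $j+1$, dividing by $p_{j+1}-p_j$ and invoking the mean-value theorem, I obtain a discrete analogue of \eqref{eq:RhoDef}: for every $t\in[p_j,p_{j+1})$,
\[ 2\pi\rho_\bp(t) \;=\; 1 \;+\; \frac1N\sum_{k=1}^n \pd_1\Theta(\xi_{j,k},p_k), \qquad \xi_{j,k}\in(p_j,p_{j+1}). \]
The plan is then to replace $\xi_{j,k}$ by $t$, and the Riemann sum $\frac1N\sum_k\pd_1\Theta(t,p_k)$ by the integral $\int_{-\pi}^{\pi}\pd_1\Theta(t,y)\rho_\bp(y)\,dy$, each at cost $O(1/N)$ uniformly in $t$. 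The crucial tool is a uniform spacing bound: using $\|\rho_\bp-\rho\|\le K/N$ and the strict positivity of $\rho$ from Proposition~\ref{prop:rhoProp}, we get $\rho_\bp\ge m_\rho/2$ pointwise for $N\ge N_0(K)$, hence $p_{j+1}-p_j\le 2(I_{j+1}-I_j)/(m_\rho N)=O(1/N)$ uniformly in $j$ (even at the two boundary intervals, where $I_{j+1}-I_j=2r+1$ is a fixed constant). The resulting $O(1/N)$ constant depends on $m_\rho$ and $r$ but not on $K$: the hypothesis is used only qualitatively to lock in a lower bound on $\rho_\bp$.

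With the spacing bound in hand, smoothness of $\Theta$ immediately yields $|\pd_1\Theta(\xi_{j,k},p_k)-\pd_1\Theta(t,p_k)|=O(1/N)$, costing $O(1/N)$ after summing over $k$ and dividing by $N$. For the Riemann sum, on each interval $[p_i,p_{i+1})$ the step function $\rho_\bp$ is constant while $\pd_1\Theta(t,\cdot)$ varies by $O(p_{i+1}-p_i)$, so the interior terms aggregate to $\frac1N\sum_k\pd_1\Theta(t,p_k)$ modulo $O(1/N)$ corrections; the two boundary intervals $i\in\{0,n\}$ contribute $O(1/N)$ directly since $(2r+1)/N=O(1/N)$. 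Combining, $\|\rho_\bp-\mathsf T_c(\rho_\bp)\|\le C_1/N$ (passing between $\|\cdot\|$ and $\|\cdot\|_\infty$ is harmless since $k'$ is bounded above and below on $[-\pi,\pi]$ for $\De\le\De_0$).

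To close the contraction, note that $\int\rho_\bp=\tfrac12+O(1/N)$ rather than exactly $\tfrac12$, so $\rho_\bp$ lies slightly outside $\calH$. Setting $\tilde\rho_\bp:=\rho_\bp-\delta/(2\pi)$ with $\delta:=\int\rho_\bp-\tfrac12=O(1/N)$ places $\tilde\rho_\bp\in\calH$, and the identity $\Theta(x,\pi)-\Theta(x,-\pi)=2\pi$ (immediate from $\Theta(\pi,y)-\Theta(-\pi,y)=-2\pi$ together with antisymmetry of $\Theta$) gives $\tilde\rho_\bp-\mathsf T_c(\tilde\rho_\bp)=\rho_\bp-\mathsf T_c(\rho_\bp)$; applying \eqref{eq:art} to $\tilde\rho_\bp,\rho\in\calH$ then yields $m_\Xi\|\tilde\rho_\bp-\rho\|\le C_1/N$, whence $\|\rho_\bp-\rho\|\le C_0/N$ as desired. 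The main obstacle throughout is ensuring all implicit $O(1/N)$ constants are genuinely $K$-independent; this is precisely why the hypothesis $\|\rho_\bp-\rho\|\le K/N$ must enter only qualitatively, as an a priori bound used only to lower-bound $\rho_\bp$ (and hence upper-bound spacings), after which only smoothness of $\Theta$ and the fixed parameters $\De_0,r$ enter the error estimates.
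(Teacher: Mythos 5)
Your proposal follows essentially the same route as the paper: apply the Mean Value Theorem to the Bethe equations to obtain a discrete analogue of~\eqref{eq:RhoDef}, use the $K$-independent spacing bound $p_{j+1}-p_j\le 2(I_{j+1}-I_j)/(m_\rho N)$ to control the Riemann sum errors, and close via the contractivity estimate~\eqref{eq:art}. The paper packages the discrete estimate through the auxiliary function $f_\bp(x)=\tfrac1{2\pi}\big(x+\tfrac1N\sum_k\Theta(x,p_k)\big)$, whereas you work directly with $\mathsf T_c(\rho_\bp)$; these differ by $O(1/N)$ and the argument structure is the same.

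There is, however, a gap in your treatment of the boundary intervals $j=0$ and $j=n$. The derivation ``subtract the Bethe equation at index $j$ from that at $j+1$'' is only available for $1\le j\le n-1$, since~\eqref{eq:BE} is stated for those indices. On the boundary intervals $\rho_\bp$ is of order~$1$ (not $O(1/N)$), so you cannot dispense with the discrete analogue there; your remark that the boundary intervals contribute $O(1/N)$ to the Riemann sum is correct but does not establish the pointwise bound $|\rho_\bp(t)-\mathsf T_c(\rho_\bp)(t)|\le C_1/N$ for $t\in[-\pi,p_1)$ or $t\in[p_n,\pi]$. To fill this one needs the extended relations $f_\bp(p_0)=I_0/N$ and $f_\bp(p_{n+1})=I_{n+1}/N$, deduced (as in the paper) from $\Theta(x+2\pi,y)-\Theta(x,y)=-2\pi$, after which the MVT applies across the boundary intervals as well, with $|\xi_{j,k}-t|=O((2r+1)/N)$.

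Separately, your observation that $\int_{-\pi}^\pi\rho_\bp=\tfrac12+r/N\neq\tfrac12$ for $r>0$ is a genuine and welcome correction to the paper's passing claim. But the asserted cancellation $\tilde\rho_\bp-\mathsf T_c(\tilde\rho_\bp)=\rho_\bp-\mathsf T_c(\rho_\bp)$ is not correct: it would require $\int_{-\pi}^\pi\pd_1\Theta(x,y)\,dy=2\pi$, which fails (at $\De=-\infty$ this integral is $-2\pi$, and in general, writing $x=k(\alpha)$, it equals $-2\pi\,\Xi_{3\lambda}(\alpha)/\Xi_\lambda(\alpha)$). The identity $\Theta(x,\pi)-\Theta(x,-\pi)=2\pi$ you invoke pertains to $\int\pd_2\Theta(x,y)\,dy$, not $\int\pd_1\Theta(x,y)\,dy$, and it is $\pd_1\Theta$ that appears in $\mathsf T_c$. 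Fortunately the error is harmless: $\int\pd_1\Theta(x,y)\,dy$ is uniformly bounded for $\De\le\De_0$, so $\|\mathsf T_c(\tilde\rho_\bp)-\mathsf T_c(\rho_\bp)\|=O(\delta)=O(1/N)$ and the contraction still closes. More simply, one can avoid introducing $\tilde\rho_\bp$ altogether: the derivation of~\eqref{eq:art} uses $\int F=\int G$ only to replace $\Xi_{2\lambda}$ by $\Xi_{2\lambda}-m_\Xi$, and when the integrals differ by $r/N$ this introduces an extra $m_\Xi r/(2\pi N)=O(1/N)$, which is absorbed into $C_1/N$.
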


This lemma should not appear as a surprise. Indeed, as mentioned above, $\mathsf T$ is, in some sense, a discrete version of $\mathsf T_c$ (which is contractive and has fixed point $\rho$), at least in a vicinity of~$\rho$, and could therefore be expected to  be contractive for $N$ large enough. We did not manage to prove this fact, but the lemma above is sufficient for our use.

Let us assume this lemma for now. Write $\bbR_\sym^n$ for the $\lfloor n/2\rfloor$-dimensional subspace of $\bbR^n$ of symmetric vectors: 
$$ \bbR_\sym^n = \big\{(q_1,\dots,q_n) \in \bbR^n:\, q_j = -q_{n + 1 -j}, \, \forall j\big\}.$$
The map $\mathbb I - \mathsf T$ leaves this space stable (as can be seen by the symmetry properties of $\Theta$).
Therefore, we may apply the Implicit Function Theorem to $\mathbb I - \mathsf T$ 
as a function from $[-\infty,\De_0]\times \calS_n$ to $\bbR^n_\sym$ (recall that $\calS_n \subset \bbR^n_{\sym}$).
Write $d(\mathbb I - \mathsf T)$ for (the restriction of) the differential of $\mathbb I - \mathsf T$ in $\bp$ 
as an automorphism of $\bbR^n_\sym$. 
To apply the  Implicit Function Theorem at some point $(\De,\bp)$ one needs to ensure that $d(\mathbb I - \mathsf T)(\De,\bp)$ is invertible.
This is done via the lemma below; its proof is deferred to the end of the section.   

\begin{lemma}\label{lem:2}
	Fix $r\ge0$, $\De_0<1$ and $K>0$. Then there exists $N_0$ such that, for any $\De\le\De_0$ and $N\ge N_0$,
	$d(\mathbb I-\mathsf T)(\De,\bp)$ is invertible 
	for any solution $\bp \in \calS_n$ of~\eqref{eq:BE} with $n= N/2 -r$ such that $\|\rho_\bp-\rho\|\le K/N$.
\end{lemma}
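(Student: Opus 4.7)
Since $\bbR^n_\sym$ is finite-dimensional, invertibility of $d(\bbI-\mathsf T)(\De,\bp)$ follows once we show that $d\mathsf T(\De,\bp)$ has operator norm strictly less than $1$ in a suitable norm on $\bbR^n_\sym$; then $\bbI - d\mathsf T$ is automatically invertible via a Neumann series. A direct differentiation of $\mathsf T$ gives the matrix entries
\[(d\mathsf T)_{jj} = -\tfrac{1}{N}\sum_{m\ne j}\partial_1\Theta(p_j,p_m), \qquad (d\mathsf T)_{jk} = -\tfrac{1}{N}\partial_2\Theta(p_j,p_k) \quad\text{for } k\ne j.\]
The plan is to view this matrix as a Riemann-sum discretization of the linear part of $\mathsf T_c$ and transfer the continuous contraction estimate~\eqref{eq:art} to the discrete setting.

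Concretely, setting $\alpha_j := k^{-1}(p_j)$ and invoking the identities established in the proof of Proposition~\ref{prop:rhoProp}, namely $\partial_1\Theta(k(\alpha),k(\beta)) = -\Xi_{2\lambda}(\alpha-\beta)/k'(\alpha)$ and $\partial_2\Theta(k(\alpha),k(\beta)) = \Xi_{2\lambda}(\alpha-\beta)/k'(\beta)$, the matrix $d\mathsf T(\De,\bp)$ becomes, up to conjugation by the weights $k'(\alpha_j)$, a discrete convolution with kernel $\Xi_{2\lambda}/(2\pi)$. I would then work with the natural discrete analogue of the norm~\eqref{eq:defnorm}:
\[\|\mathbf v\|_d \;:=\; \max_{1\le j\le n} k'(\alpha_j)\,|v_j|, \qquad \mathbf v\in\bbR^n.\]

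The aim is then to prove, for every $\mathbf v\in\bbR^n_\sym$ and $N\ge N_0$,
\[\|d\mathsf T(\De,\bp)\,\mathbf v\|_d \;\le\; \Bigl(1 - m_\Xi + \tfrac{C}{N}\Bigr)\|\mathbf v\|_d,\]
where $m_\Xi := \min_{[-\pi,\pi]} \Xi_{2\lambda} > 0$ and $C = C(\De_0,K)$ is uniform over $\De\le \De_0$ and $\bp$ satisfying the hypothesis. Two ingredients feed this bound. First, the assumption $\|\rho_\bp - \rho\| \le K/N$, together with~\eqref{eq:cd}, controls the spacings $\alpha_{j+1}-\alpha_j$ uniformly as $O(1/N)$, so that sums of the form $\tfrac{1}{N}\sum_m f(p_j,p_m)$ can be approximated by $\int f(p_j,y)\rho(y)\,dy$ up to error $O(1/N)$. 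Second, the antisymmetry $v_j = -v_{n+1-j}$, combined with the symmetry of $\{\alpha_j\}$ in $\calS_n$ and the evenness of $k'$, gives the exact discrete zero-integral identity $\sum_j \tfrac{\alpha_{j+1}-\alpha_{j-1}}{2}\,k'(\alpha_j)\, v_j = 0$; this is precisely what permits subtracting the constant $m_\Xi$ from the kernel $\Xi_{2\lambda}$ in the discrete convolution, in direct parallel with~\eqref{eq:art}.

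Choosing $N_0$ large enough that $C/N_0 \le m_\Xi/2$, the operator norm of $d\mathsf T(\De,\bp)$ on $\bbR^n_\sym$ is bounded above by $1 - m_\Xi/2 < 1$, and the conclusion follows. The main obstacle is keeping the error constant $C$ uniform across the whole range $\De \le \De_0$: the Lipschitz and derivative bounds for $\Theta$, $k$, and $\rho$ all degenerate as $\De \nearrow -1$ (equivalently as $\lambda \searrow 0$), which is precisely why $N_0$ must be allowed to depend on both $\De_0$ and $K$. Some care is also needed at the boundary indices $j\in\{1,n\}$, where the regularity $I_{j+1}-I_j = 2r+1$ produces slightly larger spacings; however, these contribute only $O(1/N)$ to the convolution estimate and can be absorbed into $C/N$.
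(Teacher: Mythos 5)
Your high-level strategy — reduce to a contraction bound on $d\mathsf T$ in a weighted $\ell^\infty$ norm and invoke a Neumann series — is sensible and not crazy in spirit, but the central structural claim on which the whole estimate rests is wrong, and the paper actually proceeds quite differently.

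You assert that, in the variables $\alpha_j = k^{-1}(p_j)$ and up to conjugation by $\mathrm{diag}(k'(\alpha_j))$, the matrix $d\mathsf T$ becomes a discrete convolution with kernel $\Xi_{2\lambda}/(2\pi)$. This is not the case. Using the two identities you cite, one finds
\[
(d\mathsf T)_{jj} = \frac{1}{Nk'(\alpha_j)}\sum_{m\ne j}\Xi_{2\lambda}(\alpha_j-\alpha_m) > 0,
\qquad
(d\mathsf T)_{jk} = -\frac{\Xi_{2\lambda}(\alpha_j-\alpha_k)}{Nk'(\alpha_k)} < 0 \ (k\ne j),
\]
so that, writing $u_m := v_m/k'(\alpha_m)$,
\[
(d\mathsf T\,\mathbf v)_j = \frac{1}{N}\sum_{m\ne j}\Xi_{2\lambda}(\alpha_j-\alpha_m)\,(u_j - u_m).
\]
This is a (weighted) graph Laplacian, not a convolution: the diagonal and off-diagonal entries have opposite signs and no single diagonal conjugation turns $d\mathsf T$ into a Toeplitz-like operator. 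In particular, the mechanism behind the continuous contraction estimate~\eqref{eq:art} — exploit $\int (F-G)=0$ to replace the convolution kernel $\Xi_{2\lambda}$ by $\Xi_{2\lambda}-m_\Xi$, and then bound the $L^1$-mass — does not transfer directly, because the diagonal term appears with the ``wrong'' sign and already carries a size-$O(1)$ contribution $\approx 1 - 2\pi\rho(p_j)$. The zero-sum identity you cite is genuine (both $\sum_m v_m/k'(\alpha_m)=0$ and its Riemann-weighted variant hold by antisymmetry), but it only kills a piece of the off-diagonal sum; after that subtraction the bound you need, namely $\|d\mathsf T\|_d \le 1-m_\Xi + C/N$, reduces to an inequality of the form $\Xi_\lambda(\alpha_j)\int(\Xi_{2\lambda}(\alpha_j-\beta)-m_\Xi)\rho(k(\beta))/\Xi_\lambda(\beta)\,d\beta \le 2\pi\rho(p_j)-m_\Xi$, which is not established and has no obvious reason to hold uniformly.

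The paper takes a different route that sidesteps both problems. Rather than trying to show $d\mathsf T$ is contractive, it multiplies $A := d(\mathbb I - \mathsf T)$ on the \emph{right} by the diagonal matrix $B = \mathrm{diag}(N(p_{j+1}-p_j)) = \mathrm{diag}(\rho_\bp(p_j)^{-1})$. This is exactly the Riemann weight needed to turn the off-diagonal entries into a bona fide Riemann sum, $\tilde A_{jk} = (p_{k+1}-p_k)\,\partial_2 G(p_j,p_k)$ with $G(x,y) := \Theta(x,y)-\Theta(-x,y)$, and the diagonal entries into $\tilde A_{jj}\approx 2\pi$ via the continuous Bethe equation~\eqref{eq:RhoDef}. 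Crucially, the antisymmetry is encoded not as a constraint on tangent vectors but by working in the half-dimensional basis $(e_j - e_{n+1-j})_{j\le \lfloor n/2\rfloor}$, and the off-diagonal row sum then telescopes to $G(p_j,0)-G(p_j,-\pi)+O(1/N)$, which a calculus bound shows is strictly below $2\pi$ uniformly in $\De\le\De_0$. Diagonal dominance of $\tilde A$ gives invertibility of $\tilde A$, hence of $A$. If you want to keep your Neumann-series framing, the missing ingredients are (i) the correct normalization is by $B$, not by $\mathrm{diag}(k')$, and (ii) the antisymmetry is most cleanly exploited by passing to the half basis and bounding the resulting off-diagonal sum, not by subtracting $m_\Xi$ from the kernel.

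(Also note the typo carried over from the statement: $\De_0<1$ should read $\De_0<-1$.)
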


\begin{proof}[Theorem~\ref{thm:1}]
Fix $r\ge0$ and $\De_0<-1$; $K \geq 2r$ and $N$ will be assumed large enough for Lemmas~\ref{lem:1} and~\ref{lem:2} to apply,
further conditions on $N$ will appear in the proof.   
%The theorem follows from the two lemmas above and an analysis of~\eqref{eq:BE} at $\De = -\infty$. 
%Below we give the main arguments, but omit some more tedious details. 

For $\De = -\infty$ we have $\Theta(x,y) = y-x$, and the Bethe equations have a unique solution $\bp_{-\infty}$ with $p_j=2\pi I_j/(N-n)$ for $1\le j\le n$. This solution satisfies $\bp \in \calS_n$ and $\|\rho_{\bp}-\rho\|\le K/N$ 
($\rho$ is the constant function $1/(4\pi)$ when $\De = -\infty$ and we assumed $K \geq 2r$).

Due to Lemma~\ref{lem:2}, the Implicit Function Theorem may be repeatedly applied to extend the solution from $\bp_{-\infty}$ to an analytic function $\De \mapsto \bp_{\De}$, as long as $\|\rho_{\bp_\De} - \rho \| \leq K/N$ and $\bp_\De \in \calS_n$.
The latter condition is implied by the former when $N$ is large enough; we may therefore ignore it.
Lemma~\ref{lem:1} shows that $\bp_{\De}$, being continuous in $\De$, may never exit the ball of radius $K/(2N)$ around $\rho$ for the $\|\cdot\|$-norm. 
Thus, the map $\De \mapsto \bp_{\De}$ is defined for all $\De \leq \De_0$, analytic and such that $\|\rho_{\bp_\De} - \rho \| \leq K/N$ for all $\De\le \De_0$.
%
%Let us point out that one also needs to ensure that $\bp_\De$ remains in $\calS_n$ as $\De$ increases from $-\infty$. 
%To see this, observe that $d(\mathbb I - \mathsf T)(\bp)$ leaves the sub-space of symmetric vector
%$\{(q_1,\dots,q_n) \in \bbR^n:\, q_i = -q_{n + 1 -i}\}$ invariant. The Implicit Function theorem may therefore be applied in the setting of symmetric vectors. Moreover, the condition $\|\rho_{\bp_\De} - \rho \| \leq K/N$ implies that $\bp_\De$ may not approach the boundary of $\calS_n$ within the sub-space of symmetric vectors. 
\end{proof}
%
%Observe that the theorem follows immediately from these two lemmas and the simple observation that at $-\infty$, Bethe Equation has a unique solution $\bp$ equal to $p_j=2\pi I_j/(N-n)$ for $1\le j\le n$, which satisfies that $\|\rho_\bp-\rho\|\le r/N$ (we used that $\Theta (x,y)=y-x$ and $\rho$ is constant equal to $1/(4\pi)$) and that we may assume that $r\le (1-\varepsilon )K$ by providing $K$ large enough.

To close the section, we prove Lemmas~\ref{lem:1} and~\ref{lem:2}.
 
\begin{proof}[Lemma~\ref{lem:1}] 
Let $r\ge0$ and $\De_0<-1$; bounds on $K$ and $N_0$ will appear throughout the proof. 
Consider $\De \leq \De_0$, $N\ge N_0$ and $\bp = (p_1,\dots, p_n)$ a solution of~\eqref{eq:BE} 
with $n = N/2 - r$ and $\|\rho_\bp-\rho\|\le K/N$.

In this proof, $O(\cdot)$ is uniform in $K$ and $j = 1,\dots, n$ (but may depend on $r$). 
In particular, by~\eqref{eq:cd}, we may write that $p_{j+1}-p_j=O(1/N)$, provided $N_0$ is large enough. 
For further reference, note that the derivatives of the functions $\rho$, $k$, $\Theta$, etc, are all bounded uniformly in $\De < \De_0$. 

Let  $f_{\bp}:\bbR \to \bbR$ be the smooth function defined by 
\begin{equation}\label{eq:function}
	f_\bp(x):=\frac1{2\pi}\Big(x+\frac1{N}\sum_{k=1}^n\Theta(x,p_k)\Big).
\end{equation}
For any $t\in[p_j,p_{j+1})$, apply the Mean Value Theorem to construct $\xi_j\in(p_j,p_{j+1})$ such that\begin{equation} \label{eq:MVT}
\rho_\bp(t)\stackrel{\eqref{eq:add}}=\frac{I_{j+1}-I_j}{N(p_{j+1} - p_j)} =\frac{ f_\bp(p_{j+1})-f_\bp(p_j)}{p_{j+1} - p_j} = f'_\bp(\xi_j).
\end{equation}
In the second identity, we used that $\bp$ is a fixed point for $\mathsf T$ and therefore satisfies $f_\bp(p_{j}) = I_j/N$ for any $j \in \{1,\dots, n-1\}$. In fact, this relation also holds for $j=0$ and $n$; to see this, we recall that $\Theta (x + 2\pi,y) - \Theta (x,y) = -2\pi$, and therefore
\[
f_{\bp}(x \pm  2\pi) = f_{\bp}(x) \pm \frac{N -n}{N}.
\] 
Thus,
\[
f(p_1) - f(p_0) = \frac{I_1 -(I_n - N + n)}{N} = \frac{I_1 - I_0}{N}. 
\]
The argument is identical for $j = n$. Since $p_{j+1}-p_j=O(1/N)$, for any $t \in [p_j,p_{j+1})$, we may approximate $\rho(t)$ by $\rho(\xi_j)$ and $k'(k^{-1}(t))$ by $k'(k^{-1}(\xi_j))$ to deduce that 
\begin{align*}
	k'(k^{-1}(t))|\rho_\bp(t)-\rho(t)|
	&\le (1+O(\tfrac1N))\,k'(k^{-1}(\xi_j))|f'_\bp(\xi_j)-\rho(\xi_j)|+O(\tfrac1N)\\
	&\le (1+O(\tfrac1N)) \|f'_\bp-\rho\|+O(\tfrac1N).
\end{align*}
Therefore, the lemma follows readily from the following inequality, which we prove below:
\begin{equation}\label{eq:ac}
	\|f'_{\bp}-\rho\|\le (1-m_{\Xi})\|\rho_{\bp}-\rho\|+ O(\tfrac1N),
\end{equation}
where $m_\Xi = \inf\{\Xi_{2\lambda}(x) :\, x \in [-\pi,\pi] \text{ and } \De\le\De_0\} > 0$.
Indeed, assuming~\eqref{eq:ac} holds, the previous computation shows that
$$\|\rho_\bp-\rho\|\le (1-m_\Xi)\|\rho_\bp-\rho\|+O(\tfrac1N),$$
which implies the result for $K$ large enough (recall that the constant in $O(1/N)$ above does not depend on $K$).

Hence, we only need to prove~\eqref{eq:ac} to finish the proof of the lemma.
Set $R_\bp(\alpha):=\rho_{\bp}(k(\alpha))k'(\alpha)$. %and the step function
%$$g_\bp(x,y):=\begin{cases}\partial_1\Theta(x,p_n)&\text{ if $y<p_1$ or $y\ge p_n$,}\\
%\partial_1\Theta(x,p_j)&\text{ if $y\in[p_j,p_{j+1})$ for some $1\le j<n$.}\end{cases}$$
Fix $x=k(\alpha)$. With this definition, the change of variable explained in the previous section implies that 
\begin{align*}%\label{eq:ard}
    2\pi f_\bp'(x)
    = 1+\frac1{N}\sum_{k=1}^n\partial_1\Theta(x,p_k)
    &= 1 + \int_{-\pi}^\pi \partial_1\Theta(x,y) \rho_{\bp}(y)dy ~+~O(\tfrac1N)\\
    &= 1 + \frac{1}{k'(\alpha)}\int_{-\pi}^\pi \Xi_{2\lambda}(\alpha-\beta)R_\bp(\beta)d\beta ~+~O(\tfrac1N), 
\end{align*}
where we used again that $\max\{p_{j+1}-p_j\}=O(\tfrac1N)$ and that $\partial_2\partial_1\Theta$ is bounded uniformly to approximate $\partial_1\Theta(x,p_k)$ by $\partial_1\Theta(x,k(\beta))$. Thus,
\begin{align*}
k'(k^{-1}(x))|f_\bp'(x)-\rho(x)|~&\stackrel{\phantom{\eqref{eq:art}}}=~\Big|\tfrac1{2\pi}\int_{-\pi}^\pi \Xi_{2\lambda}(\alpha-\beta)(R_\bp(\beta)-R(\beta))d\beta\Big|~+~O(\tfrac1N)\\
&\stackrel{\eqref{eq:art}}{\le}~(1-m_\Xi)\|\rho_\bp-\rho\|~+~O(\tfrac1N),
\end{align*}
where in the last inequality, we can apply~\eqref{eq:art} since 
%\begin{equation*}
$	\int_{-\pi}^\pi R_\bp(\alpha)d\alpha=\int_{-\pi}^\pi\rho_\bp(x)dx=\frac12.$
%\end{equation*}
\end{proof}

\begin{proof}[Lemma~\ref{lem:2}]
Let $r, \De_0$ and $K$ be as in the statement of the lemma; $N_0$ will be chosen later in the proof. 
Fix $\De\le\De_0$, $N\ge N_0$ and $\bp \in \calS_n$ satisfying~\eqref{eq:BE} with $n= N/2 -r$ and such that $\|\rho_\bp-\rho\|\le K/N$.

Note that for $\De=-\infty$, $\mathsf T$ is equal to $\mathbb I/2$, and the result is trivial.
We may therefore assume $\De\in(-\infty,\De_0]$. 

Write $A$ for $d(\mathbb I-\mathsf T)(\De,\bp)$, the differential of $\mathbb I-\mathsf T$ in $\bp$ at the point $(\De,\bp)$ fixed above.
Recall that we see $A$ as an automorphism of $\bbR^n_\sym$.
We will regard it as a square matrix of size $\lfloor n/2\rfloor$, 
when written in the basis $(e_j - e_{n+1-j})_{1\leq j\leq \lfloor n/2 \rfloor}$ of $\bbR^n_\sym$, 
where $(e_j)_{1\leq j \leq n}$ is the canonical basis of $\bbR^n$. 
We may write $A$ explicitly: 
\begin{align*}
	A_{jk} 
	= \frac{\pd\big[(\mathbb I-\mathsf T)(\De,\bp)\big]_j}{\pd p_k} - \frac{\pd\big[(\mathbb I-\mathsf T)(\De,\bp)\big]_j}{\pd p_{n+1-k}}
	=
	\begin{cases}
		\displaystyle1 + \tfrac{1}{N}\sum_{\ell \neq j} \partial_1\Theta (p_j,p_\ell) - \tfrac1N\partial_2\Theta (p_j,-p_j) & \text{ if $j = k$},\vspace{3pt}\\
		\frac1N\big[\partial_2\Theta (p_j,p_k) - \partial_2\Theta (p_j, - p_k)] & \text{ if $j \neq k$},
	\end{cases}
\end{align*}
for $1 \leq j,k\leq n/2$. For the second equality, we have used $p_{n+1-k} = -p_{k}$.

Also, write $B$ for the diagonal matrix of size $\lfloor n/2 \rfloor$, with entries $N(p_{j+1} - p_j) =\rho_\bp(p_j)^{-1}$ on the diagonal. 
Rather than proving that $A$ is invertible, we will prove that $\tilde A = AB$ is invertible, by showing that it is diagonally dominated -- i.e. $\tilde{A}_{ii} > \sum_{j \neq i} \tilde{A}_{ij}$ for every $i$. 

Below, the notation $O(\cdot)$ is considered uniform in $\De < \De_0$ and $j$, but may depend on the fixed constants $K$ and $r$. 
Due to the condition  $\|\rho_\bp-\rho\|\le K/N$, we may write $p_{j+1} - p_j = O(1/N)$. 
Finally, we will use that the functions $\rho$ and $\Theta$ and their derivatives are uniformly bounded for $\De \leq \De_0$
(provided that $N$ is large enough).

The diagonal terms of $\tilde A$ are 
\begin{align}\label{eq:rrc}
	\tilde  A_{jj} 
	&=\frac{1}{\rho_\bp(p_j)} \Big( 1 + \frac{1}{N}\sum_{k\neq j} \partial_1\Theta (p_j,p_k)\Big)+ O\big(\tfrac{1}{N}\big)\\
	&= \frac{1}{\rho_\bp(p_j)}  \Big( 1 + \int_{-\pi}^{\pi} \pd_1 \Theta (x,y) \rho(y)\Big) + O\big(\tfrac{1}{N}\big)
	\stackrel{\eqref{eq:RhoDef}}= \frac{2 \pi \rho(p_j)}{\rho_\bp(p_j)} + O\big(\tfrac{1}{N}\big) \nonumber
	= 2\pi + O\big(\tfrac{1}{N}\big).
%= 2 \pi {\mathsf T}_c(\rho)(x) = 2\pi \rho(p_j)
%2 \pi {\mathsf T}_c(f)(x) = 1 +  \int_{-\pi}^{\pi} \pd_1 \Theta (x,y) f(y).
%=(p_{j+1} - p_j) + \int_{-\pi}^{\pi} \Theta (p_j,p_k) 
%.
\end{align}
For the second equality\footnote{The equality is obtained by a simple computation similar to that of the proof of Theorem~\ref{thm:2} below. We omit the details here.}, 
we used $\|\rho_\bp-\rho\|\le K/N$.  We further note that the final equality follows thanks to the fact that $m_\rho > 0$.

We now compute the off-diagonal terms of $\tilde A$. 
For $x,y \in [-\pi,\pi]$, write $G(x,y) := \Theta(x,y) - \Theta(-x,y)$.
A direct computation shows that $G(x,y)$ is increasing in $y$ when both $x$ and $y$ are in $[-\pi,0]$.
For $1 \leq j \neq k \leq n/2$, since $\Theta(x,-y) = -\Theta(-x,y)$, we have
\begin{align*}
	\tilde A_{jk} 
	=(p_{k+1} - p_k)\big[\partial_2\Theta (p_j,p_k) - \partial_2\Theta (-p_j, p_k)\big] 
	=(p_{k+1} - p_k)\partial_2G (p_j,p_k) 	
	\geq 0.
\end{align*}
Therefore, for any fixed $1 \leq j \leq n/2$,    
\begin{align}
	 \sum_{k \neq j} |\tilde A_{jk} |
	 = \sum_{k \neq j} \tilde A_{jk} 
	 & = \sum_{k =1}^{\lfloor n/2 \rfloor }(p_{k+1} - p_k) \pd_2G (p_j,p_k)  +O\big(\tfrac1N\big) \nonumber\\
	 & = G (p_j,0) - G(p_j,-\pi)+ O\big(\tfrac1N\big).
	 \label{eq:afd}
\end{align}
%Define $G(x,y):= \Theta(x,y)-\Theta(-x,y)$ to obtain 
%\begin{align}\label{eq:afd}
% 	\sum_{k \neq j} |\tilde A_{jk} |  = G(p_j,0) - G(p_j,-\pi)  + O\big(\tfrac1N\big).
%\end{align}
A straightforward calculus exercise can show that, for any $\Delta < \Delta_0$, the function $G(x,0)-G(x,-\pi)$ satisfies 
\begin{align*}
	G(x,0)-G(x,-\pi)\le 4 \arctan\Big(\frac{1}{2 |\Delta_0| \sqrt{\Delta_0^2 -1}}\Big)<2\pi, \qquad \forall x \in [-\pi,0].
\end{align*}
In conclusion,~\eqref{eq:rrc} and~\eqref{eq:afd} show that for $N$ large enough (depending on $\De_0, r$ and $K$ only), $\tilde A$ is diagonal dominant and therefore invertible. 
\end{proof}

\subsection{The asymptotic behaviour of the solutions to the Bethe equations}

This section is devoted to two results that control the asymptotic behaviour of solutions to the Bethe equations when $\rho_\bp$ is close to $\rho$. 
The first deals with the ``first order'' asymptotics of solutions to~\eqref{eq:BE} with $n = N/2 - r$, for fixed $r$.

\begin{theorem}\label{thm:2}
	Fix $\De<-1$ and $r \geq 0$. 
	Consider a family of $\bp(N)\in \calS_{N/2-r}$ for $N$ even large enough 
	satisfying $\|\rho_{\bp(N)}-\rho\|\longrightarrow0$. 
	Then, $\mu_N:=\frac1{N}\sum_{i=1}^n\delta_{p_i(N)}$
	converges weakly to $\rho(x)dx$, where $dx$ is Lebesgue's measure on $[-\pi,\pi]$. 
\end{theorem}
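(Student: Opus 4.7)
The plan is to test $\mu_N$ against an arbitrary continuous function $f:[-\pi,\pi]\to\bbR$ and show $\int f\,d\mu_N \to \int f\rho\,dx$, using the piecewise-constant density $\rho_{\bp(N)}$ as a bridge between the discrete and continuous pictures. First, I would observe that the hypothesis $\|\rho_{\bp(N)} - \rho\| \to 0$ is equivalent to sup-norm convergence (since $k'$ is bounded above and away from zero on $[-\pi,\pi]$), and that the strict positivity of $\rho$ from Proposition \ref{prop:rhoProp} then yields $\rho_{\bp(N)} \ge m_\rho/2$ on $[-\pi,\pi]$ for all $N$ large enough, where $m_\rho := \inf_{[-\pi,\pi]} \rho > 0$. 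Reading the definition \eqref{eq:add} backwards as $p_{j+1}-p_j = (I_{j+1}-I_j)/(N\rho_{\bp(N)}(p_j))$, together with $|I_{j+1}-I_j| \le 2r+1$, then gives the key uniform bound $\max_{0\le j\le n}(p_{j+1}-p_j) = O(1/N)$.

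Next, I would compare $\int f\,d\mu_N$ to $\int f\rho_{\bp(N)}\,dx$. Unpacking the piecewise-constant structure via $\rho_{\bp(N)}(p_j)(p_{j+1}-p_j) = (I_{j+1}-I_j)/N$, one obtains
\[
\int_{-\pi}^{\pi} f(x)\rho_{\bp(N)}(x)\,dx \;=\; \frac{1}{N}\sum_{j=1}^{n-1} \bar f_j \;+\; R_N,
\]
where $\bar f_j$ denotes the average of $f$ over $[p_j,p_{j+1}]$, and $R_N$ collects the contribution of the two boundary intervals $[-\pi,p_1)$ and $[p_n,\pi)$ (on which $I_{j+1}-I_j = 2r+1$ instead of $1$). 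Since $f$ is bounded, $|R_N| \le (2r+1)\|f\|_\infty/N = O(1/N)$. The uniform continuity of $f$ combined with the gap bound $p_{j+1}-p_j = O(1/N)$ yields $|\bar f_j - f(p_j)| \to 0$ uniformly in $j$; averaging over $n \le N/2$ terms then gives $\int f\rho_{\bp(N)}\,dx = \int f\,d\mu_N + o(1)$.

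Finally, the uniform convergence $\rho_{\bp(N)} \to \rho$ together with boundedness of $f$ implies $\int f\rho_{\bp(N)}\,dx \to \int f\rho\,dx$ by dominated convergence, which combined with the previous step concludes the proof. No step presents a real obstacle; the only care required is the bookkeeping around the two boundary intervals, where $I_{j+1}-I_j = 2r+1$ rather than $1$, but for any fixed $r$ these contribute only $O(1/N)$ and are harmless in the limit.
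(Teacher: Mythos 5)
Your argument is correct and follows essentially the same route as the paper: both compare $\mu_N$ to the measure $\rho_{\bp(N)}(x)dx$ using the fact that $\rho_{\bp(N)}$ integrates to $(I_{j+1}-I_j)/N$ over each gap $[p_j,p_{j+1})$ (the paper via a step function $g_{\bp(N)}$, you via interval averages $\bar f_j$ — equivalent bookkeeping), control the gaps by $O(1/N)$ exactly as in~\eqref{eq:cd}, treat the two boundary intervals as an $O(1/N)$ error, and then pass from $\rho_{\bp(N)}$ to $\rho$ by the uniform convergence implied by $\|\rho_{\bp(N)}-\rho\|\to 0$. A small point in your favour: you use only the hypothesis $\|\rho_{\bp(N)}-\rho\|\to 0$, whereas the paper cites \eqref{eq:regularity}--\eqref{eq:cd}, which were stated under the stronger bound $\|\rho_{\bp}-\rho\|\le K/N$.
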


\begin{proof}
Fix $\De<-1$, $r \geq 0$ and set $n=N/2-r$. For any continuous function $g$ on $[-\pi,\pi]$ and $N \geq2r$, define $g_{\bp(N)}:[-\pi,\pi] \to \bbR$ by 
$g_{\bp(N)}(t):=g(p_j)$ if $t\in[p_j,p_{j+1})$ for some $0\le j\le n$ (where we extend $g$ periodically whenever needed).
Then 
$$\int_{-\pi}^\pi g(x)d\mu_N(x) = \frac1N\sum_{j=1}^n g(p_j) = \int_{-\pi}^\pi g_{\bp(N)}(x)\rho_{\bp(N)}(x)dx  + \frac{g(p_n) -g(p_1)}{2N} ,$$ 
and we find
\begin{align*}
& \int_{-\pi}^\pi g(x)\rho (x)dx-\int_{-\pi}^\pi g(x)d\mu_N(x) \\
= &
\int_{-\pi}^\pi g(x)\big[\rho (x)-\rho_{\bp(N)}(x)\big]dx
+\int_{-\pi}^\pi\big[g(x)-g_{\bp(N)}(x)\big]\rho_{\bp(N)}(x)dx + \frac{ g(p_n)-  g(p_1)}{2N} ,
\end{align*}
% where for each $t\in[-\pi,\pi]$, $g_\bp(t):=g(p_j)$ if $t\in[p_j,p_{j+1})$ for some $0\le j\le n$. 
Then,~\eqref{eq:regularity} and~\eqref{eq:cd} imply that each integral above converges to $0$, and the result follows.
% (in particular we use that $p_n$ and $p_0$ are within distance $2r/(m_\rho N)$ of $\pi$ and $-\pi$ respectively).
\end{proof}

The second result deals with the displacement of the solutions to the Bethe equations with $N$ and $n=N/2-r$ with respect to the solution with $N$ and $n=N/2$. 
Fix $r > 0$ and write henceforth $n = N/2 - r$.
For $\bp = (p_1,\dots,p_{N/2}) \in \calS_{N/2}$ and $\tilde \bp = (\tilde p_1,\dots,\tilde p_{n})\in\calS_n$, 
introduce the {\em offset displacement} $\varepsilon=\varepsilon(\bp,\tilde\bp)\in\bbR^n$ defined for $1\le j\le n$ by 
\begin{equation}\label{eq:yy}
\varepsilon_j=\begin{cases}\displaystyle N\big(\tilde p_{j}-p_{j+r/2}\big) &\text{ if $r$ is even,}\smallskip\\
\displaystyle N\Big(\tilde p_{j}-\frac{p_{j-(r-1)/2}+p_{j-(r+1)/2}}2\Big)&\text{ if $r$ is odd} \end{cases}\end{equation}
and the {\em offset function} 
$f_{\bp,\tilde\bp}(t):=\varepsilon_j\text{ if $t\in[ \tilde p_j, \tilde p_{j+1})$ for some $0\le j\le n$}.$

\begin{remark}
The difference of index in~\eqref{eq:yy} between $\tilde\bp$ and $\bp$ is made in such a way that the indices coincide when ``starting from the middle of the interval $[-\pi,\pi]$''. %, or equivalently that the $I_j$ are the same.
\end{remark}

\begin{remark}
Consider $\bp$ and $\tilde\bp$ given by Theorem~\ref{thm:1} for $r$ and $r+1$. Then, the solutions may be proved to be interlaced%
\footnote{The strategy is to show that the property of being interlaced is true for $\De=-\infty$ (this is a straightforward computation) and that this property does not cease to be true when increasing $\De$ continuously. 
Namely, one can prove that for any $\De<-1$, it is not possible that $p_j\le \tilde p_j\le p_{j+1}$ for every $1\le j<n$ and $\tilde p_k$ be equal to $p_k$ or $p_{k+1}$ for some $1\le k<n$. 
This is based on the fact that $\Theta(x,0)\in(-\pi,\pi)$ for any $x\in(-\pi,\pi)$, and that $G(x,y)=\Theta(x,y)-\Theta(-x,y)$ defined on $[-\pi,0]^2$ is decreasing in the first variable and increasing in the second one. The continuity of $\De \mapsto \bp,\tilde\bp$ is then used to conclude. 
We leave the details of the computation to the reader.}% 
, in the sense that $p_j<\tilde p_j<p_{j+1}$ for any $1\le j<n$. We will not use this property later, but this may be useful in subsequent works.
\end{remark}

While the asymptotic behaviour of individual solutions $\bp$ is described by the continuous Bethe Equation, 
that of the offset displacement is governed by the Offset Equation, as shown in the next theorem.
\begin{theorem}\label{thm:3}
Fix $\De<-1$ and $r\ge0$. Consider two families $\bp(N) \in \calS_{N/2}$ and $\tilde\bp(N)\in \calS_{N/2-r}$ 
of solutions to the Bethe equations with parameters $\De$ and $N$ even sufficiently large.
If $\|\rho_{\bp(N)}-\rho\|=O(\tfrac1N)$ and $\|\rho_{\tilde\bp(N)}-\rho\|=O(\tfrac1N)$, then 
\begin{enumerate}
	\item $\rho\cdot f_{\bp(N),\tilde\bp(N)}\text{ converges uniformly on $[-\pi,\pi]$ to }r\cdot \tau.$
	\item There exists $C>0$ such that $|f_{\bp(N),\tilde\bp(N)}(x)|\le C|x|+O(1/N)$ for all $N$ and $x \in [-\pi,\pi]$.
\end{enumerate}
 \end{theorem}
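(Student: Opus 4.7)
The plan is to derive a discrete analogue of~\eqref{eq:VarsigmaDef} satisfied by the step function $f := f_{\bp(N),\tilde\bp(N)}$, and then to apply the contraction and uniqueness arguments from Section~2.1. I present the argument for $r$ even; the odd case follows the same lines with a two-point average in place of the single index $p_{j+r/2}$.

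Begin by subtracting the Bethe equation for $\tilde p_j$ from that for $p_{j+r/2}$: a direct check shows $\tilde I_j = I_{j+r/2}$, so
$$\varepsilon_j = \sum_{k=1}^{N/2}\Theta(p_{j+r/2},p_k) - \sum_{k=1}^{\tilde n}\Theta(\tilde p_j,\tilde p_k).$$
Set $x = \tilde p_j$. First, use the Mean Value Theorem to replace $p_{j+r/2}$ by $x$ in the first slot; this transforms part of the sum into $-(\varepsilon_j/N)\sum_k\pd_1\Theta(x,p_k) + O(1/N)$. A Riemann approximation via Theorem~\ref{thm:2} combined with~\eqref{eq:RhoDef} converts this to $-(2\pi\rho(x)-1)\varepsilon_j + O(1/N)$. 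Next, pair each $p_{k+r/2}$ (for $k=1,\dots,\tilde n$) with $\tilde p_k$, leaving $r/2$ unpaired $p_\ell$'s clustered at each endpoint of $[-\pi,\pi]$. Theorem~\ref{thm:1} guarantees these unpaired entries converge to $\pm\pi$ at rate $O(1/N)$, so their contribution is $\tfrac{r}{2}[\Theta(x,-\pi)+\Theta(x,\pi)] + O(1/N)$. The paired differences Taylor-expand into $-\tfrac1N\sum_k\pd_2\Theta(x,\tilde p_k)\varepsilon_k + O(1/N)$, which by another Riemann approximation converges to $-\int_{-\pi}^\pi\pd_2\Theta(x,y)f(y)\rho(y)\,dy$. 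Collecting the pieces and setting $g := \rho f$, one arrives at
\begin{equation*}
2\pi g(x) = \tfrac{r}{2}\bigl[\Theta(x,-\pi)+\Theta(x,\pi)\bigr] - \int_{-\pi}^{\pi}\pd_2\Theta(x,y)\,g(y)\,dy + \eta_N(x),
\end{equation*}
where the error $\eta_N$ is $O(1/N)$ uniformly, provided $\|f\|_\infty$ stays bounded.

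With this identity in hand, the change of variables $x=k(\alpha)$ from the proof of Proposition~\ref{prop:rhoProp} turns the display into a fixed-point problem for $g/r$ perturbed by $\eta_N/r$, with $\tau$ as the unperturbed fixed point. The estimate~\eqref{eq:art} shows that the associated integral operator is $(1-m_\Xi)$-contractive in the sup-norm, so inverting it simultaneously yields a uniform a priori bound on $\|g\|_\infty$ (bootstrapping the control of $\eta_N$) and shows that $g/r \to \tau$ uniformly, proving item~(1). For item~(2), I would exploit the symmetries $p_{N/2+1-k} = -p_k$ and $\tilde p_{\tilde n+1-k} = -\tilde p_k$, which force $\varepsilon_{\tilde n + 1 - j} = -\varepsilon_j$, so that $f$ is odd up to its step at the origin. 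Together with the identity $\Theta(0,\pi)+\Theta(0,-\pi)=0$ (the right-hand side of the displayed equation then vanishes at $x=0$) and the Lipschitz character of the integral kernel, one can rerun the contraction argument in the weighted norm $\sup_x |f(x)|/(|x|+1/N)$ to conclude $|f(x)|\le C|x|+O(1/N)$.

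The principal obstacle is ensuring that $\eta_N$ is uniform in $x$ in a way compatible with the weighted norm required for item~(2). The unpaired boundary terms $\tfrac{r}{2}[\Theta(x,-\pi)+\Theta(x,\pi)]$ arise from $p_\ell$'s sitting in an $O(1/N)$-window of $\pm\pi$, where $\Theta$ is only continuous (not smooth) in the second variable; approximating them cleanly requires combining the regularity bound from Theorem~\ref{thm:1} with a careful symmetric pairing of the discretization error at the two endpoints. Getting this approximation sharp enough to preserve the linear-in-$|x|$ growth at the origin — rather than merely establishing uniform boundedness of $f$ — is the technically delicate point of the proof.
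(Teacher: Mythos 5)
Your derivation of the discrete Offset Equation is the same computation the paper performs: subtract the Bethe equations for $\tilde p_j$ and $p_{j+r/2}$, isolate the $r$ boundary terms converging to $\Theta(x,\pm\pi)$, Taylor-expand the paired terms, and pass to the Riemann integral via Theorem~\ref{thm:2}. Where you diverge is in how you conclude: the paper first establishes the bound (item 2) directly from the increment estimates $|\varepsilon_{j+1}-\varepsilon_j|=O(1/N)$ (which follows from~\eqref{eq:regularity} and~\eqref{eq:cd}) together with the middle-symmetry $\varepsilon_{n/2}=-\varepsilon_{n/2+1}=O(1/N)$ and the lower bound $\tilde p_{j+1}-\tilde p_j \gtrsim 1/N$, then uses this equi-Lipschitz bound to invoke Arzel\`a--Ascoli, extracts a subsequential limit $f$, and identifies $\rho f = r\tau$ via uniqueness of the solution to~\eqref{eq:VarsigmaDef}. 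Your route of a perturbed fixed-point/contraction argument for item 1 is a genuine alternative and should work, but you must supply the a~priori bound $\|f_N\|_\infty = O(1)$ \emph{before} invoking the contraction, not as its output: the error $\eta_N$ contains the second-order Taylor remainder, which is $O(\|f_N\|_\infty^2/N)$, so the ``bootstrap'' as stated is circular unless you first record that summing the $O(1/N)$ increments from the middle gives $\|f_N\|_\infty=O(1)$ directly. That estimate is exactly the paper's~\eqref{eq:ll}--\eqref{eq:lll}, which you already have the ingredients for.

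The weighted-norm contraction you propose for item 2 is the real gap. The integral kernel $\Xi_{2\lambda}(\alpha-\beta)$ is a convolution that spreads mass from all of $[-\pi,\pi]$ to any point $\alpha$; in particular, if $|g(\beta)|\le M|\beta|$ then $\frac1{2\pi}\int\Xi_{2\lambda}(\alpha-\beta)g(\beta)\,d\beta$ is generically of order $M$ at $\alpha=0$, not $M|\alpha|$, so the operator does not contract in the norm $\sup_x |f(x)|/(|x|+1/N)$. Your observations that $f$ is approximately odd and that the forcing vanishes at the origin are the right ingredients to patch this (an odd $g$ does give $O(M|\alpha|)$ after pairing $\pm\beta$), but turning that into a strict contraction requires controlling a different constant than $1-m_\Xi$, and nothing in~\eqref{eq:art} supplies it. You should notice instead that item~2 does not need any fixed-point machinery: it is an immediate consequence of the discrete increment bounds. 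Since $|\varepsilon_{j+1}-\varepsilon_j|=O(1/N)$, $\varepsilon_{n/2}=O(1/N)$, and $\tilde p_{j+1}-\tilde p_j\ge c/N$, one simply sums from the middle out, and $|f_N(x)|\le C|x|+O(1/N)$ follows. Proving this first, as the paper does, is both simpler and what makes the Arzel\`a--Ascoli (or your contraction) step for item~1 available. A final minor point: your worry that $\Theta(\cdot,\pm\pi)$ is ``only continuous, not smooth'' is unfounded --- $\Theta$ is analytic on $[-\pi,\pi]^2$, and the denominator $e^{ix}+e^{-iy}-2\De$ never vanishes for $\De<-1$, so the boundary terms converge at rate $O(1/N)$ without any special care.
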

 
 The second property is slightly technical but will be useful when integrating functions against the empirical measure of the $\tilde{\bf p}(N)$ (see Section~\ref{sec:6V}).
 \begin{proof}
We  drop $N$ and $n=N/2-r$ from the notation in the computations, except that we set $f_N=f_{\bp(N),\tilde\bp(N)}$. We treat the case $r$ even and odd separately.
Below, all quantities $O(\cdot)$ may depend on $\De$ and $r$ but are uniform in $j =1,\dots, n$ and $x \in [-\pi,\pi]$.
 
 \paragraph{Case $r$ even.} 
 First, we bound the increments of $f_N$ and show that $f_N$ is almost equal to $0$ at the origin, so as to prove the second property. 
%By~\eqref{eq:regularity}, for $1 \leq j \leq n$, we have that
%\begin{equation}\label{eq:l}
%	\left|\varepsilon_j- \left(\frac{1}{\rho(p_{j+r/2})} - \frac{1}{\rho(\tilde{p}_j)} \right) \right| =O\big(\tfrac1N\big).
%\end{equation}
Equation~\eqref{eq:regularity} and the bound~\eqref{eq:cd} on the increments of $\bp$ and $\tilde\bp$ (both valid due to our assumptions)
imply that 
\begin{align}\label{eq:ll}
	|\varepsilon_{j+1}-\varepsilon_j|
	%&\leq N|\tilde p_{j+1} -\tilde p_j| + N| p_{j + r/2 +1} - p_{j+ r/2}|\\
	\stackrel{\eqref{eq:regularity}}\leq \Big|\frac{1}{\rho(p_{j+r/2+1})} - \frac{1}{\rho(p_{j+r/2})}\Big|
	+\Big|\frac{1}{\rho(\tilde p_{j+1})} - \frac{1}{\rho(\tilde p_{j})}\Big|+O\big(\tfrac1{N}\big)
	\stackrel{\eqref{eq:cd}}=O\big(\tfrac1N\big).
\end{align}
Now, by symmetry, $p_{N/4}=-p_{N/4+1}$ (recall that $p_{N/4}$ and $p_{N/4 +1}$ are the two elements of $\bp$ closest to the origin) and $\tilde{p}_{n/2} = - \tilde{p}_{n/2+1}$ so  
\begin{equation}\label{eq:lll}
	\varepsilon_{n/2}=-\varepsilon_{n/2+1}=O\big(\tfrac1N\big).
\end{equation}
Finally, observe that $\rho_{\tilde p}$ is bounded uniformly in $N$ 
(since it converges to $\rho$ in the norm $\| \cdot \|$, it also does in the uniform norm), 
and therefore $\tilde p_{j+1} - \tilde p_j > c/N$ for all $N$ and $j$, 
where $c > 0$ is some constant independent of $N$ and $j$. 
This implies the existence of $C>0$, independent of $N$ and $j$, such that 
\begin{align}\label{eq:f_N_control}
	|f_N(x)|\le C|x| +O\big(\tfrac1N\big) \qquad \text{ for all $x \in [-\pi,\pi]$.}
\end{align}

Let us now prove the first statement - that is, the convergence of $\rho f_N$. 
In light of~\eqref{eq:ll}, we may apply the Arzela-Ascoli theorem to the sequence $(f_N)$ to extract a sub-sequential limit $f$. 
It suffices to show that $\rho f=r \cdot \tau$ to conclude.

For $N$ and $1 \leq j \leq n$, the Bethe equations applied to $p_{j+r/2}$ and $\tilde p_j$ imply
\[
\varepsilon_j = \sum_{k=1}^{N/2} \Theta(p_{j+r/2},p_k) - \sum_{k=1}^{n} \Theta(\tilde{p}_{j},\tilde{p}_k).
\]
In the first sum, we Taylor expand the terms  $\Theta(p_{j+r/2},p_{k+r/2})$ at $(\tilde p_{j},\tilde p_{k})$ for any $1\leq  k \leq n$ (while leaving the remaining terms as they are). This gives 
\begin{align}\nonumber%\label{eq:123}
\varepsilon_j
= \underbrace{\sum_{k=1}^{r/2} \Theta(p_{j+r/2},p_k)+\Theta(p_{j+r/2},p_{n+1-k})}_{(1)} 
- \underbrace{\tfrac1N \sum_{k=1}^{n} \partial_1\Theta(\tilde p_{j},\tilde p_{k})\varepsilon_j}_{(2)}
-\underbrace{\tfrac1N \sum_{k=1}^n\partial_2 \Theta(\tilde p_{j},\tilde p_{k})\varepsilon_k}_{(3)}+ O\big(\tfrac1N\big).
\end{align}
The final term is due to the second order errors in the Taylor expansion; it is indeed $O\big(\tfrac1N\big)$, since it contains $O(N)$ terms of order $O\big(\frac1{N^2}\big)$. 

Fix $x \in [-\pi,\pi]$ and for each $N$ (along the subsequence for which $f_N$ tends to $f$) 
pick $\tilde p_j$ so that $x \in [\tilde p_j, \tilde p_{j+1})$. 
Then the equation displayed above offers an expression for $f_N(x)$. 
Taking $N$ to infinity, 
we find that  (1) converges to $\tfrac r2(\Theta(x,-\pi)+\Theta(x,\pi))$, 
and (2) and (3) converge to $(1-2\pi\rho(x))f(x)$ and $\int_{-\pi}^\pi \partial_2\Theta(x,y)f(y)\rho(y)dy$, respectively, 
by the definition of $f$ and the weak convergence of $\mu_N$ (defined in statement of Theorem~\ref{thm:2}).
Thus, 
$$
2 \pi f(x)\rho(x) = \tfrac r2\big(\Theta(x,-\pi) + \Theta(x,\pi)\big) - \int_{-\pi}^{\pi} \partial_2 \Theta(x,y) f(y)\rho(y) dy.
$$ 
It follows that $\frac1r f(x) \rho(x) =\tau(x)$ by the uniqueness of the solution to the Offset Equation~\eqref{eq:VarsigmaDef}.

\paragraph{Case $r$ odd.} The reasoning is similar. 
Equation~\eqref{eq:ll} may be obtained in the same way and~\eqref{eq:lll} may be replaced by  $\varepsilon_{(n+1)/2}=0$, which results from the symmetry of $\bp$ and $\tilde\bp$. One then expands around $(\tilde p_i,\tilde p_k)$ the expression
 \[
\sum_{k=1}^{n} \Theta(\tilde{p}_{j},\tilde{p}_k)-\tfrac12\big[\Theta(p_{j+(r-1)/2},p_k)+\Theta(p_{j+(r+1)/2},p_k)\big]
\]
to obtain the same result.
 \end{proof}

%\section{Continuum equations}
%
%For $\De<-1$, the two functional equations with variables $\rho,\varsigma\in L^2([-\pi,\pi])$ 
%\begin{align}\tag{cBE$_\De$}\label{eq:RhoDef}
%2 \pi \rho(x) &= 1 + \displaystyle \int_{-\pi}^{\pi} \pd_1 \Theta_{\Delta} (x,y) \rho(y) dy 		&\quad \forall x \in [-\pi,\pi],
%\\\label{eq:VarsigmaDef}\tag{cOE$_\De$}
%2 \pi \varsigma(x) &= \frac{\Theta(x,-\pi) + \Theta(x,\pi)}{2} - \int_{-\pi}^\pi \partial_2 \Theta_{\Delta}(x,y) \varsigma(y) dy&\quad \forall x \in [-\pi,\pi].
%\end{align}

\section{Proofs of the theorems}

\subsection{Perron-Frobenius eigenvalues of six-vertex model via Bethe Ansatz}\label{sec:6V_PF}

The goal of this section is to show that the Perron-Frobenius eigenvalue of $V^{[n]}$ is given by the Bethe Ansatz from the solution $\bp$ of~\eqref{eq:BE} given by Theorem~\ref{thm:1} (recall the choice $I_j=j-\frac{n+1}2$ for $1\le j\le n$ in the theorem). 
We start by recalling the Bethe Ansatz for the transfer matrix of the six-vertex model.
A more detailed discussion (with references) and an expository proof may be found in the companion paper \cite{BetheAnsatz1}.

Recall that $\Delta = (2 - c^2)/2$ and that the function $\Theta$ depends implicitly on $\De$. For $z\ne 1$, define
\begin{equation}\label{eq:LM}
	L(z):= 1 + \frac{c^2 z}{1-z} \qquad \text{and}\qquad
	M(z):= 1 - \frac{c^2}{1-z} \, . %\quad 
\end{equation}

\begin{theorem}[Bethe Ansatz for $V$]\label{thm:BA} 
	Fix $n \leq N/2$. 
	Let $(p_1, p_2, \dots, p_n) \in (-\pi,\pi)^n$ be distinct and satisfy the equations
	\begin{align}\tag{BE}\label{eq:BA}
		\exp\left(i N p_j\right) 
		= (-1)^{n-1} \exp \left( -i\sum_{k=1}^n \Theta(p_j,p_k) \right)  \quad \forall j \in \{1, 2, \dots, n\}. 
	\end{align}
	Then, 
	$\psi = \sum_{|\vec x|=n}\psi(\vec x)\, \Psi_{\vec x} \, ,$ where $\psi(\vec x)$ is given by
	$$\psi(\vec x) 
		:= \sum_{\sigma \in \mathfrak{S}_n} A_\sigma \prod_{k=1}^n \exp\left(i p_{\sigma(k)}x_k \right) \quad\text{where}\quad
		A_\sigma := \varepsilon(\sigma)\,  \prod_{1 \leq k < \ell \leq n} e^{ip_{\sigma(k)}}\  (e^{-\icomp p_{\sigma(k)}}+e^{\icomp p_{\sigma(\ell)}}-2\Delta),
	$$
	(for $\sigma$ an element of the symmetry group $\mathfrak{S}_n$) satisfies the equation $V\psi = \Lambda \psi$, where 
	\begin{align*}%\label{eq:BA_eigenvalue}
		\Lambda=\Lambda(\bp) := 		
		\begin{cases}
			\displaystyle \prod_{j=1}^n L(e^{ip_j}) + \prod_{j=1}^n M(e^{ip_j})\quad &\text{if $p_1,\dots,p_n$ are non zero,} \vspace{3pt}\\
			\displaystyle
			\Big[2+ c^2 (N-1) + c^2 \sum_{j\neq \ell} \partial_1 \Theta (0,p_{j}) \Big] \cdot \prod_{j \neq \ell} M(e^{ip_j})
			\quad &\text{if $p_\ell = 0$ for some $\ell$.}
		\end{cases}
	\end{align*}
	%For $|\vec x|=n$ and $\bp\in (-\pi,\pi)^n$,  set 
\end{theorem}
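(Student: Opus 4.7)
The plan is to verify the eigenvalue equation $V\psi = \Lambda\psi$ coefficient-by-coefficient. Fixing $\vec{y}$ with $n$ entries, I would compute $(V\psi)(\vec{y}) = \sum_{\vec{x}} V(\Psi_{\vec{y}}, \Psi_{\vec{x}})\psi(\vec{x})$ using that $V$ is supported on diagonal terms and on interlaced pairs. Substituting the plane-wave ansatz for $\psi(\vec{x})$ and swapping the order of summation, the inner sum over interlaced $\vec{x}$ with a fixed permutation $\sigma$ factorises into $n$ nearly-independent geometric sums over the positions $x_k$, each depending on $p_{\sigma(k)}$ only, via weights of the form $c^{|\{i :\Psi_{\vec{x}}(i)\ne\Psi_{\vec{y}}(i)\}|}$.

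The first step is to evaluate one such single-particle geometric sum. The interlacing constraint $y_k \le x_k \le y_{k+1}$ (or its dual) combined with $\sum_{x_k}e^{ip_{\sigma(k)}x_k}c^{|\cdots|}$ produces exactly the combination read off from the functions $L(z)$ and $M(z)$ in~\eqref{eq:LM}, evaluated at $z=e^{ip_{\sigma(k)}}$. This yields a clean ``bulk'' contribution $\big[\prod_j L(e^{ip_{\sigma(j)}}) + \prod_j M(e^{ip_{\sigma(j)}})\big]\prod_k e^{ip_{\sigma(k)}y_k}$, plus residual terms of two types: (i) adjacent-collision residues arising when two consecutive $x$-values would coincide or cross, and (ii) wrap-around boundary terms arising from the cyclic structure on $\mathbb{Z}/N\mathbb{Z}$.

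The design of the coefficients $A_\sigma$ is tailored to handle (i) and (ii) separately. For (i), the antisymmetry $\varepsilon(\sigma)$ together with the factor $\prod_{k<\ell}(e^{-ip_{\sigma(k)}}+e^{ip_{\sigma(\ell)}}-2\Delta)$ ensures that the ratio $A_\sigma/A_{\sigma\circ(k,k+1)}$ equals precisely the two-body scattering phase encoded by~\eqref{eq:Theta}; this phase matches with opposite sign the ratio of the two collision residues, so they cancel pairwise between $\sigma$ and $\sigma\circ(k,k+1)$. After this cancellation only the bulk contribution and the boundary terms remain. The boundary terms decouple into single-particle contributions, one for each $j$, each carrying a factor of the form $e^{iNp_j}-(-1)^{n-1}\exp(-i\sum_k\Theta(p_j,p_k))$. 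The Bethe equations~\eqref{eq:BA} are precisely the vanishing conditions for these factors, and their validity reduces $(V\psi)(\vec y)$ to $\Lambda\cdot\psi(\vec y)$ with $\Lambda=\prod_j L(e^{ip_j})+\prod_j M(e^{ip_j})$.

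For the degenerate case $p_\ell=0$, note that $L(1)$ and $M(1)$ both diverge with opposite residues, so the two products separately blow up while their sum stays finite. I would proceed by taking $p_\ell\to 0$ in the generic formula: expand $L(e^{ip_\ell})+M(e^{ip_\ell})$ to first order in $p_\ell$, then use the Bethe equation for $p_\ell$ (itself expanded to first order, which involves $N$ and $\sum_{j\ne\ell}\partial_1\Theta(0,p_j)$) to cancel the $1/p_\ell$ singularities, leaving exactly the stated expression $[2+c^2(N-1)+c^2\sum_{j\ne\ell}\partial_1\Theta(0,p_j)]\prod_{j\ne\ell}M(e^{ip_j})$. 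The main obstacle is the combinatorial bookkeeping of collision and boundary residues in the generic step; the algebra is elementary but lengthy, and the full reduction is carried out in the companion paper~\cite{BetheAnsatz1}, on which I would rely for the detailed identities.
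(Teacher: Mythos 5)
The paper gives no proof of this theorem itself: immediately before stating it, the authors write that ``an expository proof may be found in the companion paper \cite{BetheAnsatz1},'' and afterwards the theorem is used as a black box (the text moves directly to the question of whether $\psi\neq 0$). Your sketch is the standard coordinate Bethe Ansatz derivation -- plane-wave ansatz, factorisation of the interlacing sums into $L/M$ geometric series, cancellation of collision residues via the $\varepsilon(\sigma)$-weighted coefficients $A_\sigma$ (the ratio $A_{\sigma\circ(k,k+1)}/A_\sigma = -\exp\!\big(i\Theta(p_{\sigma(k)},p_{\sigma(k+1)})\big)$ is exactly the right scattering phase), cancellation of wrap-around terms by~\eqref{eq:BA}, and a $p_\ell\to 0$ limit for the degenerate eigenvalue -- which is precisely what \cite{BetheAnsatz1} carries out; since you likewise defer the algebraic bookkeeping to that reference, your approach agrees with the paper's treatment.
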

It is {\em a priori} unclear whether $\psi$ is non-zero, so that the previous theorem does not trivially imply that $\Lambda(\bp)$ is an eigenvalue of~$V$. 
It is also unclear whether solutions of~\eqref{eq:BA} exist. Nonetheless, any solutions of~\eqref{eq:BE} do also satisfy~\eqref{eq:BA}. In particular, Theorem~\ref{thm:1} provides us with a family of solutions to~\eqref{eq:BE}, and our goal is to prove that the corresponding value $\La$ given by the theorem above is the Perron-Frobenius eigenvalue of $V^{[n]}$. 
 
Below, we will view $V^{[n]}$ as a function of $\Delta$, hence we write it $V^{[n]}_\De$. 
We begin by computing the asymptotic of the Perron-Frobenius eigenvalue of $V^{[n]}_\De$ when $\De$ tends to $-\infty$.

\begin{lemma}\label{lem:PFAtInfinity}
	Fix $r \geq 0$ and $N > 2r$  an even integer. Set $n = N/2 -r$. 
	Then the largest eigenvalue $\la$ of the matrix 
	\[
	V^{[n]}_\infty := \lim_{\Delta \rightarrow -\infty} \frac{V^{[n]}_\Delta}{(-2\Delta)^n}
	\]
	is simple and satisfies
	\begin{align}\label{eq:PFAtInfinity}
		\la  \leq 2^r \prod_{j=0}^{r-1} \Big[1 + \cos \Big(\frac{\pi(2j+1)}{n+2r}\Big)\Big],
	\end{align}
	where the empty product is set to 1.
\end{lemma}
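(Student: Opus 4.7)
The plan is to identify $V^{[n]}_\infty$ explicitly, then match its largest eigenvalue with the Bethe-Ansatz value produced at $\Delta=-\infty$ by the solution $\bp_{-\infty}$ of Theorem~\ref{thm:1}. First I would compute $V^{[n]}_\infty$: the nonzero off-diagonal entries of $V^{[n]}_\Delta$ equal $c^{k}$ with $k:=|\{i:\Psi_{\vec x}(i)\neq\Psi_{\vec y}(i)\}|\leq 2n$, and since $-2\Delta=c^2-2\sim c^2$, the rescaling $V^{[n]}_\Delta/(-2\Delta)^n$ suppresses all entries except those with maximal $k=2n$. Maximality forces $\vec x\cap\vec y=\emptyset$, which combined with the interlacing condition gives $x_1<y_1<x_2<\dots<y_n$ or its reverse. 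Hence $V^{[n]}_\infty(\Psi_{\vec x},\Psi_{\vec y})=1$ on disjoint strictly interlaced pairs and $0$ elsewhere.

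At $\Delta=-\infty$ one has $\Theta(x,y)=y-x$, and~\eqref{eq:BA} reduces to $e^{\icomp(N-n)p_j}=(-1)^{n-1}e^{-\icomp\sum_k p_k}$, forcing $z_j:=e^{\icomp p_j}$ to be $n$ distinct $(N-n)$-th roots of a common phase. Inserting the solution of Theorem~\ref{thm:1} with $I_j=j-(n+1)/2$ gives $p_j=2\pi(j-(n+1)/2)/(N-n)$ and $\prod_j z_j=1$. Using $L(z)\sim c^2z/(1-z)$ and $M(z)\sim -c^2/(1-z)$, dividing by $(-2\Delta)^n$ and passing to the limit turns the formula of Theorem~\ref{thm:BA} into
\[
\lambda_*\;=\;\frac{1+(-1)^n}{\prod_{j=1}^n(1-z_j)}
\]
when $n$ is even; the degenerate case ($n$ odd, $z_{(n+1)/2}=1$) follows the same way from the second formula of Theorem~\ref{thm:BA}, noting $\partial_1\Theta=-1$. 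Pairing conjugate $z_j$'s into $2-2\cos$ factors and applying the classical cyclotomic identity $\prod_{k=1}^{m/2}\cos^2((2k-1)\pi/(2m))=2^{1-m}$ (with $m=N-n=n+2r$) then rewrites $\lambda_*$ as exactly $2^r\prod_{j=0}^{r-1}[1+\cos(\pi(2j+1)/(n+2r))]$.

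Finally I would show that $\lambda_*$ is the largest eigenvalue of $V^{[n]}_\infty$ and that it is simple. Since $V^{[n]}_\infty$ is symmetric with non-negative entries, its spectral radius is attained on one strongly connected component of its support graph. The Bethe-Ansatz eigenvector $\psi$ produced above can be shown to be non-zero by evaluating it on the ``most spread'' configuration $\vec x_0=(1,3,\dots,2n-1)$: substituting the leading-order behaviour of $A_\sigma$ collapses the antisymmetrized sum in Theorem~\ref{thm:BA} to a Vandermonde-type determinant $\prod_{j<k}(e^{\icomp p_k}-e^{\icomp p_j})$, which is nonzero because the $p_j$'s are distinct. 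Showing in addition that $\psi$ lies in the unique dominant strongly connected component then yields, via Perron--Frobenius on that block, both simplicity and the desired inequality $\lambda_{\max}\leq\lambda_*$; the latter also follows from the fact that any competing choice of Bethe integers $(I_j')$ places the corresponding $z_j'$'s farther from $1$, so $|\prod(1-z_j')|\geq|\prod(1-z_j)|$.

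The main obstacles are the cyclotomic simplification (elementary but requiring careful parity bookkeeping in $n$ and $r$) and the identification of the dominant component, which is delicate because $V^{[n]}_\infty$ is reducible in general (e.g.~$\vec x=(1,2,\dots,n)$ is an isolated vertex with no neighbours). A cleaner alternative that bypasses the irreducibility issue is to diagonalize $V^{[n]}_\infty$ via a free-fermion (Jordan--Wigner) representation on the $2r$ ``holes,'' which expresses the spectrum as products of $2\cos(\pi(2j+1)/(2m))$ over $r$-element subsets of single-particle modes and recovers~\eqref{eq:PFAtInfinity} with equality, together with simplicity, in a single stroke.
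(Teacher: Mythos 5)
Your identification of $V^{[n]}_\infty$ (non-zero entries only on disjoint interlaced pairs, equivalently configurations with no two consecutive up-arrows) is correct, and it matches the opening step of the paper's proof. But the heart of your argument — proving $\lambda_{\max}\leq\lambda_*$ — inverts the logical structure of the paper and in doing so introduces a real gap. The lemma is designed to provide an upper bound on the largest eigenvalue of $V^{[n]}_\infty$ that is \emph{independent} of the Bethe Ansatz. In the paper, this is done via a purely combinatorial route: $\Tr\bigl([V^{[n]}_\infty]^M\bigr)$ is identified with a count $Z(M)$ of families of $2r$ non-intersecting cyclic walks (the ``defects''), a shift turns these into vertex-disjoint walks, Fulmek's determinant formula gives $Z(M;2,4,\dots,2r)$ exactly, and a careful analysis of which Leibniz terms survive yields $\lambda_{\max}=\lim_M Z(M)^{1/M}\leq$ RHS of~\eqref{eq:PFAtInfinity}. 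None of this is in your proposal. Instead, you construct the Bethe eigenvalue $\lambda_*$ at $\De=-\infty$ and then try to argue $\lambda_{\max}\leq\lambda_*$ by (i) ``showing $\psi$ lies in the unique dominant strongly connected component'', or (ii) observing that ``any competing choice of Bethe integers places the $z_j'$ farther from $1$.'' Route (ii) implicitly assumes that every eigenvalue of $V^{[n]}_\infty$ arises from some choice of Bethe integers — i.e.\ completeness of the Bethe Ansatz, which is precisely what this paper is structured to avoid (see the discussion around Remark~\ref{rmk:gold}). Route (i) needs the Bethe eigenvector to be the Perron eigenvector of the irreducible block, which in practice requires sign-positivity of all its components — a statement neither you nor the paper proves. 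Once the lemma's upper bound is in hand, the paper's Corollary~\ref{cor:a} produces the Bethe eigenvalue matching the bound, and only then can one conclude it is the largest; this is the work your paragraphs two and three rightly belong to, not the proof of the lemma itself.

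Two smaller remarks. First, the ``irreducibility issue'' you flag (e.g.\ $\vec x=(1,\dots,n)$ being an isolated vertex) is resolved cleanly in the paper: $V^{[n]}_\infty$ is supported entirely on $\tilde\Omega_n$ (the span of defect configurations), where it is irreducible of period $2$; the orthogonal complement contributes only zero eigenvalues, so Perron--Frobenius for irreducible symmetric matrices applies directly and gives simplicity of $\lambda_{\max}$. There is no competition among strongly connected components to worry about. Second, your ``free-fermion / Jordan--Wigner'' alternative is indeed in the same spirit as the Fulmek/Lindstr\"om--Gessel--Viennot approach the paper uses, and if fleshed out it could conceivably yield equality rather than the inequality; but as written it is a one-sentence claim, not a proof, and the paper's authors deliberately settle for the inequality because identifying the surviving maximal term in the determinant expansion is delicate. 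To repair your proof you would need to supply the combinatorial counting argument in full, or else rigorously establish either sign-positivity of the Bethe eigenvector on $\tilde\Omega_n$ or completeness of the Bethe Ansatz for $V^{[n]}_\infty$, both of which are substantially harder than the lemma as stated.
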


\begin{remark}
	The matrix $V^{[n]}_\infty$ is symmetric and thus all its eigenvalues are real; its largest eigenvalue is therefore well-defined. It is not a Perron-Frobenius matrix, and thus we cannot be sure {\em a priori} that the largest eigenvalue is simple and largest in absolute value. We further note that the largest eigenvalue of $V^{[n]}_\infty$ is actually equal to the RHS of~\eqref{eq:PFAtInfinity},	as will be shown in the proof of Corollary~\ref{cor:a} below. 
%	The condition of divisibility for $N$ above (and in Corollary~\ref{cor:a} below) is not essential; 
%	however it allows us to use a previous result of \cite{Fulmek} without additional modifications and does not limit future arguments. 
%	We choose to keep it for comfort. 
\end{remark}

\begin{proof}
Fix $N\ge2r$ and $n = N/2 -r$. 
For two distinct configurations $\Psi_{\vec x}$ and $\Psi_{\vec y}$ in $\Om_n$\footnote{Recall that $\Om_n$ is the vector space generated by the $\Psi_{\vec x}$, where $\vec x$ has $n$ entries.}, recall that $V^{[n]}_\De(\Psi_{\vec x}, \Psi_{\vec y})$ 
is non-zero only when $\Psi_{\vec x}$ and $\Psi_{\vec x}$ are interlacing, and in this case it is equal to 
$$ c^{ | \{ i :\, \Psi_{\vec{x}}(i) \neq \Psi_{\vec{y}}(i)\}|} = (2 -2 \De)^{\frac12  | \{ i :\, \Psi_{\vec{x}}(i) \neq \Psi_{\vec{y}}(i)\}|}.$$
Since $\Psi_{\vec x}, \Psi_{\vec y} \in \Om_n$, 
the number $ P(\Psi_{\vec x}, \Psi_{\vec y}) = | \{ i :\, \Psi_{\vec{x}}(i) \neq \Psi_{\vec{y}}(i)\}|$ is at most $2n$. 
The normalization $(-2 \Delta)^n$ is chosen to ensure that, for any pair of configurations $\vec x$ and $\vec y$ as above, 
\[
V^{[n]}_\infty(\Psi_{\vec x}, \Psi_{\vec y}) = 
\begin{cases} 1 \quad &  \text{if } P(\Psi_{\vec x}, \Psi_{\vec y}) = 2n, \\ 0 & \text{otherwise}.  \end{cases}
\]
%Furthermore, $V^{[n]}_\infty$ is {\em not} a Perron-Frobenius matrix.

If $\vec x$ and $\vec y$ are configurations as above with $V^{[n]}_\infty(\Psi_{\vec x}, \Psi_{\vec y}) =1$, 
then $\Psi_{\vec x}$ has no consecutive up-arrows (and by symmetry neither does $ \Psi_{\vec y}$).
Indeed, if we suppose that $\Psi_{\vec x}$ has at least two consecutive up-arrows, 
 then interlacement requires $\Psi_{\vec y}$ to have an up-arrow above at least one of the consecutive up-arrows of $\vec x$,
which induces $ P(\Psi_{\vec x}, \Psi_{\vec y}) < 2n$ and therefore $V^{[n]}_\infty(\Psi_{\vec x}, \Psi_{\vec y}) = 0$.
Thus, to study $V^{[n]}_\infty$, we may study its restriction to the set of configurations with no consecutive up-arrows. 

In the case $n=N/2$, there is only one pair of such configurations: the completely staggered configurations -- i.e. those with alternating up and down arrows. Hence, $V^{[N/2]}_\infty$ breaks down into a block-diagonal structure: a $2 \times 2$ block of the form $\left(\begin{array}{cc} 0 & 1 \\ 1 & 0\end{array} \right)$, and a $\left[{N \choose N/2} -2\right]$-dimensional block of 0's. The spectral structure of this matrix is very straightforward - there are simple eigenvalues at $\pm 1$, and all other eigenvalues are 0, as required.

For $n = N/2 - r$, the situation is more complicated, 
and a direct computation of the spectrum of $V_\infty^{[n]}$ is best avoided. However, we do have the tools to bound the dominant eigenvalue. 
 
The set of configurations with no consecutive up-arrows can be parameterized by the location of the $2r$ ``defects" -- i.e. coordinates $i$ with a down arrow preceded by another down arrow. By periodicity, we say that $\vec x$ has a defect at $1$ if $\vec x$ has a down arrow at $1$ and at $N$. 

It is straightforward to show that a configuration with $n$ up-arrows has no consecutive up-arrows if and only if there are exactly $2r$ defects whose parities alternate. Moreover, $\vec x$ and $\vec y$ are such that $V^{[n]}_\infty(\Psi_{\vec x}, \Psi_{\vec y}) = 1$ if and only if $\Psi_{\vec x} \neq \Psi_{\vec y}$ and the locations of the defects of $\vec y$ may be obtained from those of $\vec x$, by moving each defect by precisely one unit on the left or on the right (taken toroidally). 
Since the parity of the defects alternates in both states, no two defects can exchange positions between $\vec x$ and $\vec y$.
See Fig.~\ref{fig:defects} for an example. 

\begin{figure}
		\begin{center}
		\includegraphics[width=0.47\textwidth, page=1]{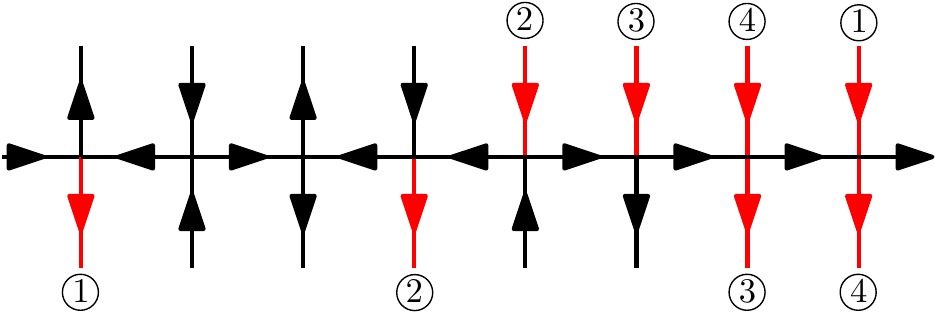}\qquad
		\includegraphics[width=0.47\textwidth, page=2]{defects.pdf}
		\caption{\emph{Left}: Two configurations $\vec x$ and $\vec y$ with $N=8$ and $r = 2$. 
		The defects are marked by red arrows and are numbered. Notice that each defect has moved by one unit when going from $\Psi_{\vec x}$ (below) to $\Psi_{\vec y}$ (above), but none have exchanged places. 
		\emph{Right}: The $2r$ paths corresponding to the evolutions of the defects.}
		\label{fig:defects}
		\end{center}
\end{figure}
Write $\tilde \Om_n$ for the subspace of $\Om_n$ generated by basis vectors $\Psi_{\vec x}$ with no two consecutive up arrows. 
Then, $V^{[n]}_\infty$ leaves this space stable, and we may consider its restriction to $\tilde \Om_n$. A straightforward computation shows that this matrix is irreducible, in the sense that, for any $\Psi_{\vec x}, \Psi_{\vec y} \in \tilde \Om_n$, there exists $K$ such that $[V^{[n]}_\infty ]^K (\Psi_{\vec x},\Psi_{\vec y}) >0$. As any symmetric irreducible matrix, it is either aperiodic or of period $2$; a more precise analysis can show that the latter occurs in the case of $V^{[n]}_\infty$.
% It actually has period 2, as $[V^{[n]}_\infty] (\Psi_{\vec x}, \Psi_{\vec x}) = 0$ and symmetric matrices are either aperiodic or of period 2. 
Thus, the Perron-Frobenius theorem for irreducible (but not aperiodic) matrices guarantees that the largest eigenvalue is simple and maximizes the absolute value; the {\em smallest} eigenvalue actually  has the same absolute value as the largest, unlike for true Perron-Frobenius matrices.

To determine $\la$, the largest eigenvalue, consider the following related construction.
Let $M$ be an even integer and $(a_1, \dots a_{2r})$ be an ordered set of integers between 1 and $N$ of alternating parity. Consider families of $2r$ paths on $\bbZ/N\bbZ$ denoted $\{X_j(t): 0\leq t \leq M;\, j = 1,\dots, 2r\}$  such that, for each $j$, $X_j(0) = X_j(M) = a_j$ and $|X_j(t+1) - X_j(t)| =1$ for $1 \leq t <M$. Additionally, impose that the paths $X_1,\dots, X_n$ are {\em non-intersecting}, in the sense that no pair of adjacent paths ever exchange position. 
Let $Z(M;a_1, \dots a_{2r})$ be the number of such paths, and $Z(M)$ the sum of $Z(M;a_1, \dots a_{2r})$ over all admissible $(a_1, \dots a_{2r})$. 
The discussion above indicates that 
%The definition of the process guarantees that the transition matrix of this process is $V^{[n]}_\infty$, and thus we deduce that 
\[
Z(M) = \Tr\left( [V^{[n]}_\infty]^M \right), 
\]
which in turn implies that the largest eigenvalue (in absolute value) of $V^{[n]}_\infty$ is given by 
$$ \la = \lim_{M \to \infty} Z(M)^{1/M}.$$

Families of non-intersecting paths as those appearing in the definition of $Z(M)$ have been studied before, in particular in the work of Fulmek \cite{Fulmek}, which enables us to compute the asymptotic of $Z(M)$ directly. 
Fulmek enumerates the number of vertex-avoiding paths (i.e. families of paths as above, but such that no two ever hit the same vertex, rather than not intersecting). Luckily, the two are closely related: consider the transformation of a set of paths $\{X_j(t):\, t,j\}$ as above to the set of paths $\{\tilde{X}_j(t): \, t,j\}$ on $\bbZ/(N+2r)\bbZ$, where 
$$\tilde{X}_j(t) =  X_j(t) + j, \qquad \forall 1 \leq t\leq M,\, 1\leq j\leq 2r.$$ 
One may check that this transformation induces a bijection between the set of non-intersecting paths starting and ending at $(a_1, \dots a_{2r})$ on $\bbZ/N\bbZ$ and that of vertex-avoiding paths starting and ending at $(a_1 +1, \dots a_{2r}+2r)$ on $\bbZ/(N+2r)\bbZ$. Note that, while vertex-avoiding paths are generally allowed to intersect, the parity constraints of the starting positions prevents them from doing so in this case (more precisely, observe that $\tilde{X}_{j+1} (t) - \tilde{X}_{j} (t)$ is even for all $t$ and $j$). 

Since we may get from any admissible starting position (that is, with even spacing between the starting points) 
to the position $(2,4,\dots,4r)$ in at most $N$ steps, the limit of interest to us may be computed as 
$$  \lim_{M\to \infty} Z(M)^{1/M} = \lim_{M\to \infty}Z(M; 2,4,\dots, 2r)^{1/M}.$$
We now state Corollary~7 of \cite{Fulmek}, 
which provides an exact expression of $Z(M; 2,4,\dots, 2r)$ as the determinant of a matrix of size $2r$:
\begin{align*}
Z(M; 2,4,\dots, 2r) 
= (N+2r)^{-2r}\det \left( \xi^{i-j}\sum_{\ell = 0}^{N+2r-1 } \xi^{2(i-j)\ell}\left[2\cos \left(\frac{\pi(2\ell + 1)}{N+2r} \right)\right]^M \right)_{1\leq i,j \leq 2r},
\end{align*}
where we set $\xi = e^{i\frac{2\pi}{N+2r}}$.
Since we are only interested in $\lim_{M \to \infty} Z(M; 2,4,\dots, 2r)^{1/M}$, we can simply study the dominating terms (as $M \to \infty$) in the Leibniz formula for the determinant on the right-hand side. 
However, the apparently maximal terms cancel out in the computation of the determinant, and some care is needed. 

To start, observe that the entries of the matrix above may be rewritten by grouping the terms $\ell$ and $\ell + N/2 + r$ (which are equal) 
as a sum with only half the terms: 
\begin{align*}
	2 \xi^{i-j}\sum_{\ell = 0}^{n+2r-1} \xi^{2(i-j)\ell}\Big[2\cos \Big(\frac{\pi(2\ell + 1)}{N+2r}\Big)\Big]^M.
\end{align*}
Then, we write the determinant out as
\begin{align*}
	(N+2r)^{2r} 2^{-2r(M+1)}Z(M; 2,4,\dots, 2r) = 
	\hspace{-8pt}\sum_{\substack{\si \in \mathfrak S_{2r}\\ 0 \leq \ell_1,\dots, \ell_{2r} < n+2r}}\hspace{-8pt}
	\eps(\si)\prod_{i=1}^{2r} \xi^{(i - \si(i))(2\ell_i+1)}\Big[\cos \Big( \frac{\pi(2\ell_i + 1)}{N+2r} \Big)\Big]^M.
\end{align*}
In the above, note that the term taken to the power $M$ does not depend on $\si$. 
We conclude that
\begin{align}\label{eq:term_to_maximise}
	\lim_{M\to \infty}Z(M)^{1/M} = 2^{2r}\prod_{i=1}^{2r}  \Big|\cos \Big( \frac{\pi(2\ell_i + 1)}{N+2r}\Big)\Big|,
\end{align}
where  $\ell_1,\dots, \ell_{2r} \in \{0,\dots,n+2r-1\}$ maximise the product above and are such that 
\begin{align}\label{eq:term_non_zero}
	\sum_{\si \in \mathfrak S_{2r}} \eps(\si)\prod_{i=1}^{2r} \xi^{(i - \si(i))(2\ell_i+1)} \neq 0.
\end{align} 
Consider $\ell_1,\dots, \ell_{2r}$ as above with $\ell_j = \ell_{j'}$ for some $j\neq j'$ and a permutation $\si$. 
Write $\tau_{j,j'}$ for the transposition of $j$ and $j'$. 
The sum of the terms corresponding to $\ell_1,\dots, \ell_{2r}$ with $\si$ and $\si \circ \tau_{j,j'}$ sum up to $0$, 
and we find that the term in~\eqref{eq:term_non_zero} is zero.

Thus, we may limit ourselves to terms with $\ell_1,\dots, \ell_{2r}$ all distinct. 
Among such sets, one maximising the product in~\eqref{eq:term_to_maximise} is given by $\ell_i = i -1$ for $i \leq r$ and $\ell_i = n+2r -i$ for $i > r$. 
For this set, we find that the term in~\eqref{eq:term_to_maximise} is equal to 
\begin{align*}
	2^{2r}\prod_{i=1}^{r}  \left[\cos \left(\frac{\pi(2\ell_i + 1)}{N+2r} \right)\right]^2
	= 2^{r}\prod_{j=0}^{r-1} \left[1 +\cos\left( \frac{\pi(2j+1)}{n + 2r} \right) \right].
\end{align*}
This does not prove that $\lim_{M \to \infty} Z(M)^{1/M}$ is equal to the above, since \eqref{eq:term_non_zero} may not be satisfied. 
It does, however, show the claimed inequality.
\end{proof}

\begin{corollary}\label{cor:a}
Fix $\De_0<-1$ and $r\ge0$. Then, for $N$ large enough, 
the Perron-Frobenius eigenvalue of $V_N^{[N/2-r]}$ for $\De_0$ is given by 
$\Lambda(\bp_{\De_0})$, where $(\bp_\De)_{\De\le\De_0}$ is the family given by Theorem~\ref{thm:1} applied to $\De_0$ and $r$. 
\end{corollary}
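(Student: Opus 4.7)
The plan is a continuity argument along the analytic family from Theorem~\ref{thm:1}. By Theorem~\ref{thm:BA}, to $\bp_\De$ corresponds an explicit vector $\psi_\De$ and number $\Lambda(\bp_\De)$, both analytic in $\De$; whenever $\psi_\De\neq 0$, $\Lambda(\bp_\De)$ is an eigenvalue of $V^{[n]}_\De$. On the other hand, for every $\De<-1$ the matrix $V^{[n]}_\De$ has positive diagonal entries (equal to $2$ by~\eqref{eq:V}) and is Perron--Frobenius, so its dominant eigenvalue $\lambda_{\rm PF}(\De)$ is simple, strictly positive, strictly separated from the rest of the spectrum, and analytic in $\De$. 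I will identify $\Lambda(\bp_\De)$ with $\lambda_{\rm PF}(\De)$ at $\De=-\infty$ and propagate the equality throughout $(-\infty,\De_0]$.

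First I would show that $\psi_\De\neq 0$, so that $\Lambda(\bp_\De)$ is a genuine eigenvalue. At $\De=-\infty$ one has $\Theta(x,y)=y-x$, so the coefficients $A_\sigma$ (after dividing by $(-2\De)^{n(n-1)/2}$) reduce to $\eps(\sigma)$ times a product of exponentials, and $\psi(\vec x)$ collapses to a Vandermonde-type alternating sum in the distinct phases $e^{\icomp p_j}$ with $p_j=2\pi I_j/(N-n)$; this is visibly non-zero. Since $\De\mapsto\psi_\De$ is analytic, it then vanishes only on a discrete set, and by continuity of the roots of the characteristic polynomial, $\Lambda(\bp_\De)$ remains an eigenvalue of $V^{[n]}_\De$ throughout $[-\infty,\De_0]$.

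Second, I would identify $\lim_{\De\to-\infty}\Lambda(\bp_\De)/(-2\De)^n$ with the dominant eigenvalue of $V^{[n]}_\infty$. Since $c^2/(-2\De)\to 1$, one has $L(z)/(-2\De)\to z/(1-z)$ and $M(z)/(-2\De)\to -1/(1-z)$; substituting $p_j=2\pi I_j/(N-n)$ into the formula of Theorem~\ref{thm:BA} (using the second branch when $n$ is odd, since then $p_{(n+1)/2}=0$) and exploiting $\sum_j p_j=0$ together with trigonometric identities on roots of unity of order $N-n=n+2r$, one should recover precisely $2^r\prod_{j=0}^{r-1}[1+\cos(\pi(2j+1)/(n+2r))]$. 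Combined with the remark after Lemma~\ref{lem:PFAtInfinity} that the bound~\eqref{eq:PFAtInfinity} is actually saturated, this places the limit of $\Lambda(\bp_\De)/(-2\De)^n$ on the dominant eigenvalue of $V^{[n]}_\infty$, which coincides with $\lim_{\De\to-\infty}\lambda_{\rm PF}(\De)/(-2\De)^n$.

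The conclusion then follows by a clopen argument on $(-\infty,\De_0]$. The set $\{\De:\Lambda(\bp_\De)=\lambda_{\rm PF}(\De)\}$ is closed by continuity, and open because wherever the continuous function $\Lambda(\bp_\De)$ meets the isolated, analytic, strictly dominant simple eigenvalue $\lambda_{\rm PF}(\De)$, the strict spectral gap forces them to coincide on a whole neighborhood. The second step shows the set is non-empty near $-\infty$, hence it equals the whole interval, and the claim follows. The main obstacle I anticipate is precisely this asymptotic computation: one must verify that the limit of $\Lambda(\bp_\De)/(-2\De)^n$ lands on the \emph{positive} dominant eigenvalue of $V^{[n]}_\infty$ (its negative counterpart exists because $V^{[n]}_\infty$ is irreducible of period $2$) and correctly handle the removable singularity at $p=0$ when $n$ is odd.
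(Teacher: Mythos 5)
Your proposal is correct and follows essentially the same strategy as the paper: show $\psi_\De \neq 0$ near $\De = -\infty$ via a Vandermonde determinant, compute $\lim_{\De\to-\infty}\Lambda(\bp_\De)/(-2\De)^n$ and match it against the dominant eigenvalue of $V^{[n]}_\infty$ from Lemma~\ref{lem:PFAtInfinity}, then propagate along $\De$. The only cosmetic difference is that you run a clopen/spectral-gap argument to carry the identification from $-\infty$ up to $\De_0$, whereas the paper notes that both $\De\mapsto\Lambda(\bp_\De)$ and $\De\mapsto\Lambda_r(\De)$ are analytic and invokes analytic continuation from agreement near $-\infty$; both routes are valid.
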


\begin{proof}
Fix $\De_0<-1$, $r\ge0$ and let $N$ be large enough for Theorem~\ref{thm:1} to apply. 
%(we will give explicit bounds during the proof); 
Write $n = N/2 - r$. Since $N$ is fixed, we drop it from the notation. 

The dependency of the Perron-Frobenius eigenvalue of $V_\Delta^{[n]}$ on $\De$ will be important, and we therefore denote it by $\Lambda_r(\De)$. 
Also, write $\psi(\bp_\De)$ for the vector given by Theorem~\ref{thm:BA} for the solution $\bp_\De$ to~\eqref{eq:BE}.
We wish to prove that $\Lambda_r(\De) = \Lambda(\bp_\De)$ for $\De = \De_0$. We will prove more generally that this is true for all $\De \leq \De_0$.

First, observe that the Perron-Frobenius eigenvalue of a family of irreducible symmetric matrices varying analytically in a parameter (here $\De$) varies analytically in this parameter as well (since it is a simple zero of the characteristic polynomial). Therefore, $\Lambda_r(\De)$ is an analytic function. 
Since $\De\mapsto\bp_\De$ is analytic, we deduce that $\De\mapsto \Lambda(\bp_\De)$ also is,  so that it is sufficient to show that $\Lambda(\bp_\De)=\Lambda_r(\De)$ for $\De$ small enough in order 
to conclude that the two are equal for all $\De \leq \De_0$. 
To do this, we shall prove two facts, namely that 
\begin{itemize}[noitemsep, nolistsep]
\item $\psi(\bp_\De)$ is non-zero for $\De$ small enough (which implies that $\Lambda(\bp_\De)$ is an eigenvalue of $V^{[n]}_\De$ for the corresponding values of $\De)$, 
\item $\lim_{\De \rightarrow -\infty}\frac1{(-2\Delta)^n} \Lambda(\bp_\De)$ 
is the largest eigenvalue of $V^{[n]}_\infty$ (defined in Lemma~\ref{lem:PFAtInfinity}).
\end{itemize}
These two facts indeed prove the result:
since the largest eigenvalue of $V^{[n]}_\infty$ is simple, 
by continuity of $\De \mapsto \Lambda(\bp_\De)$ and $\De \mapsto V^{[n]}_\De$, we deduce that $\Lambda(\bp_\De)$ is the largest eigenvalue of $V^{[n]}_\De$ for $\De$ small enough. However, for finite $\De$, $V^{[n]}_\De$ is a Perron Frobenius matrix, and $\Lambda(\bp_\De)$ is then its Perron Frobenius eigenvalue. The observation of the previous paragraph is then sufficient to conclude.

%A further simplification occurs by observing that, since $\De\mapsto \psi(\bp_\De)$ varies continuously,
%proving that it is non-zero at $\De=-\infty$ implies that  $\psi$ is non-zero for values of $\De$ close to $-\infty$, so that in such case $\Lambda(\bp_\De)$ is an eigenvalue of the transfer matrix for these values of $\De$.
%Since the Perron-Frobenius eigenvalue is isolated and varies continuously, $\Lambda(\bp_\De)=\Lambda_r(\De)$ for $\De=-\infty$ implies that the equality is true for values of $\De$ close to $-\infty$. The analyticity argument mentioned above then concludes equality for every $\De<-1$.

%Overall, it is therefore sufficient to prove that $\Lambda(\bp_\De)$ is the largest eigenvalue of $V^{[n]}_\infty$, and that the vector constructed by the Bethe Ansatz is non-zero at that point as well. 
The rest of the proof is dedicated to the two facts listed above. 
Recall that, at $\Delta = - \infty$, we have a simple formula for $\bp$, namely 
\[
p_j = \frac{2 \pi I_j}{N -n} \qquad \text{ for all $1\leq j \leq n$}.
\]
For the rest of the proof, write $\zeta = e^{2 \pi i/(N-n)}$. 

We start with the study of $\psi(\bp_\De)$.
Set $\psi_\infty:= \lim_{\De \to -\infty}(-2\De)^{- \frac{n(n-1)}2}\psi(\bp_\De)$. 
It suffices then to prove that $\psi_{\infty}$ has at least one non-zero coordinate, and we shall do so for the coordinate $\psi_\infty(2,4, \dots, 2n)$. First, we need to study the asymptotics of the coefficients $A_\si$ appearing in the definition of $\psi$. For $\si \in \mathfrak{S}_n$, as $\De \to -\infty$, 
$$ A_{\si} = \eps(\si) \prod_{1\le j<k\le n}\left[-2\De \zeta^{\si(j) - \frac{n+1}2}\right] + o(\De^{\frac{n(n-1)}2}).$$
By injecting this into the definition of $\psi$,  we find that, 
\begin{align*}
	\psi_{\infty}(2,\dots,2n) 
	&= \sum_{\sigma \in \mathfrak{S}_n} A_\sigma \prod_{k=1}^n \exp(i p_{\sigma(k)} \cdot 2k ) \, \\
	&= \sum_{\sigma \in \mathfrak{S}_n} \eps(\si)   \Big(\prod_{1 \leq j<k \leq n}\zeta^{\si(j) - \frac{n+1}2}\Big)\times\prod_{k=1}^n 
	 \zeta^{2(\si(k)-\frac{n+1}2)k}
	 \\
	 &= \zeta^{ - \frac{1}4 (n+1)^2n}
	 	\sum_{\si\in\mathfrak{S}_n} \eps(\si)\zeta^{\sum_{j=1}^n\si(j)j}.
\end{align*}
In the sum above, we recognise the determinant of the matrix $\big( \zeta^{j\cdot k}\big)_{1 \leq j,k\leq n}$. This is the Vandermonde matrix corresponding to the values $\zeta, \zeta^2, \dots, \zeta^n$, which are all distinct (since $2n \leq N$). Thus, 
$$ \psi_{\infty}(2,\dots,2n) = \lim_{\De \to -\infty}(-2\De)^{- \frac{n(n-1)}2} \psi_\De(2,\dots,2n)  \neq 0.$$

We now turn to the study of $\lim_{n\to \infty}(-2\Delta)^{-n} \Lambda(\bp_\De)$ (we show below that this limit exists). 
Before starting, we mention that, since $\psi_\infty \neq 0$, the above limit is an eigenvalue of $V^{[n]}_\infty$. 
With this and Lemma~\ref{lem:PFAtInfinity} in mind, it suffices to prove that it is equal to the RHS of~\eqref{eq:PFAtInfinity} to deduce that it is the largest eigenvalue of $V^{[n]}_\infty$. We do this below. 

The functions $L$ and $M$ defined in~\eqref{eq:LM} depend on $\De$ and degenerate when $\De \to -\infty$. 
However, we have 
\begin{align*}
		\frac{1}{-2\De}L(z)\xrightarrow[\De \to -\infty]{} \frac{z}{1-z} 
		\quad \text{ and } \quad 
		\frac{1}{-2\De}M(z)\xrightarrow[\De \to -\infty]{} \frac{-1}{1-z} 
		\qquad \forall z\in [-\pi,\pi]\setminus\{0\}.
\end{align*}
%Letting $\zeta = e^{2 \pi i/(N-n)}$, we find that
Therefore, we find that
\begin{equation}\label{eq:LimitEigenvalue}
\frac{\Lambda(\bp_{\Delta})}{(-2\Delta)^n} 
\xrightarrow[\De \to -\infty]{} \begin{cases} 
\frac{2}{\prod_{j=1}^n (1 - \zeta^{j - (n+1)/2})} & \text{if $n$ is even}, \\
(N - n) \times  \prod_{j =1, j \neq (n+1)/2}^n \left( \frac{1}{1 - \zeta^{j - (n+1)/2}} \right) & \text{if $n$ is odd}.
\end{cases}
\end{equation}
(Recall that $\Theta_{-\infty}(x,y)=y-x$, $c^2$ behaves like $-2\Delta$, and $\zeta^{N-n}=1$.) When $n$ is an even number, the decomposition of the polynomial $x^{N-n} -1$ reads
\[
x^{N-n} -1 = \prod_{j=1}^{N-n} \left(x - \zeta^{j - n/2} \right). 
\]
Thus, if we multiply the numerator and denominator in~\eqref{eq:LimitEigenvalue} by the terms corresponding to $j=n+1$ to $N-n$ and apply the above to $x = \zeta^{1/2}$, we find that 
\begin{align*}
\frac{2}{\prod_{j=1}^n (1 - \zeta^{j - (n+1)/2})} 
& = \frac{2 \times \prod_{j=n+1}^{N-n} \left(1 - \zeta^{j - (n+1)/2}\right)}{\zeta^{-(N-n)/2} \left( \zeta^{(N-n)/2} - 1\right)} \\
& = \prod_{j=n+1}^{N-n} \left(1 - \zeta^{j - (n+1)/2}\right) \\ 
& = \prod_{j=0}^{2r-1} \left(1 - \zeta^{j+ (n+1)/2}\right) \\ 
& = \prod_{j=0}^{r-1} \left[2 - 2\cos\left(\frac{\pi(2j+n+1)}{N-n} \right) \right] \\ 
& = 2^r\prod_{j=0}^{r-1} \left[1 +\cos\left( \frac{\pi(2j+1)}{N-n} \right) \right],
\end{align*}
where in the second equality we used that $\zeta^{(N-n)/2}=-1$, in the third we changed $j$ to $N-n-j$ and used that $N-n = n+2r$, in the fourth we grouped the $j=k$ and $j = 2r -1 - k$ terms together. The last equality follows again from the fact that $N-n=n+2r$ and changing  $j$ to $r-1-j$.
This matches the expression in~\eqref{eq:PFAtInfinity}, as required. 

We use a similar strategy when $n$ is odd. Noting that
\[
\prod_{\substack{1 \leq j \leq N-n \\ j \neq (n+1)/2}} \left({1 - \zeta^{j - (n+1)/2}} \right)
= \lim_{x \to 1}\frac{x^{N-n}-1}{x-1} = N-n,
\]
we may perform a similar computation to obtain again
$$
(N - n) \times  \prod_{j =1, j \neq (n+1)/2}^n \left( \frac{1}{1 - \zeta^{j - (n+1)/2}} \right) 
= 2^r\prod_{j=0}^{r-1} \left[1 +\cos\left( \frac{\pi(2j+1)}{N-n} \right) \right].
$$
\end{proof}

\begin{remark}\label{rmk:gold}
The analyticity of $\De \mapsto \bp_\De$ allows us to avoid using a highly non-trivial fact (which would be necessary would we have continuity only), namely that for each $\De$, $N$ and $n$, the vector obtained by the Bethe Ansatz from the solution $\bp_\De$ to~\eqref{eq:BE} is non-zero. 
This is necessary to deduce that the associated value $\Lambda(\bp_\De)$ is indeed an eigenvalue of the transfer-matrix.
Let us mention that Goldbaum proves that the vector obtained by the Bethe Ansatz for the 1D Hubbard model is indeed non-zero for every $\Delta$. The proof relies on a symmetry of the model which is not satisfied by the six-vertex model. Kozlowski claims a similar result for the XXZ chain in \cite{Koz15}.  \end{remark}

\subsection{From the Bethe Equation to the six-vertex model: proof of Theorem~\ref{thm:6V}}\label{sec:6V}

The goal of this section is the proofs of Theorem~\ref{thm:6V} and Corollary~\ref{cor:6V}. 

\begin{proof}[Theorem~\ref{thm:6V}] 
We divide the proof in three steps. We first treat relation \eqref{eq:aggf}. 
We then focus on \eqref{eq:aggg} with $r>0$, even, and finally treat the case of \eqref{eq:aggg} with $r>0$, odd. Note that \eqref{eq:aggg} with $r<0$ follows directly from $r>0$ since the transfer matrix $V$ is invariant under global arrow flip, and therefore, the spectrums of $V$ on $\Omega_n$ and $\Omega_{N - n}$ are identical. 

Fix $c > 2$ and recall that $\De = \frac{2-c^2}2 < -1$. 
Generically, in this proof $\bp=\bp_\De(N)$ and $\tilde\bp=\tilde\bp_\De(N)$ are given by Theorem~\ref{thm:1} applied to $\De_0=\De$ and $n=N/2$ and $N/2 - r$ respectively. We will always assume $N$ to be a multiple of $4$ (in particular $N/2$ is even). 
For clarity, we will drop $N$ and $\De$ from the notation and write $n = N/2 - r$. 

\paragraph{Proof of \eqref{eq:aggf}.}
The Bethe Ansatz and Corollary~\ref{cor:a} imply that 
$$\Lambda_0(N):=2 \prod_{j=1}^n |M(e^{ip_j})|,$$
where we used above that $\bp$ is symmetric with respect to the origin and that $L(z)=M(\overline z)$ for $|z|=1$
to deduce both products in the expression in Theorem~\ref{thm:BA} are equal to the product of the $|M|$.
By Theorem~\ref{thm:2}, we deduce\footnote{One should be wary of the log singularity at 0 of $\log |M|$. However, since $\log |M|$ is in $L^1[(-\pi,\pi)]$, standard truncation techniques are sufficient to show the convergence of the sum to the integral above. In particular one uses that the $p_j$'s are well-separated -- that is that $p_{j+1} - p_j \geq {\pi}/N$ for all sufficiently large values of $N$, which follows from \eqref{eq:BE} and the monotonicity of $\Theta$ -- to ensure that there are not too many $p_j$'s near the origin.} that
\begin{equation}\label{eq:ag}
\lim_{N\rightarrow\infty}\frac1N\log \Lambda_0(N)= \int_{-\pi}^\pi \log \left|M\left(e^{ix}\right)\right| \rho(x) dx.
\end{equation}
The explicit form of $\rho$ enables us to compute this integral explicitly via Fourier analysis (see Section~\ref{sec:Fourier} for details) to obtain the result.

\paragraph{Proof of \eqref{eq:aggg}, case $r>0$ even.}

In this case, both $N/2$ and $n$ are even, so that the Bethe Ansatz together with Corollary~\ref{cor:a} imply that
\begin{equation}\label{eq:EigenvalueRatio}
\frac{\Lambda_r(N)}{\Lambda_0(N)}  = \underbrace{\prod_{j=1}^{n} \frac{|M\left(e^{\icomp \tilde{p}_{j}}\right)|}{|M\left(e^{\icomp p_{j+r/2}}\right)|}}_{(1)} \cdot \underbrace{\Big(\prod_{j=1}^{r/2}|M\left(e^{\icomp p_j}\right)|\Big)^{-2}}_{(2)},
\end{equation} 
where again we used that $\bp$ is symmetric with respect to the origin to group the two products into a single one. 

We study the two terms separately.
The term (2) converges to $|\Delta|^{-r}$, since $M$ is continuous, $M(-1)=\Delta$ and the first $r/2$ coordinates of $\bp$ converge to $-\pi$ as $N \to \infty$. 
As for the first term, by taking the logarithm and using that $\mu_N$ converges weakly (by Theorem~\ref{thm:2}) and $f_{\bp(N),\tilde\bp(N)}$ converges uniformly (by Theorem~\ref{thm:3}), we deduce that (1) converges to 
\begin{equation}\label{eq:agg}
	\exp\Big(r\int_{-\pi}^{\pi}\ell'(x)\tau(x) dx\Big),
\end{equation}
where $\ell(x):=\log |M(e^{\icomp x})|$. Note that $\ell'(x)$ behaves like $1/|x|$ near the origin. Nonetheless, this does not raise any issue here since by Theorem~\ref{thm:3}, $f_{\bp(N),\tilde\bp(N)}(x)\le C|x|$ uniformly in $N$; thus, $\ell'(x) \tau(x)$ is uniformly bounded, and the weak convergence applies. 

The explicit forms of $\tau$ and $\ell$ lead to the expression in the statement of Theorem~\ref{thm:6V}, thus concluding the proof. The relevant computation is based on Fourier analysis and is deferred to Section~\ref{sec:Fourier}.

\paragraph{Proof of \eqref{eq:aggg}, case $r>0$ odd.}

In this case $N/2$ is even and $n$ is odd. The Bethe Ansatz and Corollary~\ref{cor:a} imply that
\begin{align*}
\frac{\Lambda_r(N)}{\Lambda_0(N)}  &=\underbrace{\frac{\displaystyle 2+ c^2 (N-1) + c^2 \sum_{j\ne (n+1)/2} \partial_1 \Theta (0,\tilde p_{j})}{2}}_{(A)}\underbrace{\frac{\displaystyle\prod_{\substack{j=1:j\ne (n+1)/2}}^n|M\left(e^{\icomp \tilde{p}_{j}}\right)|}{\displaystyle\prod_{j=1}^{N/2}|M\left(e^{\icomp p_j}\right)|}}_{(B)}.
\end{align*}
(The 2 in the denominator of the first fraction comes from the fact that $\Lambda_0(N)$ involves two products, whereas $\La_r(N)$ only contains one.) First, observe that the weak convergence of $\tilde p_j$ and~\eqref{eq:RhoDef} imply that 
\begin{equation*}
(A)= \frac{Nc^2}2\Big(1 + \int_{-\pi}^\pi\pd_1\Theta(0,x)\rho(x)dx + o(1)\Big) =
\frac{c^2}22\pi\rho(0) N + o(N).
\end{equation*} 
We now focus on (B) and divide it into four terms
\begin{align*}
(B)&=\underbrace{\prod_{\substack{j=1\\ j\ne (n+1)/2}}^{n}\frac{|M\left(e^{\icomp \tilde{p}_{j}}\right)|}{|M\left(e^{\icomp \hat p_{j}}\right)|}}_{(1)}\cdot  \underbrace{\Big(|M(e^{\icomp p_{(r+1)/2}})| \prod_{j=1}^{(r-1)/2}|M\left(e^{\icomp p_j}\right)|^2\Big)^{-1}}_{(2)}\cdot\underbrace{|M(e^{\icomp p_{N/2}})|^{-1}}_{(3)}\\
&
\qquad\cdot \underbrace{\prod_{j=1}^n\frac{|M\left(e^{\icomp \hat p_{j}}\right)|}{\sqrt{|M\left(e^{\icomp p_{j+(r-1)/2}}\right)||M\left(e^{\icomp p_{j+(r+1)/2}}\right)|}}}_{(4)}
\end{align*} 
where $\hat p_j=\tfrac12(p_{j+(r-1)/2}+p_{j+(r+1)/2})$.
To obtain the terms (2) and (3), we have used that $p_{N/2+1-j}=-p_j$. 
The same arguments as in the previous case imply that (1) converges to 
$\exp(r\int_{-\pi}^{\pi}\ell'(x)\tau(x)dx)$
and (2) to $|\Delta|^{-r}$. 
Furthermore, symmetry and~\eqref{eq:regularity} imply\footnote{We used that $\displaystyle p_{N/2+1}=\tfrac12(p_{N/2+1}-p_{N/2})= \frac{1}{2\rho(0)N}+O\Big(\frac1{N^2}\Big)$ and $\displaystyle p_{N/2+{k+1}}-p_{N/2+k}= \frac{1}{\rho(0)N}+O\Big(\frac{k}{N^2}\Big)$.} that for each fixed~$k$,
$$p_{N/2+k}=\frac{k-1/2}{\rho(0)N}+O\Big(\frac{k^2}{N^2}\Big),$$
where $O(\cdot)$ is uniform in $k$ and $N$.
Since $|M(e^{ip})|= c^2/|p| +o(1)$ for $p$ close to 0, we deduce that  
\begin{align*}
(3)& = \frac{1}{2\rho(0)c^2 N} + o\Big(\tfrac1{N}\Big),\\
(4)& = \prod_{k=N/2+1}^\infty \frac{4p_{k}p_{k+1}}{(p_{k}+p_{k+1})^2} +o(1) = \prod_{k=1}^\infty\Big(1-\frac{1}{4k^2}\Big) + o(1)=\frac2{\pi} + o(1).
\end{align*}
(In approximating (4), we used~\eqref{eq:regularity} to control $p_{N/2+k+1}-p_{N/2+k}$ for $k\ge N^{1/2}$.) 
Combining the estimates above and appealing to the computation of $\int_{-\pi}^{\pi}\ell'(x)\tau(x)dx$ in Section~\ref{sec:Fourier}, we obtain the expected result.
\end{proof}

We now prove Corollary~\ref{cor:6V}. The proof consists in two steps. We first prove that the free energy exists and that it is related to the sum of weighted configurations that are ``balanced''. We then relate the latter to the rate of growth of the Perron-Frobenius eigenvalue of $V_N^{[N/2]}$. 

The proof of the existence of the free energy is slightly tedious due to the fact that the six-vertex model does not enjoy the finite-energy property. Nevertheless, it is close in spirit to corresponding proofs for other models.   
The next section enables us to deduce the existence of the limit along $N$ and $M$ even using the connection to the random-cluster model.
Nonetheless, we believe that a direct proof is of value.

The proof below is not connected to other arguments in this paper except through its result. We encourage the reader mostly interested in Theorems~\ref{thm:Potts} and~\ref{thm:RCM} to skip this proof.

\begin{proof}[Corollary~\ref{cor:6V}]
{\bf Step 1: Existence of free energy.}
For $N,M \in \bbN$, let $\bfR_{N,M}$ be the subgraph of $\bbZ^2$ with vertex set $V(\bfR_{N,M}) =\{1,\dots,N\} \times \{1,\dots, M\}$ and edge-set $E(\bfR_{N,M})$ formed of all edges of $\bbZ^2$ with both endpoints in  $V(\bfR_{N,M})$. 
Define the {\em edge-boundary} of $\bfR_{N,M}$ as the set $\pd_e \bfR_{N,M}$ of edges of $\bbZ^2$ with exactly one endpoint in $V(\bfR_{N,M})$.

A six-vertex configuration on $\bfR_{N,M}$ is an assignment of directions to each edge of $E(\bfR_{N,M}) \cup \partial_e \bfR_{N,M}$. For such a configuration $\vec{\om}$, the weight is computed as on the torus:
$$
	w(\vec{\om}) = a^{n_1 + n_2} \cdot b^{n_3 + n_4} \cdot c^{n_5 + n_6}, 
$$
where $n_1, \dots, n_6$ are the numbers of vertices of $\bfR_{N,M}$ of types $1, \dots, 6$ respectively
(as on the torus, we implicitly assign weight $0$ to configurations not obeying the ice rule).
As in the rest of the paper, we fix $a=b=1$ and $c > 0$. % actually the proof only works for $a= b$. 

A {\em boundary condition} $\xi$ for $\bfR_{N,M}$ is an assignment of directions to each edge of $\partial_eR_{N,M}$.
Let 
$$ Z_{N,M}^{\xi} = \sum_{\vec{\om}} w(\vec{\om})\ind_{\{\vec{\om}(e) = \xi(e)\, \forall e \in \pd_e \bfR_{N,M}\}}. $$
Here, we are effectively summing only over configurations which agree with $\xi$ on the edge-boundary. 
Observe that configurations obeying the ice-rule and consistent with $\xi$ 
exist only when $\bfR_{N,M}$ has as many incoming as outgoing edges in $\xi$. 

Some boundary conditions $\xi$ is called {\em toroidal} if $\xi(e) = \xi(f)$ for any boundary edges $e$ and $f$ of $\bfR_{N,M}$
such that $f$ is a translate of $e$ by $(0,M)$ or $(N,0)$. 
When $N$ is even, some toroidal boundary conditions $\xi$ is called {\em balanced} if it
contains exactly $N/2$ up arrows on the lower row of $\pd_e \bfR_{N,M}$.
Using this notation, the partition function of the six-vertex model on $\bbT_{N,M}$ may be expressed as 
\begin{align*}
	Z_{N,M} = \sum_{\vec{\om} \in \bbT_{N,M}} w(\vec{\om}) = \sum_{\xi:\, \text{toroidal}} Z^{\xi}_{N,M},
\end{align*}
where the second sum is over all toroidal boundary conditions $\xi$ on $\bfR_{N,M}$.
Moreover, set 
\begin{align*}
	Z_{N,M}^{\rm (bal)} = \sum_{\xi:\, \text{balanced}} Z^{\xi}_{N,M},
\end{align*}
the sum now being only over balanced toroidal boundary conditions. 
%The existence of $f(1,1,c)$ may then be stated as follows. 
%
%\begin{claim}\label{prop:existence_F}
%	The following limits exist and are equal, however $M$ and $N$ are taken to tend to infinity (with $N$ even in the second)\footnote{As it will be apparent from the proof, this also shows that the six-vertex modes with free boundary conditions (whose partition function is $\sum_{\xi}Z^{\xi}_{N,M}$ with the sum being over all boundary conditions $\xi$) has the same free energy as the toroidal model. }:
Our goal is to prove that the following limits exist:
	\begin{align}\label{eq:agr}
		\lim_{N,M\rightarrow\infty} \frac1{MN} \log Z_{N,M} 
		% = \lim_{M,N \to \infty} \frac1{MN} \log Z_{N,M}^{\rm (free)}
		=	\lim_{\substack{N,M\rightarrow\infty\\ N\text{ even}}} \frac1{MN} \log Z_{N,M}^{\rm (bal)}.
	\end{align}
Above, the limits can be taken in whichever order we desire. We leave it as a simple exercise to the reader to check that~\eqref{eq:agr} can be easily deduced from the following lemma.

\begin{lemma}\label{lem:bc}
	(i)
	The following inequality holds: $$Z_{2N,2M}\ge   Z_{2N,2M}^{\rm (bal)} \geq (\tfrac1{16})^{M+N}\big(Z_{N,M}\big)^4 .$$
	(ii) %Let $\xi$ be a toroidal balanced boundary condition on some rectangle $\bfR_{N,M}$ (with $N$ even). 
	There exists $C>0$ such that for all integers $n>N$ and $m>M$ with $n$ and $N$ even,  
	$$ 
		\frac1{nm}\log Z^{\rm (bal)}_{n,m} 
		\geq \frac1{MN} \log Z^{\rm (bal)}_{N,M} 
		-C\Big(\frac Nn+\frac Mm\Big).
	$$  
\end{lemma}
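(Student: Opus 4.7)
The first inequality in (i), $Z_{2N,2M}\geq Z_{2N,2M}^{(\mathrm{bal})}$, is immediate since balanced toroidal BCs are a subset of all toroidal BCs on $\bfR_{2N,2M}$. The two remaining bounds are proved by tiling and gluing constructions; (ii) is more standard and I will treat it first, then turn to the combinatorially more delicate second half of (i).

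For (ii), write $n = kN + r_1$ and $m = \ell M + r_2$ with $k = \lfloor n/N\rfloor$ and $\ell = \lfloor m/M\rfloor$. I will tile a $kN\times \ell M$ sub-rectangle of $\bfR_{n,m}$ with $k\ell$ copies of $\bfR_{N,M}$, all carrying the same balanced toroidal boundary condition $\xi$. Since $\xi$ is toroidal, its arrows match automatically at each interior interface, preserving the ice rule. The residual L-shaped strip, of area at most $Nm + Mn$, can be filled with a fixed ice-rule-respecting configuration using only type-$1$ through type-$4$ vertices, so its contribution to the weight is at least $1$. Choosing $\xi$ judiciously among balanced boundary conditions, and using the elementary inequality $k\ell\geq nm/(NM) - n/N - m/M$ together with the crude bound $\log Z^{(\mathrm{bal})}_{N,M}\leq C_0 NM$, one obtains the claimed error of order $N/n + M/m$ after dividing by $nm$.

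For the second inequality in (i), I will construct a balanced configuration on $\bfR_{2N,2M}$ from four configurations $\vec{\omega}_1,\ldots,\vec{\omega}_4$ on $\bbT_{N,M}$ placed in the four quadrants, with their toroidal arrows used as the arrows on the interfaces and on the outer boundary. Two constraints appear: the ice rule at each interface vertex forces the horizontal (resp.\ vertical) wraparound arrows of horizontally (resp.\ vertically) adjacent tiles to coincide; and the balance condition requires the up-arrow count per row of $\bfR_{2N,2M}$ to equal $N$. I will handle the balance condition by pairing 4-tuples with arrow-flipped variants (converting a configuration with $k$ up-arrows per row into one with $N-k$), and the wraparound matching by decomposing $Z_{N,M} = \sum_\xi Z_{N,M}^\xi$ over the $2^{N+M}$ wraparound patterns $\xi$ and applying a Cauchy--Schwarz-type inequality. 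The compounded loss along the $N+M$ independent wraparound edges produces the factor $(1/16)^{N+M}$.

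The hardest step is the construction in (i): keeping track of the ice rule at interface vertices, ensuring a balanced outer boundary on $\bfR_{2N,2M}$, and extracting the precise constant $1/16$ all simultaneously requires careful bookkeeping, because naive gluings (e.g.\ four copies of a single configuration) produce balanced outputs only when the input is already balanced, and the arrow-flip pairing must be coupled consistently with the wraparound matching. The argument for (ii) is substantially more routine once the filler is chosen, the main subtlety being to verify that the filler and tile boundary conditions together produce a balanced outer boundary on $\bfR_{n,m}$.
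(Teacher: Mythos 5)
For part (ii) your plan — tile an $aN\times bM$ sub‑rectangle with copies of a balanced boundary condition $\xi$ that nearly maximises $Z^\xi_{N,M}$, fill the residual L‑shaped strip with an explicit low‑cost ice configuration, and divide by $nm$ — is essentially the paper's argument, and the subtleties you flag (choosing the filler so that the resulting boundary condition on $\bfR_{n,m}$ is balanced) are real but handleable in the way the paper does it; this part of the proposal is fine in outline.

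Part (i) is where there is a genuine gap. The difficulty you correctly identify — that gluing four copies of the same $\xi$ forces the ratio of up‑arrows per row of $\bfR_{2N,2M}$ to be $2k/2N$ rather than $1/2$, i.e.\ the output is balanced only if $\xi$ already is — is not resolved by the two ingredients you propose. Flipping all arrows of a block to convert $k$ up‑arrows into $N-k$ destroys the interface compatibility: if the left quadrant has wraparound pattern $h$, its arrow‑reversed neighbour has wraparound $\bar h$, and $h=\bar h$ has no solutions, so the two cannot be glued. And the ``Cauchy–Schwarz over wraparound patterns'' step lower‑bounds $\sum_\xi (Z^\xi)^4$ by $(Z_{N,M})^4$ only after you have shown that each $(Z^\xi)^4$ is dominated by the partition function with a \emph{balanced} toroidal boundary condition; that containment is precisely the missing construction. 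The key idea the paper uses, and which your proposal does not contain, is that the six‑vertex weight is invariant under the compositions of a reflection (horizontal or vertical) or a $\pi$‑rotation with global arrow reversal. Placing $\xi$, its horizontal‑reflection‑plus‑reversal $\xi_1$, its vertical‑reflection‑plus‑reversal $\xi_2$, and its $\pi$‑rotation $\xi_3$ in the four quadrants makes all internal interfaces match automatically (reflection flips the edge once and reversal flips it back), makes the outer boundary balanced (the rotated quadrant contributes $N-k$ up‑arrows against $k$ from its partner, giving $N$ per row), and — crucially — the symmetry gives $Z^{\xi_i}_{N,M}=Z^{\xi}_{N,M}$, so that $Z^{\zeta}_{2N,2M}\geq(Z^{\xi}_{N,M})^4$ for the \emph{same} $\xi$ on all four factors. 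Without this last identity you would only obtain a product over four distinct, uncontrolled $Z^{\xi_i}$'s. Taking then the $\xi$ maximising $Z^\xi_{N,M}$ and using $\max_\xi Z^\xi_{N,M}\geq 2^{-(N+M)}Z_{N,M}$ produces the factor $(1/16)^{N+M}$; your Cauchy–Schwarz route would not by itself produce the needed balanced configuration, and even if it did, it would not give an account of where this specific constant comes from.
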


\begin{remark}
This lemma may also be used to show that the free energy of the six-vertex model with ``free boundary conditions'' (i.e.~with partition function $\sum_{\xi} Z^{\xi}_{N,M}$ with sum over all boundary conditions) is equal to $f(1,1,c)$. 
\end{remark}

\begin{proof}[Lemma~\ref{lem:bc}]
	\noindent (i)
	Before proceeding to the proof, observe that the weight of a configuration is invariant under 
	horizontal and vertical reflections and rotations by $\pi$ of the configuration, as well as under the inversion of all arrows. 
	It follows that if $\xi$ is some boundary conditions on some rectangle $\bfR_{N,M}$
	and $\xi'$ is the boundary conditions obtained from $\xi$ via one of the operations mentioned above, 
	then 
	$$ Z^{\xi'}_{N,M} = Z^{\xi}_{N,M}.$$
	
	Let $N,M$ be integers and $\xi$ be boundary conditions on $\bfR_{N,M}$.
	The construction below is described in Fig.~\ref{fig:bcT}.
	\begin{figure}
		\begin{center}
		\includegraphics[width=0.7\textwidth]{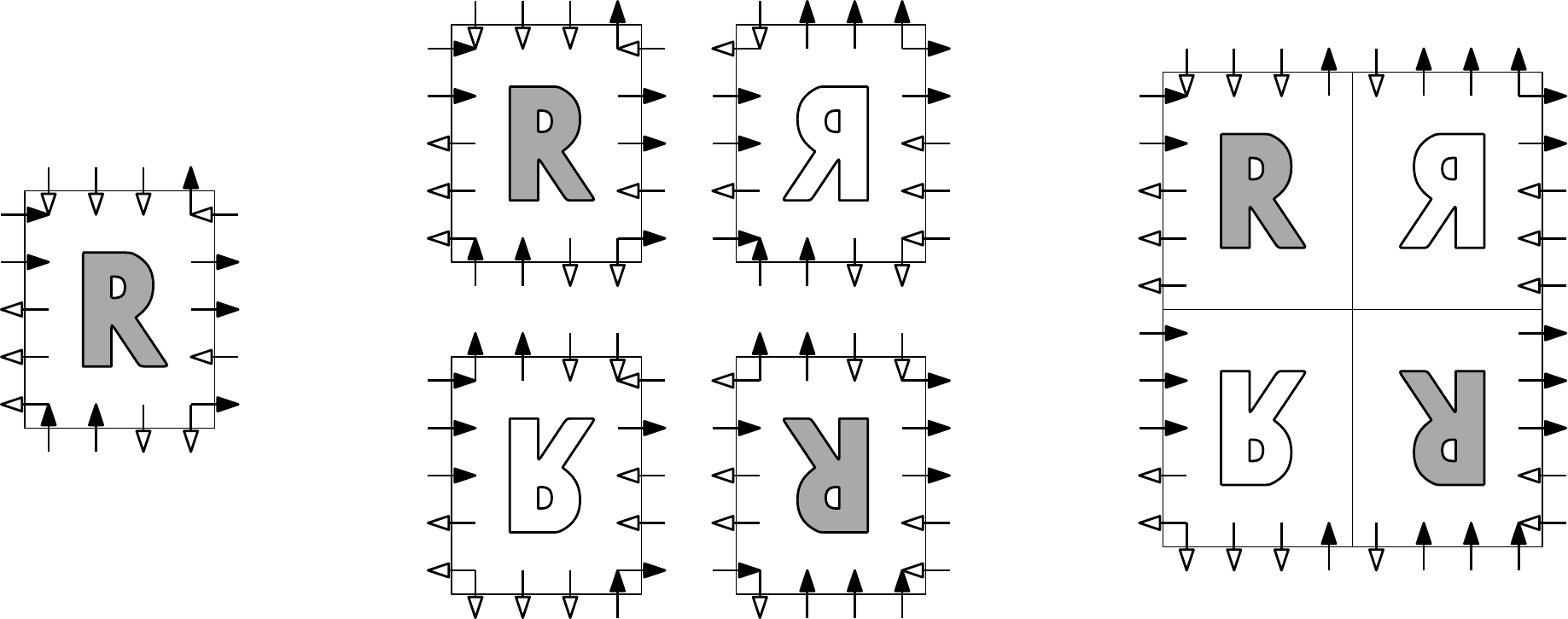}
		\caption{The passage from boundary conditions $\xi$ to balanced toroidal boundary conditions $\zeta$. 
		The letter ${R}$ inside the rectangles is only to indicate the performed transformations 
		(rotations, reflections and reversal of all arrows). }
		\label{fig:bcT}
		\end{center}
	\end{figure}
	Let $\xi_1$ be the boundary conditions on $\bfR_{N,M}$ 
	obtained from $\xi$ by horizontal reflection and arrow reversal, 
	$\xi_2$ be obtained from $\xi$ by vertical reflection and arrow reversal
	and $\xi_3$  be obtained from $\xi$ by rotation by $\pi$. 
	Let $\zeta$ be the toroidal boundary conditions on $\bfR_{2N,2M}$ composed as follows: 
	\begin{itemize*}
	\item the top half of the left side agrees with $\xi$,
	\item the bottom half of the left side agrees with $\xi_2$,
	\item the left half of the top side agrees with $\xi$ and
	\item the right half of the top side agrees with $\xi_1$.
	\end{itemize*}
	Note that $\zeta$ is balanced so that $$Z_{2N,2M}\ge   Z_{2N,2M}^{\rm (bal)} \ge Z^{\zeta}_{2N,2M}.$$
	%Indeed, by construction, the number of up arrows on the left side of the top of $\xi$ 
	%is equal to the number of down arrows on the right side of the top of $\xi$.

	Upon inspection of Fig.~\ref{fig:bcT}, one may easily deduce that for any boundary conditions $\xi$ on $\bfR_{N,M}$,
	$$ 
		 Z_{2N,2M}^{\rm (bal)}\ge Z^{\zeta}_{2N,2M} 
		\geq Z^{\xi}_{N,M} Z^{\xi_1}_{N,M}Z^{\xi_2}_{N,M}Z^{\xi_3}_{N,M}
		= \big(Z^{\xi}_{N,M}\big)^4.
	$$
	By summing over the $2^{N+M}$ toroidal boundary conditions $\xi$, the result follows.
	\smallskip

	\noindent (ii) Write $n=aN+r$ and $m=bM+q$ with $0\le r<N$ and $0\le q<M$. Fix a balanced toroidal boundary conditions $\xi$ on $\bfR_{N,M}$. 
	The construction that follows is described in Fig.~\ref{fig:subadd}. 
	
	Let $\xi_1$ be the toroidal boundary conditions on $\bfR_{aN, bM}$ obtained
	by repeating $a$ times each horizontal side
	and $b$ times each vertical side of $\xi$ on the corresponding sides of $\xi_1$. 

	Let $\xi_2$ be the toroidal boundary conditions on $\bfR_{r, bM}$, equal to $\xi_1$ on the vertical sides, 
	with $r/2$ down arrows amassed to the left of the bottom (and top) side,
	completed by $r/2$ up arrows at the right of the bottom (and top) side. 
	
	Finally, define $\xi_3$ to be the toroidal boundary conditions on $\bfR_{n, q}$
	with only left-pointing arrows on the vertical sides, 
	equal to the top of $\xi_1$ for the left-most $aN$ arrows of both the top and bottom sides
	and equal to top of $\xi_2$ for the remaining $r$ right-most arrows of the top and bottom sides.
	
	Set $\zeta$ to be the boundary conditions obtained from the gluing of $\xi_1,\xi_2$ and $\xi_3$, 
	that is: 
	\begin{itemize*}
	\item the top and bottom sides of $\zeta$ are equal to those of $\xi_3$,
	\item the bottom $bM$ arrows of the left and right sides of $\zeta$ are equal to those of $\xi_1$,
	\item the top $q$ arrows of the left and right sides of $\zeta$ are pointing leftwards.
	\end{itemize*} 
We thus easily deduce that
	$$ 
		Z^\zeta_{n,m} 
		\geq Z^{\xi_1}_{aN, bM}  Z^{\xi_2}_{r, bM}  Z^{\xi_3}_{n, q}
		\geq \big(Z^{\xi}_{N,M}\big)^{ab}  Z^{\xi_2}_{r, bM}  Z^{\xi_3}_{n, q}.
	$$
	It remains to prove a lower bound on the last two terms. 
	Observe that there exists at least one configuration $\vec{\om}_3$ on $\bfR_{aN + r, q}$, agreeing with the boundary conditions $\xi_3$ 
	and having non-zero weight. 
	It is obtained by setting all horizontal edges pointing left and all rows of vertical edges being identical to the top of $\xi_3$. 
	This proves that 
	$$ Z^{\xi_3}_{aN + r, q} \geq w(\vec{\om}_3) \geq \min\{1,c\}^{(aN+r)q}.$$
	A slightly more involved construction is necessary to exhibit a configuration $\vec{\om}_2$ on $\bfR_{r, bM}$, consistent with $\xi_2$ 
	and with non-zero weight.
	We represent it in Fig.~\ref{fig:subadd} and leave it to the meticulous reader to check the details of its construction. 
	It follows that 
	$$ Z^{\xi_2}_{r, bM} \geq w(\vec{\om}_2) \geq \min\{1,c\}^{bMr}.$$
	We conclude that 
	$$ 
		Z^{\rm (bal)}_{n,m} \ge Z^\zeta_{n,m} 
		\geq \big(Z^{\xi}_{N,M}\big)^{ab} \min\{1,c\}^{bMr + aNq + rq}.
	$$
	By choosing $\xi$ maximizing $Z^\xi_{N,M}$, we deduce that 
	$$Z^{\rm (bal)}_{n,m}\ge  \big(Z^{\rm (bal)}_{N,M}\big)^{ab} \min\{1,c\}^{bMr + aNq + rq}\big(\tfrac12\big)^{ab(M+N)}.$$
	The result follows by taking the logarithm.
	\begin{figure}
		\begin{center}
		\includegraphics[width=0.6\textwidth]{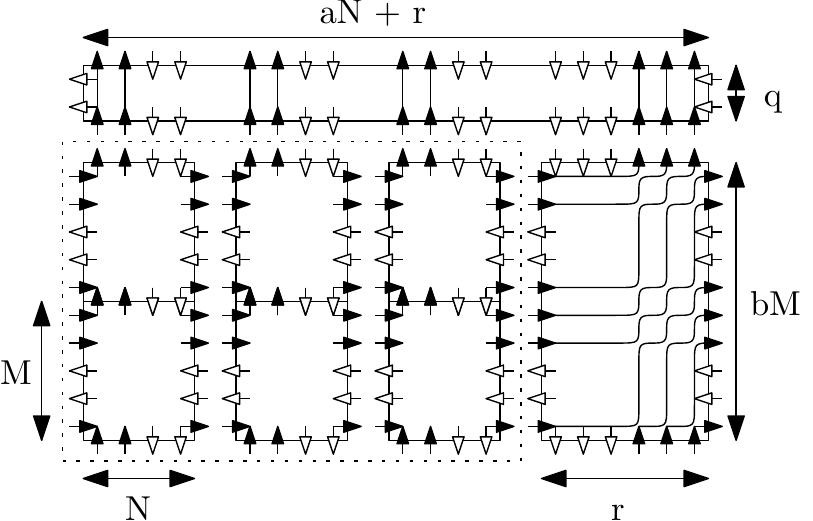}
		\caption{The block of size $\bfR_{aN, bM}$ with balanced toroidal boundary conditions $\xi_1$ is encircled; 
		on its right is the block $\bfR_{r, bM}$ with boundary conditions $\xi_2$
		and above is the block  $\bfR_{aN + r, q}$ with boundary conditions $\xi_3$.
		In the two latter blocks, examples of configurations with positive weight are given
		(only the up and right-pointing edges are drawn in the interior of the blocks).}
		\end{center}
		\label{fig:subadd}
		% I know that I have not mentioned the representation by non-intersecting paths; but I still think the figure is self-explanatory. I can change it if you think more details are needed.
	\end{figure}
\end{proof}
\medskip

\noindent{\bf Calculation of the free energy.}
Recall Proposition 2.1 of \cite{BetheAnsatz1} and more specifically equation (3.1), that expresses $Z_{N,M}$ as the trace of $V^M$. A straightforward adaptation shows that, for all $N$ multiple of $4$ and even, 
\begin{align*}
	Z_{N,M}^{\rm (bal)} = \Tr\Big[\big(V^{[N/2]}\big)^M\Big]=\la_0^M+\la_1^M+\dots,\end{align*}
where $\la_0,\la_1,\dots$ are the ${\binom{N}{N/2}}$ eigenvalues of the diagonalizable matrix $V^{[N/2]}$, listed  with multiplicity and indexed such that $|\la_0| \geq |\la_1|\geq \dots$. 
Since $V^{[N/2]}$ is a Perron-Frobenius matrix, $|\la_0| > |\la_1|$ and $\la_0 = \La_0(N)$ (the eigenvalue computed in Theorem~\ref{thm:6V}), so that %Thus
%\begin{align*}
%	\lim_{N\rightarrow\infty}\frac{\Tr\ \big[\big(V^{[N/2]}\big)^M\big]}{\La_0(N)^M} =1.
%	%= \sum_{0 \leq j < {\binom{N}{N/2}}} \frac{\la_j^M}{\La_0(N)^M} 
%	%= 1 + \sum_{0 < j < {\binom{N}{N/2}}} \frac{\la_j^M}{\La_0(N)^M} \xrightarrow[M \to \infty]{}1.
%\end{align*}
\begin{align*}
	\lim_{M \to \infty} \frac{1}{M} \log \Tr\ \big(V^{[N/2]}\big)^M = \log \La_0(N).
\end{align*}
In light of~\eqref{eq:agr}, the limit defining $f(1,1,c)$ may be taken with $M \to \infty$ first, 
then $N\to \infty$ along multiples of $4$. Thus we find, 
\begin{align*}
	f(1,1,c) = \lim_{\substack{N \to \infty\\ N \in 4\bbN}} \frac{1}{N} \log \La_0(N) \stackrel{\eqref{eq:aggf}}=\frac{\lambda}{2} + \sum_{m=1}^\infty \frac{ e^{- m \lambda} \tanh(m\lambda)}{m} .
\end{align*}
\end{proof}

\begin{remark}
We have shown that the free energy for the torus is the same as that for ``free" boundary conditions. However, it is possible to construct boundary conditions on rectangles that lead to nonzero, but strictly smaller free energy. One prominent example of this is the Domain Wall boundary conditions, which have been studied extensively due to their relations to combinatorial objects, such as Young diagrams. Under these boundary conditions, the six-vertex model partition functions satisfy recursion relations that make it possible to exactly compute them for finite lattices (see \cite{Zinn00} for more detail). This technique gives a formula for the free energy of this model (see\cite{KorZinn00}), which is different from the one we showed above.
\end{remark}

\subsection{From the six-vertex to the random-cluster model: proof of Theorem~\ref{thm:RCM}}\label{sec:RCM}

\newcommand{\TFK}{\bbT^{\diamond}}
\newcommand{\TV}{\bbT%^{\square}
}

The proof is split into two main steps. 
First, we present a classical correspondence between the six-vertex and random-cluster models using a series of intermediate representations (this correspondence may be found in \cite{Bax89}). 
Then, certain estimates on the random-cluster model are provided,  
that are used to relate its correlation length to quantities obtained via the six-vertex model.

\subsubsection{Correspondence between the random-cluster and six-vertex models}

Fix two integers $M,N$, both even and $q > 4$. 
Notice that the torus $\TV_{N,M}$ is then a bipartite graph. 
Let $V_{\circ}(\TV_{N,M})$ and $V_{\bullet}(\TV_{N,M})$ be a partition 
of the vertices of the graph $\TV_{N,M} + (\frac12,\frac12)$ (that is, $\TV_{N,M}$ translated by $(\frac12,\frac12)$), each containing no adjacent vertices. 
Define the graphs $\TFK_{N,M}$ and $(\TFK_{N,M})^*$ as having vertex sets  $V_{\bullet}(\TV_{N,M})$ and $V_{\circ}(\TV_{N,M})$, respectively, 
and having an edge between vertices $u$ and $v$ if $u$ is a translation of $v$ by $(1,1)$ or $(-1,1)$ (see Fig.~\ref{fig:lattices2}). 
By construction, $(\TFK_{N,M})^*$ is the dual graph of $\TFK_{N,M}$. 
\begin{figure}[htb]
	\begin{center}
		\includegraphics[width=0.25\textwidth, page=2]{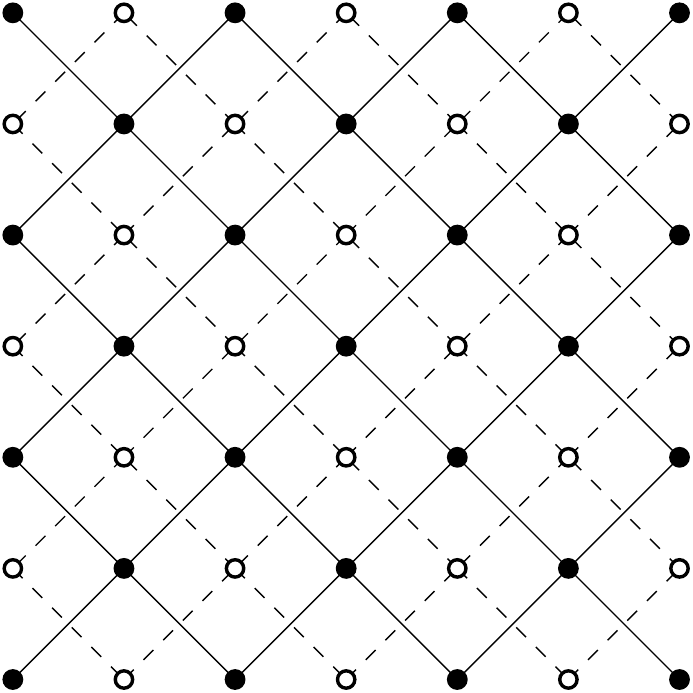} \qquad
		\includegraphics[width=0.25\textwidth, page=1]{TFK.pdf}
	\caption{{\em Left:}  the lattice $\bbT_{N,M}$ used for the six-vertex model.
		{\em Right:} the corresponding lattice for the random-cluster model, $\bbT^{\diamond}_{N,M}$ (in solid lines), 
		and its dual (with dotted lines).}
		\label{fig:lattices2}
		\end{center}
\end{figure}

Let $\Om_{\rm RC}$ be the set of random-cluster configurations on $\TFK_{N,M}$
and $\Omega_{\rm 6V}$ be the set of six-vertex configurations on $\TV_{N,M}$.
We will exhibit a correspondence between $\Om_{\rm RC}$ and  $\Omega_{\rm 6V}$ that will allow us to 
relate the free energy and correlation length of the two models. 
The correspondence consists of several intermediate steps embodied by Lemmas~\ref{lem:correspondence1} --~\ref{lem:correspondence4};
the whole process is depicted in Fig.~\ref{fig:correspondence}.
The ultimate goal of this part is Corollary~\ref{cor:ZFK_vs_Z6v}, which will be the only result used in the proof of Theorem~\ref{thm:RCM}.

In linking the random-cluster and six-vertex models, we will use another type of configurations, called {\em loop configurations}. 
An oriented loop on $\TV_{N,M}$ is a cycle on $\TV_{N,M}$ which is edge-disjoint and non-self-intersecting. 
We may view oriented loops as ordered collections of edges of $E(\TV_{N,M})$, quotiented by cyclic permutations of the indices.  
Un-oriented loops (or simply loops) are oriented loops considered up to reversal of the indices.
A (oriented) loop configuration on $\TV_{N,M}$ is a partition of $E(\TV_{N,M})$ into (oriented) loops.

To each $\om \in \Om_{\rm RC}$ we associate a loop configuration $\om^{(\ell)}$ as in Fig.~\ref{fig:the_six_vertices}. In order to do so, we first construct the dual configuration $\om^*$ on $(\TFK_{N,M})^*$ by setting $\om^*(e^*)=1-\om(e)$, where $e^*$ is the edge of $(\TFK_{N,M})^*$ intersecting the edge $e$ of $\TFK_{N,M}$ in its middle (in words, a dual edge is in $\om^*$ if the corresponding edge of $\TFK_{N,M}$ is not in $\om$, and vice versa).
Then, consider the loop configuration $\om^{(\ell)}$ on $\TV_{N,M}$ created by loops that do not cross the edges of $\om$ or $\om^*$.
It is easy to see that $\om \mapsto \om^{(\ell)}$ is a bijection between $\Om_{\rm RC}$ and the set of all loop configurations.  

Call $\ell(\om)$ the number of different loops of $\om^{(\ell)}$,
and $\ell_0(\om)$ the number of such loops that are not retractable (on the torus) to a point.
Call $\ell_c(\om) := \ell(\om) - \ell_0(\om)$, the number of retractable loops.
We say that $\om$ has a \emph{net} if it has a cluster that winds around $\TFK_{N,M}$ in both directions. 
Set
$$
	s(\om)  =
	\begin{cases}
		0 & \text{ if $\om$ has no net},\\
		1 & \text{ if $\om$ has a net}.
	\end{cases}
$$

Fix $q > 4$. For $\om \in \Om_{\rm RC}$, define the weight of $\om$ in the critical random-cluster model as 
$$w_{\rm RC}(\om) = p_c^{o(\om)}(1 - p_c)^{c(\om)}q^{k(\om)},$$
where we recall that $p_c = \frac{\sqrt{q}}{1+\sqrt{q}}$ (see \cite{BefDum12}).

\begin{figure}[htb]
	\begin{center}
		\includegraphics[width=0.23\textwidth, page=1]{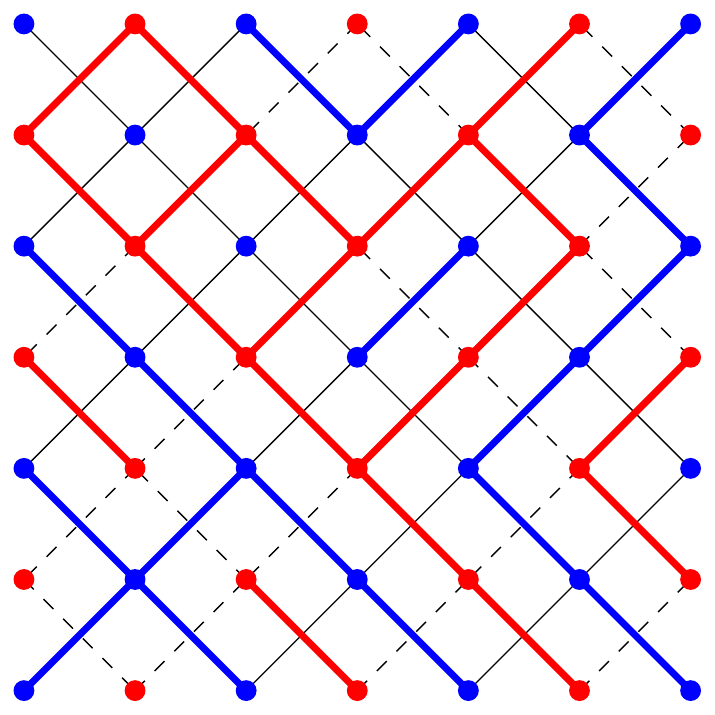}\,
		\includegraphics[width=0.23\textwidth, page=2]{maps.pdf}\,
		\includegraphics[width=0.23\textwidth, page=3]{maps.pdf}\,
		\includegraphics[width=0.23\textwidth, page=4]{maps.pdf}
	\end{center}
	\caption{The different steps in the correspondence between the random-cluster and six-vertex models on a torus. 
	From left to right: A random-cluster configuration and its dual, the corresponding loop configuration,
	an orientation of the loop configuration, the resulting six-vertex configuration. 
	Note that in the first picture, there exist both a primal and dual cluster winding vertically around the torus; 
	this leads to two loops that wind vertically (see second picture); 
	if these loops are oriented in the same direction (as in the third picture), 
	then the number of up arrows on every row of the six-vertex configuration is equal to $\frac{N}2 \pm 1$. 
	}
	\label{fig:correspondence}
\end{figure}

\begin{lemma}\label{lem:correspondence1}
	For all $\om\in \Om_{\rm RC}$, 
	$$
		w_{\rm RC}(\om) = C \sqrt{q}^{ \ell(\om)  + 2s(\om)}, 
	$$
	where $C = q^{\frac{MN}4}(1+\sqrt q)^{-MN}$ is a constant not depending on $\om$. 
\end{lemma}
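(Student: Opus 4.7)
The plan is to first compute $w_{\rm RC}(\om)$ explicitly in terms of $o(\om)$ and $k(\om)$, and then reduce the lemma to an Euler-characteristic-type combinatorial identity on the torus, which I would establish by induction on $o(\om)$. Using $p_c = \sqrt q/(1+\sqrt q)$ (so that $p_c/(1-p_c) = \sqrt q$ and $1-p_c = (1+\sqrt q)^{-1}$), together with the fact that $\TFK_{N,M}$ has $NM$ edges, a direct computation gives
$$
w_{\rm RC}(\om) = p_c^{o(\om)}(1-p_c)^{c(\om)}q^{k(\om)} = (1+\sqrt q)^{-NM}\,\sqrt q^{\,o(\om)}\,q^{k(\om)}.
$$
Comparing with the target $C\sqrt q^{\,\ell(\om)+2s(\om)}$ for $C = q^{NM/4}(1+\sqrt q)^{-NM}$, the lemma is equivalent to the identity
$$
o(\om) + 2 k(\om) \ = \ \tfrac{NM}{2} \ + \ \ell(\om) \ + \ 2 s(\om). \qquad (*)
$$

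The induction on $o(\om)$ has as base case $\om=\emptyset$: here $o=0$, $k = NM/2$, $s = 0$, and the loop configuration consists of one four-cycle around each vertex of $V_\bullet$, so $\ell = NM/2$ and $(*)$ holds. For the inductive step, adding an edge $e$ to $\om$ corresponds, in the loop picture, to toggling the medial pairing at the single vertex of $\TV_{N,M}$ through which $e$ passes; such a local toggle changes $\ell$ by $\pm 1$, the sign being determined by whether the two loop-strands incident to that vertex lie on distinct loops (merging into one, $\Delta\ell = -1$) or on a single loop visiting the vertex twice (splitting, $\Delta\ell = +1$). The topological role of $e$ falls into four cases: (a) $e$ merges two clusters ($\Delta k = -1$, $\Delta s = 0$, $\Delta\ell = -1$); (b) $e$ closes a contractible cycle ($\Delta k = \Delta s = 0$, $\Delta\ell = +1$); (c) $e$ closes a non-contractible cycle either in a cluster with no previous winding or parallel to an existing winding direction ($\Delta k = \Delta s = 0$, $\Delta\ell = +1$); (d) $e$ closes a non-contractible cycle perpendicular to an existing winding in the same cluster, thereby creating a net ($\Delta k = 0$, $\Delta s = +1$, $\Delta\ell = -1$). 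In each case one verifies $\Delta\bigl[o + 2k - \ell - 2s\bigr] = 1 + 2\Delta k - \Delta\ell - 2\Delta s = 0$, so $(*)$ is preserved. The remaining combination $\Delta k = -1$ with $\Delta s = +1$ cannot occur, since on a lattice torus two disjoint clusters with perpendicular winding directions must share a vertex and hence coincide.

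The main obstacle, and heart of the argument, is verifying the sign of $\Delta \ell$ in case (d), which is the only case where the torus topology genuinely enters and where $s$ changes. There the added edge pinches off an annular face of the complement $T^2\setminus\om$ into a disk; the local pairing toggle at the midpoint of $e$ must be shown to merge the two parallel non-contractible boundary loops of that annulus into a single loop bounding the resulting disk. I would check this by tracing the two pre-toggle loops through the new pairing at $e$'s midpoint and confirming that they combine into one. The remaining cases (a)--(c) are more routine once the basic dichotomy ``pre-toggle loops distinct $\Leftrightarrow$ post-toggle loops merged'' has been established, using the fact that each vertex of $\TV_{N,M}$ is visited by exactly two loop-passages and that the sides of the primal/dual edge at that vertex determine which pairs are connected outside.
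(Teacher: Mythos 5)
Your algebraic reduction to the identity $o(\om) + 2k(\om) = \tfrac{NM}{2} + \ell(\om) + 2s(\om)$ matches the paper exactly; where you diverge is in how this Euler-type relation is justified. The paper invokes ``the Euler formula'' in its torus form---with $2s(\om)$ being the genus-dependent correction---in a single line, effectively outsourcing the topological work to that citation. Your plan is to derive the identity inductively, opening one edge at a time and tracking all four quantities simultaneously. This is more elementary and self-contained, and it isolates precisely where the torus topology enters (the transition $\Delta s = +1$); the trade-off is that it requires a case-by-case verification of $\Delta\ell$, whereas the paper avoids case analysis entirely. Your observation that $\Delta k = -1$ and $\Delta s = +1$ cannot co-occur, and the base case, are both correct.

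The place that needs more care is the split/merge dichotomy, and not only in case (d). The claim ``strands on distinct loops $\Rightarrow \Delta\ell = -1$; strands on one loop $\Rightarrow \Delta\ell = +1$'' tacitly assumes that the \emph{external} pairing of the four half-edges at the toggled medial vertex is non-crossing. For an arbitrary fully-packed loop configuration on a torus this is not automatic---a once-punctured torus has room for two disjoint arcs joining opposite legs---and if a crossing external pairing occurred, the toggle would leave $\ell$ unchanged, which would violate the parity of the very identity you are trying to prove ($1 + 2\Delta k - \Delta\ell - 2\Delta s = 0$ forces $\Delta\ell$ odd). In the random-cluster loop gas the crossing pairing is in fact excluded: the loops carry a consistent orientation (say, primal regions always on the left), which forces the four half-edges at any medial vertex to alternate in/out cyclically, and an out-to-in arc can then never join two opposite legs. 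Your phrase about ``the sides of the primal/dual edge at that vertex determine which pairs are connected outside'' gestures at this, but the orientation argument is what actually closes the inductive step and should be written out; once it is in place, the $\Delta\ell$ values in cases (a)--(c) follow, and case (d) reduces to the direct topological check you describe.
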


\begin{proof}Set $V_\bullet=V_\bullet(\bbT_{N,M})$ and $E_\bullet$ to be the set of edges of $\bbT_{N,M}$.	Fix $\om \in \Om_{\rm RC}$. 
%	Then 
%	$$w_{\rm RC}(\om)= (1-p)^{|E|}\sqrt{q}^{V} \sqrt{q}^{ \ell_0(\om) + \ell_c(\om)  + 2s(\om)}$$
	Observe that, due to the Euler formula, 
	\begin{align*}
		2 k(\om) = \ell(\om) - o(\om) + 2s(\om) + |V_{\bullet}|.
%		\\
%		no open edges -> \, NM  = NM/2 - 0 + 0 + NM/2  \quad ok\\
%		open one edge (connects 2 clusters) -> \, -2  = -1 - (+1) + 0 + 0  \quad ok\\
%		open one edge between one cluster (does not create winding) -> \, 0  = 1 - (+1) + 0 + 0  \quad ok\\
%		open one edge between one cluster (creates winding in first direction) -> \, 0  =  1 - (+1) + 0 + 0  \quad ok\\
%		open one edge between one cluster (creates winding in secon direction) -> \, 0  =  -1 - (+1) + 2 + 0  \quad ok\\
	\end{align*}
	This relation offers us an alternative way of writing the random-cluster weight of a configuration:
	$$
		w_{\rm RC}(\om)
		= (1-p_c)^{|E_\bullet|}\left(\frac{p_c}{1-p_c}\right)^{o(\om)}\sqrt{q}^{ \ell(\om) - o(\om) + 2s(\om) + |V_{\bullet}|}.
	$$
	Since $p_c = \frac{\sqrt{q}}{1+\sqrt{q}}$, the above becomes
	$$
		w_{\rm RC}(\om)
		=\Big(\frac{1}{1+\sqrt q}\Big)^{|E_\bullet|}\sqrt{q}^{|V_{\bullet}|} \sqrt{q}^{ \ell(\om)  + 2s(\om)}
		= C \sqrt{q}^{ \ell(\om)  + 2s(\om)},
	$$
	where we have used that $|E_\bullet| = MN$ and $|V_{\bullet}| = MN/2$.
\end{proof}

\newcommand{\ol}{{\includegraphics[scale=0.2]{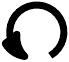}}}
	
Write $\om^{\ol}$ for oriented loop configurations, 
$\ell_0(\om^\ol)$ for the number of non-retractable loops of $\om^\ol$ and 
$\ell_-(\om^\ol)$ and $\ell_+(\om^\ol)$ for the number of retractable loops of $\om^\ol$ 
which are oriented clockwise and counterclockwise, respectively.
We introduce $\lambda>0$ defined by \begin{align}\label{eq:v}
	e^\lambda + e^{-\lambda} = \sqrt{q}.
\end{align} 

For an oriented loop configuration $\om^\ol$, write 
$$ w_{\ol} (\om^{\ol}) = e^{\lambda\ell_+(\om^\ol)} \, e^{-\lambda\ell_-(\om^\ol)}.$$ 

\begin{lemma}\label{lem:correspondence2}
	For any $\om \in \Om_{\rm RC}$, 
	\begin{align*}
		w_{\rm RC}(\om) = C\bigg(\frac{\sqrt{q}}2 \bigg)^{ \ell_0(\om)}q^{s(\om)} \sum_{\om^\ol} w_{\ol} (\om^{\ol}), 
	\end{align*}
	where the sum is over the $2^{\ell(\om)}$ oriented loop configurations $\om^\ol$ obtained by orienting each loop of $\om^{(\ell)}$ in one of two possible ways. 
\end{lemma}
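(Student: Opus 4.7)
The plan is to combine Lemma~\ref{lem:correspondence1} with a direct factorization of $\sum_{\om^\ol} w_\ol(\om^\ol)$ over the loops of $\om^{(\ell)}$. First, I would rewrite the target identity: substituting the formula $w_{\rm RC}(\om) = C\sqrt{q}^{\,\ell(\om)+2s(\om)}$ from Lemma~\ref{lem:correspondence1} and cancelling the common factor $Cq^{s(\om)}=C\sqrt{q}^{\,2s(\om)}$, the claim reduces to
\[
\sum_{\om^\ol} w_\ol(\om^\ol) \;=\; 2^{\ell_0(\om)}\,\sqrt{q}^{\,\ell(\om) - \ell_0(\om)}.
\]

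Next, I would argue this reduced identity by a direct factorization. Choosing an oriented loop configuration $\om^\ol$ compatible with $\om^{(\ell)}$ amounts to choosing independently one of two orientations for each of the $\ell(\om)$ loops. Moreover, the weight $w_\ol(\om^\ol) = e^{\lambda \ell_+(\om^\ol)}e^{-\lambda \ell_-(\om^\ol)}$ splits as a product of per-loop contributions: each retractable loop contributes $e^{\lambda}$ or $e^{-\lambda}$ depending on its orientation, while each non-retractable loop contributes $1$ in either orientation (since, by definition, $\ell_{\pm}$ count only retractable loops). Summing over the $2^{\ell(\om)}$ orientations, the sum factorizes as
\[
\sum_{\om^\ol} w_\ol(\om^\ol) \;=\; (e^\lambda + e^{-\lambda})^{\ell(\om)-\ell_0(\om)} \cdot 2^{\,\ell_0(\om)},
\]
and I would invoke the definition \eqref{eq:v} of $\lambda$, namely $e^\lambda + e^{-\lambda} = \sqrt{q}$, to conclude.

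There is essentially no obstacle here; the only point requiring care is to distinguish clearly between retractable and non-retractable loops in the factorization, since it is precisely the asymmetric treatment of these two types (weight $\sqrt{q}$ versus weight $2$ per loop) that produces the extra factor $(\sqrt{q}/2)^{\ell_0(\om)}$ appearing in the statement.
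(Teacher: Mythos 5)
Your proof is correct and follows essentially the same route as the paper: the paper directly computes $\sum_{\om^\ol} w_\ol(\om^\ol) = 2^{\ell_0(\om)}(e^\lambda+e^{-\lambda})^{\ell_c(\om)} = 2^{\ell_0(\om)}\sqrt{q}^{\,\ell_c(\om)}$ by the same per-loop factorization and then invokes Lemma~\ref{lem:correspondence1} to rewrite this in terms of $w_{\rm RC}(\om)$, whereas you start from the target identity and reduce to the same factorization. The difference is purely organizational.
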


\begin{proof}
	Fix $\om \in \Om_{\rm RC}$ and consider its associated loop configuration $\om^{(\ell)}$. 
	In summing the $2^{\ell(\om)}$ oriented loop configurations $\om^\ol$ associated with $\om^{(\ell)}$, 
	each loop appears with both orientations. 
	Thus, 
	\begin{align*}
		\sum_{\om^\ol} w_{\ol} (\om^{\ol})  
		= \Big(1+1\Big)^{\ell_0(\om)}\big(e^\lambda + e^{-\lambda}\big)^{\ell_c(\om)}
		= 2^{\ell_0(\om)} \sqrt{q}^{ \ell_c(\om)}
		= \frac{1}{C}\bigg(\frac2{\sqrt{q}}\bigg)^{ \ell_0(\om)}q^{-s(\om)} w_{\rm RC}(\om).
	\end{align*}
\end{proof}

Notice now that an oriented loop configuration gives rise to $8$ different configurations at each vertex. 
These are depicted in Fig.~\ref{fig:oriented_loop_vertices}. 
For an oriented loop configuration $\om^\ol$, write $n_{i}(\om^\ol)$ for the number of vertices of type $i$ in $\om^\ol$, 
with $i = 1,\, 2,\, 3,\, 4,\,  5A,\, 5B,\, 6A,\, 6B$.

\begin{figure}[htb]
	\begin{center}
		\includegraphics[width=0.8\textwidth]{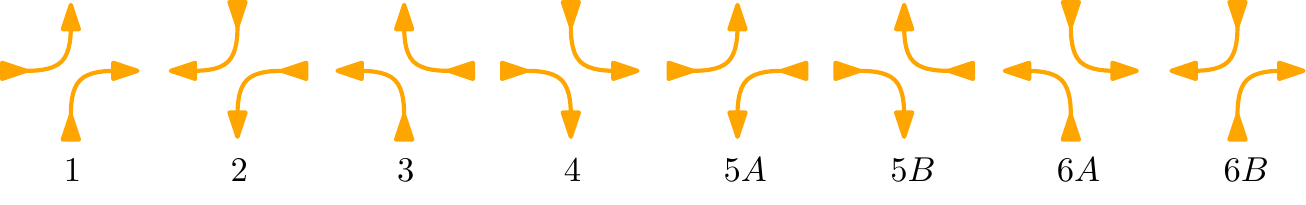}
	\end{center}
	\caption{The $8$ different types of vertices encountered in an oriented loop configuration.}
	\label{fig:oriented_loop_vertices}
\end{figure}

\begin{lemma}\label{lem:correspondence3}
	For any oriented loop configuration $\om^\ol$, 
	\begin{align*}
		w_{\ol}(\om^\ol) = e^{\frac\lambda2[n_{5A}(\om^\ol) + n_{6A}(\om^\ol)]}\ e^{-\frac\lambda2[n_{5B}(\om^\ol) + n_{6B}(\om^\ol)]}.
	\end{align*}
\end{lemma}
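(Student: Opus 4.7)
My plan is to exploit the classical turning-number identity for oriented loops, applied to the smooth closed curves obtained from $\om^\ol$ by rounding the corners at each vertex. Since $w_\ol(\om^\ol) = e^{\lambda(\ell_+(\om^\ol) - \ell_-(\om^\ol))}$ by definition, the statement reduces to proving
\[
\ell_+(\om^\ol) - \ell_-(\om^\ol) = \tfrac12\bigl[n_{5A}(\om^\ol) + n_{6A}(\om^\ol) - n_{5B}(\om^\ol) - n_{6B}(\om^\ol)\bigr],
\]
which I would establish by a double accounting of the total tangent-winding (\emph{total turning}) of all oriented loops of $\om^\ol$.

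For the global count: a retractable loop is a simple closed curve bounding a disk on the torus, and the Hopf Umlaufsatz gives it total turning $+2\pi$ if traversed counterclockwise and $-2\pi$ if clockwise. A non-retractable loop contributes $0$: lifted to the flat universal cover $\bbR^2$, the lift is an arc whose endpoints differ by a period of $\bbT_{N,M}$ and whose tangent directions agree at the two endpoints, so the accumulated turning vanishes (alternatively, Gauss--Bonnet on the flat annulus bounded by two parallel non-retractable loops). Summing across loops yields total turning equal to $2\pi(\ell_+(\om^\ol) - \ell_-(\om^\ol))$.

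For the local count: smooth each vertex as follows. At a type-$1$, $2$, $3$ or $4$ vertex the two arcs pass straight through and each contributes turning $0$. At a type-$5A$ or $6A$ vertex the two arcs form corners, each turning by $+\pi/2$, so such a vertex contributes $+\pi$ in total; at a type-$5B$ or $6B$ vertex the analogous contribution is $-\pi$. The A/B labeling in Fig.~\ref{fig:oriented_loop_vertices} is set up precisely so that this sign convention holds. Aggregating over vertices of $\bbT_{N,M}$ gives total turning $\pi\bigl(n_{5A}+n_{6A}-n_{5B}-n_{6B}\bigr)$.

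Equating the two counts and dividing by $2\pi$ produces the identity above, which proves the lemma. I expect the main point requiring care to be the vanishing of the turning of non-retractable loops, which is why I favour the universal-cover lift: it makes the cancellation transparent from the translation-equivariance of tangent directions, and avoids having to invoke a more delicate Gauss--Bonnet argument on the torus.
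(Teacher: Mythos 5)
Your proof is correct and takes essentially the same approach as the paper: a double count of the total tangent winding (turning) of all oriented loops, comparing the global contribution ($\pm 2\pi$ for retractable loops, $0$ for non-retractable ones) with the vertex-by-vertex local contributions ($0$ for types $1$--$4$, $\pm\pi$ for types $5A/6A$ and $5B/6B$). The paper is simply more terse, asserting the winding of each loop is the sum of the turns along it without invoking the Umlaufsatz or universal-cover lift by name.
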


\begin{proof}
	Fix an  oriented loop configuration $\om^\ol$. 
	Notice that the retractable loops of $\om^\ol$ which are oriented clockwise have total winding $-2\pi$, 
	while those oriented counterclockwise have winding $2\pi$. 
	Loops which are not retractable have total winding $0$. 
	Write $W(\ell)$ for the winding of a loop $\ell \in \om^\ol$. Then
	\begin{align}\label{eq:winding}
		w_{\ol}(\om^\ol) =  \exp\Big(\frac{\la}{2\pi}\sum_{\ell \in \om^\ol}W(\ell)\Big),
	\end{align}
	where the sum is over all loops $\ell$ of $\om^\ol$.
	The winding of each loop may be computed by summing the winding of every turn along the loop. 
	The compounded winding of the two pieces of paths appearing in the different diagrams of Fig.~\ref{fig:oriented_loop_vertices} are 
	\begin{itemize*}
		\item vertices of type $1,\dots,4$: total winding $0$;
		\item vertices of type $5A$ and $6A$: total winding $\pi$;
		\item vertices of type $5B$ and $6B$: total winding $-\pi$.
	\end{itemize*} 
	The total winding of all loops may therefore be expressed as 
	\begin{align*}
		\sum_{\ell \in \om^\ol}W(\ell) =\pi\ \big[ n_{5A}(\om^\ol) + n_{6A}(\om^\ol) - n_{5B}(\om^\ol) - n_{6B}(\om^\ol)\big].
	\end{align*}
	The lemma follows from the above and \eqref{eq:winding}.
\end{proof}

For the final step of the correspondence, notice that each diagram in Fig.~\ref{fig:oriented_loop_vertices} 
corresponds to a six-vertex local configuration (as those depicted in Fig.~\ref{fig:the_six_vertices}). 
Indeed, configurations $5A$ and $5B$ correspond to configuration $5$ in Fig.~\ref{fig:the_six_vertices} 
and configurations $6A$ and $6B$ correspond to configuration $6$ in Fig.~\ref{fig:the_six_vertices}.
The first four configurations of Fig.~\ref{fig:oriented_loop_vertices} 
correspond to the first four in Fig.~\ref{fig:the_six_vertices}, respectively.

Thus, to each oriented loop configuration $\om^\ol$ is associated a six-vertex configuration $\vec{\om}$. 
Note that the map associating $\vec{\om}$ to $\om^\ol$ is not injective since
there are $2^{n_5(\vec\om) + n_6(\vec\om)}$ oriented loop configurations corresponding to each $\vec{\om}$.

Define the parameter $c$ of the six-vertex model by 
\begin{align}\label{eq:c}
	c = e^{\frac\la2} + e^{-\frac\la2} = \sqrt{2 + \sqrt{q}}.
\end{align}
(The latter equality is obtained from~\eqref{eq:v} by straightforward computation.) 
As in the rest of the paper, $a = b =1$ are fixed. Write $w_{6V}(\vec\om)$ instead of simply $w(\vec\om)$ for the weight of a six-vertex configuration $\vec\om$ as defined in~\eqref{eq:w_6V}.

\begin{lemma}\label{lem:correspondence4}
	For all six-vertex configurations $\vec{\om}$ (that is configurations obeying the ice rule), 
	$$ w_{6V}(\vec{\om}) = \sum_{\om^\ol} w_{\ol}(\om^\ol),$$
	where the sum is over all oriented loop configurations $\om^\ol$  corresponding to $\vec{\om}$.
\end{lemma}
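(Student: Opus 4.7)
The plan is to observe that the map $\om^\ol \mapsto \vec{\om}$ is purely local at each vertex: at a vertex of type $1,2,3,4$, the six-vertex data already determines how the two incoming and two outgoing arrows pair up into arcs of loops, so there is a unique local lift; at a vertex of type $5$ or $6$, the two incoming (resp.\ outgoing) arrows share a corner, and there are exactly two ways to pair them up into arcs, giving the two configurations $5A/5B$ (resp.\ $6A/6B$) in Fig.~\ref{fig:oriented_loop_vertices}. Thus, for a fixed $\vec{\om}$, the set of oriented loop configurations $\om^\ol$ projecting to $\vec{\om}$ is canonically in bijection with $\{A,B\}^{V_5(\vec\om) \cup V_6(\vec\om)}$, where $V_5, V_6$ denote the sets of vertices of type~$5$ and~$6$. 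In particular, the count $2^{n_5(\vec\om)+n_6(\vec\om)}$ mentioned before the lemma is recovered.

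Next, I would apply Lemma~\ref{lem:correspondence3}, which expresses $w_\ol(\om^\ol)$ as a product of local factors: $e^{\lambda/2}$ at each vertex of type $5A$ or $6A$ and $e^{-\lambda/2}$ at each vertex of type $5B$ or $6B$, and $1$ at vertices of the other four types. Since the $A/B$ choices at distinct vertices are independent in the above bijection, the sum factorizes:
\begin{align*}
\sum_{\om^\ol \to \vec{\om}} w_\ol(\om^\ol)
&= \prod_{v \in V_5(\vec\om) \cup V_6(\vec\om)} \bigl(e^{\lambda/2} + e^{-\lambda/2}\bigr)
= c^{\,n_5(\vec\om)+n_6(\vec\om)},
\end{align*}
using the definition $c = e^{\lambda/2} + e^{-\lambda/2}$ from~\eqref{eq:c}.

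Finally, since $a = b = 1$, the six-vertex weight reduces to $w_{6V}(\vec{\om}) = c^{n_5(\vec\om)+n_6(\vec\om)}$, which matches the expression above and completes the proof. There is really no obstacle here: the argument is a local bookkeeping check, with all the nontrivial content packaged in the winding computation of Lemma~\ref{lem:correspondence3} and in the choice of $c$ made in~\eqref{eq:c}. The only point that deserves a brief sentence is the justification that the pairing-choices at different vertices are genuinely independent, which follows because the edges incident to distinct vertices are disjoint, so altering the pairing at one vertex does not affect any other vertex's local configuration (even though it can globally merge or split loops, this has no bearing on the product form of $w_\ol$).
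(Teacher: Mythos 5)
Your proof is correct and takes essentially the same approach as the paper: both identify the oriented loop configurations over a fixed $\vec{\om}$ with independent $A/B$ choices at the type-$5$ and type-$6$ vertices, invoke the local weight formula of Lemma~\ref{lem:correspondence3}, and factorize the sum into $\prod_{u} (e^{\lambda/2} + e^{-\lambda/2}) = c^{\,n_5+n_6} = w_{6V}(\vec{\om})$. You merely run the chain of equalities in the opposite direction and spell out the vertex-local bijection a bit more explicitly, which is a fine presentational choice.
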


\begin{proof}
	Fix a six-vertex configuration $\vec{\om}$. 
	Let $N_{5,6}(\vec{\om})$ be the set of vertices of type $5$ and $6$ in $\vec\om$.
	Then, due to the choice of $c$, 
	\begin{align*}
		w_{6V}(\vec{\om}) 
		=\prod_{u \in N_{5,6}(\vec\om)} \big(e^{\frac\la2} + e^{-\frac\la2}\big)
		=\sum_{\eps \in \{\pm 1\}^{N_{5,6}(\vec\om)}}\, \prod_{u \in N_{5,6}(\vec\om)} e^{\frac\la2\eps(u)}
		= \sum_{\om^\ol} w_{\ol}(\om^\ol).
	\end{align*}
	For the last equality above, 
	notice that each choice of $\eps \in \{\pm 1\}^{N_{5,6}(\vec\om)}$ corresponds to a choice of type $A$ or $B$ 
	for every vertex of $N_{5,6}(\vec\om)$, 
	and hence to one of the $2^{n_5(\vec\om) + n_6(\vec\om)}$ oriented loop configurations corresponding to $\vec\om$. 
\end{proof}

\newcommand{\Up}{U}

For a six-vertex configuration $\vec\om$ on $\TV_{N,M}$, write $|\vec\om|$ for the number of up arrows on each row 
(recall that this number is the same on all rows). 
The notation obviously extends to oriented loop configurations. 
Moreover, for $r \geq 0$, set 
\begin{align*}
	Z_{6V}^{(r)}(N,M) = \sum_{\vec\om:\, |\vec\om| = \frac{N}2-r} w_{6V}(\vec\om).
\end{align*}
For $\om \in \Om_{\rm RC}$, let $2 \Up(\om)$ be the total number of times loops of $\om^{(\ell)}$ wind vertically around $\TFK_{N,M}$
(due to periodicity, this number is necessarily even).

\begin{corollary}\label{cor:ZFK_vs_Z6v}
	Let $q > 4$ and set $c = \sqrt{2 + \sqrt q}$.  Fix $r\ge1$. For $N,M$ even, set $C = q^{\frac{MN}4}(1+\sqrt q)^{-MN}$.
	Then 
	\begin{align*}
		\sum_{\om \in \Om_{\rm RC}} w_{\rm RC}(\om) \bigg(\frac2{\sqrt{q}} \bigg)^{\ell_0(\om)}q^{ - s(\om)} &= C\ Z_{6V}(N,M);
		\label{eq:cori}\tag{i} \\
		\sum_{\om \in \Om_{\rm RC}:\, \Up(\om) = 1} w_{\rm RC}(\om) \bigg(\frac2{\sqrt{q}} \bigg)^{\ell_0(\om)}q^{ - s(\om)} 
		&\leq 4 C\ Z^{(1)}_{6V}(N,M);
		\label{eq:corii}\tag{ii}\\
		\sum_{\om \in \Om_{\rm RC}:\, \Up(\om) \geq r} w_{\rm RC}(\om) \bigg(\frac2{\sqrt{q}} \bigg)^{\ell_0(\om)}q^{ - s(\om)} 
		&\geq  C\ Z^{(r)}_{6V}(N,M).
		\label{eq:coriii}\tag{iii}
	\end{align*}
\end{corollary}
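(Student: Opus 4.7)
}
The strategy is to chain the four previous lemmas to convert each quantity on the left-hand side into a sum over oriented loop configurations, and then regroup using Lemma~\ref{lem:correspondence4} into a sum over six-vertex configurations. Combining Lemmas~\ref{lem:correspondence1} and~\ref{lem:correspondence2} gives, for every $\om\in\Om_{\rm RC}$,
\begin{equation*}
	w_{\rm RC}(\om)\Big(\tfrac{2}{\sqrt q}\Big)^{\ell_0(\om)} q^{-s(\om)} = C\sum_{\om^\ol}w_\ol(\om^\ol),
\end{equation*}
where the sum runs over the $2^{\ell(\om)}$ orientations of the loop configuration $\om^{(\ell)}$. Since $\om\mapsto\om^{(\ell)}$ is a bijection, summing over $\om\in\Om_{\rm RC}$ and over all orientations is the same as summing over all oriented loop configurations on $\TV_{N,M}$. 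Grouping the oriented configurations that project to a common six-vertex configuration $\vec\om$ and applying Lemma~\ref{lem:correspondence4} immediately yields (i).

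For (ii) and (iii), the only new ingredient is the link between $\Up(\om)$ and $|\vec\om|$. I would first verify the key identity
\begin{equation*}
	2|\vec\om|-N \;=\; \sum_{\ell\in\om^\ol}w_v(\ell),
\end{equation*}
by counting signed crossings of any fixed horizontal row of vertical edges: an oriented loop $\ell$ with vertical winding $w_v(\ell)$ contributes precisely $w_v(\ell)$ to the net flux of up-pointing arrows, and retractable loops contribute $0$. In particular, an orientation of $\om^{(\ell)}$ produces a six-vertex configuration in $Z^{(r)}_{6V}$ iff $\sum_\ell w_v(\ell)=-2r$, which forces the total (unoriented) vertical winding $2\Up(\om)\ge 2r$, i.e.\ $\Up(\om)\ge r$. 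For (iii) one then writes
\begin{equation*}
	C\,Z^{(r)}_{6V}(N,M)\;=\;C\sum_{\om:\,\Up(\om)\ge r}\;\sum_{\substack{\om^\ol\,\text{orients}\,\om^{(\ell)}\\ |\vec\om|=N/2-r}}w_\ol(\om^\ol)\;\le\;\sum_{\om:\,\Up(\om)\ge r}w_{\rm RC}(\om)\Big(\tfrac{2}{\sqrt q}\Big)^{\ell_0(\om)}q^{-s(\om)},
\end{equation*}
the inequality coming from dropping the constraint $|\vec\om|=N/2-r$ in the inner sum.

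For (ii), the main point is to argue that if $\Up(\om)=1$ then $\ell_0(\om)\le 2$: because disjoint essential simple closed curves on the torus are isotopic, all non-retractable loops of $\om^{(\ell)}$ share a common primitive homology class $(p,q)$, and from $2\Up(\om)=\sum_\ell|w_v(\ell)|=|q|\cdot\#\{\text{non-retractable loops}\}=2$ one gets either one loop with $|q|=2$ or two loops with $|q|=1$. In either case, among the $2^{\ell_0(\om)}$ orientations of the non-retractable loops exactly one gives $\sum w_v(\ell)=-2$, and because $w_\ol$ is independent of these orientations (retractable loops alone contribute to the winding), the full unrestricted sum $\sum_{\om^\ol}w_\ol(\om^\ol)$ equals $2^{\ell_0(\om)}\le 4$ times its restriction to orientations producing $r=1$. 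Summing over $\om$ with $\Up(\om)=1$ and using Lemma~\ref{lem:correspondence4} then gives (ii).

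The hard part is the homotopy-class lemma used in (ii) and, more broadly, the careful bookkeeping relating the integer $w_v(\ell)$ of each oriented loop on the torus to the conserved row-arrow count of the associated six-vertex configuration; the remainder is a routine regrouping of sums.
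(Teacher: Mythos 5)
Your proof is correct and follows essentially the same route as the paper: rearrange Lemma~\ref{lem:correspondence2} to express $w_{\rm RC}(\om)(2/\sqrt q)^{\ell_0(\om)}q^{-s(\om)}$ as $C$ times the sum of $w_\ol$ over all orientations of $\om^{(\ell)}$, regroup via Lemma~\ref{lem:correspondence4} for (i), and for (ii)--(iii) use the flux identity linking the signed vertical winding $\sum_\ell w_v(\ell)$ to $2|\vec\om|-N$, which gives $2\Up(\om)\geq |\sum_\ell w_v(\ell)|=2r$ whenever $|\vec\om|=N/2-r$. Your treatment of (ii) is slightly more explicit than the paper's: you invoke that disjoint essential simple closed curves on the torus are parallel (hence all non-retractable loops share a primitive class $(p,q)$ with $q\neq 0$) to deduce $\ell_0(\om)\leq 2$, covering both the $\ell_0=2,|q|=1$ and the $\ell_0=1,|q|=2$ possibilities, whereas the paper simply asserts that $\Up(\om)=1$ produces exactly two vertically-winding loops. (In fact the $\ell_0=1$ case is impossible here because the mod-$2$ homology class of the union of all loops must vanish, but since your argument only needs the upper bound $2^{\ell_0}\leq 4$ this does no harm.) Everything else, including the bookkeeping by which $w_\ol$ depends only on retractable-loop orientations and the dropping of the $|\vec\om|=N/2-r$ constraint for (iii), matches the paper.
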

Note that Items~\eqref{eq:corii} and \eqref{eq:coriii} imply that for $r=1$, the left and right sides of \eqref{eq:corii} are of the same order. Item~\eqref{eq:coriii} may appear technical for $r>1$, but will be used later to bound the correlation length of the random-cluster model from below.

\begin{proof}
	Let us start by proving~\eqref{eq:cori}. Due to Lemmas~\ref{lem:correspondence2} and~\ref{lem:correspondence4}, we have
	\begin{align*}
		\sum_{\om \in \Om_{\rm RC}} w_{\rm RC}(\om) \bigg(\frac2{\sqrt{q}} \bigg)^{\ell_0(\om)}q^{ - s(\om)}
		= C \, \sum_{\om^\ol} w_{\ol}(\om^\ol)
		= C \, \sum_{\vec\om} w_{6V}(\vec\om) = Z_{6V}(N,M),
	\end{align*}
	where the sums in the second and third terms run over all oriented loop configurations and six-vertex configurations, respectively.
	%This proves~\eqref{eq:cori}.
	\smallskip 
	
	Let us now prove~\eqref{eq:corii}. 
	We restrict ourselves to random-cluster configurations with $\Up(\om) = 1$. 
	For such configuration $\om$, $\om^{(\ell)}$ has two loops winding vertically around $\TV$.
	Moreover, for any oriented loop configuration $\om^\ol$ which is compatible with $\om^{(\ell)}$, 
	we may consider the oriented loop configuration $\tilde\om^\ol$, obtained from $\om^\ol$ 
	by orienting the two vertically-winding loops downwards. 
	Then, $w_{\ol}(\om^\ol) = w_{\ol}(\tilde\om^\ol)$ and there are four oriented loop configurations corresponding to any $\tilde\om^\ol$. 
	Thus,
	$$w_{\rm RC}(\om)  \bigg(\frac2{\sqrt{q}} \bigg)^{\ell_0(\om)}q^{ - s(\om)} = 4C \, \sum_{\om^\ol} w_{\ol}(\om^\ol),$$
	where the sum in the right-hand side is over oriented loop configurations corresponding to $\om$
	in which the two vertically-winding loops are oriented downwards. 
	Since all other loops do not wind vertically around $\TV$, 
	the total number of up arrows on any given row of such an oriented loop configuration is $N/2 -1$. 
	Thus
	\begin{align*}
		\sum_{\om \in \Om_{\rm RC}:\, \Up(\om) = 1} w_{\rm RC}(\om) \bigg(\frac2{\sqrt{q}} \bigg)^{\ell_0(\om)}q^{ - s(\om)}
		\leq 4C \sum_{\om^\ol:\, |\om^\ol| =N/2 - 1}w_{\ol} (\om^\ol) 
		= 4 C \ Z^{(1)}_{6V}(N,M).
	\end{align*}
	\smallskip 
	
	Finally we show~\eqref{eq:coriii}. 
	If $\om^\ol$ is an oriented loop configuration with $|\om^\ol| = N/2-r$, then, by the same up-arrow counting argument as above, 
	the corresponding random-cluster configuration $\om$ has $\Up(\om) \geq r$. Thus, 
	\begin{align*}
		C Z^{(r)}_{6V}(N,M) 
		= C \sum_{\om^\ol:\, |\om^\ol| = N/2 - r} w_{\ol} (\om^\ol)
		\leq \sum_{\om \in \Om_{\rm RC}:\, \Up(\om) \geq r} w_{\rm RC}(\om) \bigg(\frac2{\sqrt{q}} \bigg)^{\ell_0(\om)}q^{ - s(\om)}.
	\end{align*}
\end{proof}

\subsubsection{Random-cluster computations}

In this section, we relate the correlation length of the random-cluster model to the rates of growth of the quantities $Z^{(r)}_{6V}(N,M)$ defined in the previous section. We will need some notation. 

Let $a,b$ be two vertices and $C$ be a subset of vertices. Let $\{a \xleftrightarrow{C} b\}$ be the event that there exists a path of vertices in $C$, starting at $a$ and finishing at $b$ composed of edges in $\omega$ only. In this case, we say that $a$ is {\em connected} to $b$ in $C$. We also set $\{A\xleftrightarrow{C}B\}$ for the union on $a\in A$ and $b\in B$ of $\{a \xleftrightarrow{C} b\}$. When $C$ is the whole graph, we omit it from the notation.

Consider the sub-lattice $\mathbb L$ of $\bbZ^2$ made of vertices with sum of coordinates even, and edges between two vertices if one is the translate of the other by $(1,1)$ or $(1,-1)$ \footnote{This lattice is the local limit of the graphs $\TFK_{N,M}$ as $M$ and $N$ tend to infinity. It is a version of $\sqrt 2\bbZ^2$ rotated by an angle of $\pi/4$.}. This is not the same as in the introduction, but we believe that since this change is restricted to this section, it should not lead to any confusion. We will view $\TFK_{N,M}$ as having vertices $(i,j)$ with $i,j$ integers of even sum, taken modulo $N$ and $M$ respectively. Also, we write $[a,b]\times[c,d]$ for the subgraph of $\bbL$ composed of vertices $(i,j)$ with $a\le i\le b$ and $c\le j\le d$. Let $\phi^0_{\bbL,p_c,q}$ be the infinite-volume random-cluster measure on $\mathbb L$ with free boundary conditions.

Write  $\xi(q)$ for the correlation length of the critical random-cluster model on this rotated lattice defined by 
\begin{align}\label{eq:cor_len_def}
	\xi (q)^{-1} = \lim_{n\rightarrow\infty} -\tfrac1{2n}\log\phi_{\bbL,p_c,q}^0[0\longleftrightarrow(0,2n)].
\end{align}
By the definition of the lattice $\mathbb L$ on which $\phi^0_{\bbZ^2,p_c,q}$ is defined, the right-hand side corresponds to the left-hand side of \eqref{eq:aaf}. The limit may be shown to exist by sub-additivity arguments. 

The two following lemmas will be used to prove Theorem~\ref{thm:RCM}. Unlike the rest of the paper, both lemmas below are based on probabilistic estimates specific to the random-cluster model. 
We refer the reader to \cite{Gri06} for a manuscript on the subject, and \cite{Dum13} for an account of recent progress. We will apply repeatedly classical facts about the random-cluster model, and give each time the precise reference in \cite{Gri06}.

\begin{lemma}\label{lem:no_loop_density}
	For all $q \geq 1$,
	\begin{align}\label{eq:no_loop_density}
		\lim_{N\to \infty}\lim_{M \to \infty} 
		\frac{1}{M}\log \phi_{\TFK_{N,M},p_c,q} \left[ \big(\tfrac2{\sqrt{q}} \big)^{\ell_0(\om)}q^{ - s(\om)} \right]
		= 0.
	\end{align}
\end{lemma}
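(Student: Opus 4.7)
The plan is to use the partition-function identity from Corollary~\ref{cor:ZFK_vs_Z6v}(i),
\[
\phi_{\TFK_{N,M},p_c,q}\Big[\big(\tfrac{2}{\sqrt q}\big)^{\ell_0(\omega)}q^{-s(\omega)}\Big] \;=\; \frac{C\,Z_{6V}(N,M)}{Z_{RC}(N,M)},\qquad Z_{RC}(N,M):=\sum_\omega w_{RC}(\omega),
\]
and to sandwich the logarithm of this expression between bounds that, divided by $M$, vanish in the iterated limit. For the upper bound I would observe that, when $q\ge 4$, both factors $(2/\sqrt q)^{\ell_0}$ and $q^{-s}$ are pointwise at most $1$, so $\log\phi\le 0$ uniformly in $M,N$. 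The case $1\le q<4$ is more delicate but is not needed for the main application of the lemma, so I would focus on $q>4$.

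For the lower bound, I would apply Jensen's inequality to the concave function $\log$ to obtain
\[
\tfrac{1}{M}\log\phi\big[(\tfrac{2}{\sqrt q})^{\ell_0}q^{-s}\big]
\;\ge\; -\log\!\big(\tfrac{\sqrt q}{2}\big)\,\tfrac{\phi[\ell_0]}{M} \;-\; \tfrac{\log q}{M}\,\phi[s].
\]
Since $s\le 1$, the second term vanishes as $M\to\infty$ and the problem reduces to showing that $\limsup_{M\to\infty}\phi[\ell_0]/M$ tends to $0$ as $N\to\infty$.

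To estimate $\phi[\ell_0]$, I would use the geometric interpretation of non-retractable loops: each is the boundary of an annular region of $\TFK_{N,M}$ containing a primal or dual cluster that wraps around the torus. By periodicity in the long direction, the expected density per unit height of \emph{horizontally} wrapping loops is bounded (up to a factor $2$) by the probability that a fixed vertex $v$ lies in a cluster wrapping around the short direction, while \emph{vertically} wrapping loops number at most $N$ and contribute only $O(N/M)\to 0$ after dividing by $M$. A surface-tension argument should show that, for $q>4$, the horizontal wrapping probability tends to $0$ as $N\to\infty$: configurations on a slab of width $N$ containing an extra wrapping cluster are exponentially suppressed in $N$ relative to configurations without one, so the density of wrapping loops per unit height is of order $e^{-c(q)N}$.

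The main obstacle will be making this surface-tension step rigorous. The torus measure $\phi_{\TFK_{N,M}}$ is not stochastically comparable to either infinite-volume measure $\phi^0_{\mathbb L,p_c,q}$ or $\phi^1_{\mathbb L,p_c,q}$, so one cannot simply import exponential decay of connections. One would need either to average over the positions of ``free-like/wired-like'' phase interfaces on the cylinder, combining FKG inequalities with boundary-condition monotonicity, or to bypass the probabilistic route by using Corollary~\ref{cor:ZFK_vs_Z6v}(ii)--(iii) together with the gap between $\Lambda_0(N)$ and $\Lambda_1(N)$ provided by Theorem~\ref{thm:6V}. Either way, the crux is turning information about eigenvalue ratios of the six-vertex transfer matrix into a bound on the density of wrapping events in the random-cluster model; once this is in place, the Jensen lower bound together with the pointwise upper bound sandwich $\frac{1}{M}\log\phi[\cdot]$ between quantities tending to $0$ in the iterated limit.
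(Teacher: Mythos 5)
Your Jensen reduction is sound as far as it goes: the pointwise bound for $q\ge 4$ plus $\log\phi[X]\ge\phi[\log X]$ does reduce the problem to showing $\limsup_{M}\phi_{\bbT^{\diamond}_{N,M},p_c,q}[\ell_0(\om)]/M\to 0$ as $N\to\infty$. But that estimate \emph{is} the content of the lemma, and you leave it unproven. Moreover, the mechanism you invoke for it is not available at this stage of the paper: ``exponential suppression in $N$ of an extra wrapping cluster'' is a surface-tension/exponential-decay statement at criticality, which for $q>4$ is essentially part of what the paper is setting out to prove, and for $1\le q\le 4$ is false. Your fallback --- using Corollary~\ref{cor:ZFK_vs_Z6v}~(ii)--(iii) and the gap between $\Lambda_0(N)$ and $\Lambda_1(N)$ from Theorem~\ref{thm:6V} --- does not rescue the argument either, because the lemma itself is precisely the bridge that lets one compare $Z_{\rm RC}(N,M)$ with $C\,Z_{6V}(N,M)$ (via Corollary~\ref{cor:ZFK_vs_Z6v}~(i)); the transfer-matrix asymptotics control $Z_{6V}$, not $Z_{\rm RC}$, so invoking them here is close to circular. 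A secondary issue: the statement is for all $q\ge1$, and for $1\le q<4$ your pointwise upper bound fails ($2/\sqrt q>1$), so an upper bound also requires a tail estimate on $\ell_0$.

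The idea you are missing is that no exponential decay is needed. The paper bounds $\phi_{\bbT^{\diamond}_{N,M},p_c,q}(\ell_0(\om)\ge\delta M)$ directly: close all edges meeting a fixed horizontal and a fixed vertical line (finite energy, cost $\mathbf{c}^{N+M}$) to pass to the free measure on an $N\times M$ rectangle; note that $\delta M$ non-retractable loops force at least $n=\delta M-N$ \emph{disjoint} clusters crossing the rectangle horizontally (at most $N$ of the loops can wind vertically); then explore these crossing clusters one by one, using the domain Markov property and comparison of boundary conditions, so that each new crossing costs at most $\phi^0_{\bbL}(0\leftrightarrow\partial\Lambda_N)$, giving the bound $\binom{M}{n}\,\phi^0_{\bbL}(0\leftrightarrow\partial\Lambda_N)^{n}$ up to the finite-energy factor. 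The only input is the soft fact that $\phi^0_{\bbL}(0\leftrightarrow\partial\Lambda_N)\to0$ as $N\to\infty$ (no percolation for the critical free measure, valid for all $q\ge1$), which, raised to the power $\delta M/2$, beats the entropy $2^M$ and the weight $(2/\sqrt q)^{\ell_0}$; one then splits the expectation on $\{\ell_0<\delta M\}$ and its complement and lets $\delta\to0$. In short: the correct route converts ``many non-retractable loops'' into ``many disjoint crossings of a width-$N$ strip surrounded by free-type boundary conditions,'' whose costs multiply, rather than attempting to control a single wrapping event on the torus by a decay estimate you do not have.
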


\begin{lemma}\label{lem:cor_length}
	For all $q \geq 1$ and $r\ge1$, we have that
	\begin{align}
		&\liminf_{N\to\infty}\liminf_{M \to \infty}\frac{1}{M}\log\phi_{\TFK_{N,M},p_c,q} ( \Up(\om) = 1) \geq -\xi(q)^{-1}, \label{eq:cor_length1}\\
		&\limsup_{N\to\infty}\limsup_{M \to \infty}\frac{1}{M}\log\phi_{\TFK_{N,M},p_c,q} ( \Up(\om) \geq r) \leq -(r-1) \xi(q)^{-1}.
		\label{eq:cor_length2}
	\end{align}
\end{lemma}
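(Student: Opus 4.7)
My plan is to prove the two inequalities by combining the loop representation of $\om$ with probabilistic estimates on vertical crossings of the torus, relating them to the infinite-volume correlation length $\xi(q)$ defined by~\eqref{eq:cor_len_def}.

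For the upper bound~\eqref{eq:cor_length2}: the event $\{\Up(\om) \geq r\}$ corresponds to the existence of at least $2r$ disjoint non-contractible vertical loops in $\om^{(\ell)}$, which partition $\TFK_{N,M}$ into $2r$ cyclic strips in which primal and dual vertical crossings alternate. In particular, $\{\Up \geq r\}$ implies the existence of $r$ disjoint primal vertical crossings (and $r$ disjoint dual ones) around the torus. I would bound the probability of this event by a union bound over the possible positions of the crossings, combined with an iterative DLR-based decomposition: one crossing is used as a `pivot' and, conditional on it, the remaining $r-1$ crossings are controlled on narrower cylinders, using stochastic monotonicity and the self-duality of $\phi_{\TFK_{N,M},p_c,q}$ at $p_c = \sqrt q/(1+\sqrt q)$ to compare with the free boundary two-point function on $\bbL$. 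By~\eqref{eq:cor_len_def}, each of the $r-1$ non-pivotal crossings contributes a factor $e^{-M/\xi(q)+o(M)}$, while the polynomial number of position choices is absorbed into the $o(M)$ correction as $M\to\infty$ with $N$ fixed.

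For the lower bound~\eqref{eq:cor_length1}: I would construct an explicit event $E \subset \{\Up(\om) = 1\}$ with $\phi_{\TFK_{N,M},p_c,q}(E) \geq e^{-M/\xi(q)+o(M)}$. The event $E$ forces, in two disjoint vertical halves of the torus, a primal vertical crossing in one half and a dual vertical crossing in the other, together with explicit `walls' of closed (respectively open) edges at the interface between the halves that guarantee exactly two non-contractible vertical loops appear in $\om^{(\ell)}$. The lower bound on $\phi_{\TFK_{N,M},p_c,q}(E)$ follows from the FKG inequality together with the stochastic dominance $\phi_{\TFK_{N,M},p_c,q} \geq \phi^0_{\bbL,p_c,q}$ for increasing local events (which holds via DLR and monotonicity in boundary conditions), combined with the definition~\eqref{eq:cor_len_def} giving $\phi^0_{\bbL,p_c,q}(0 \leftrightarrow (0,2n)) \geq e^{-2n/\xi(q)+o(n)}$; self-duality at $p_c$ ensures the dual crossing is controlled with the same rate.

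The main obstacle will be the upper bound. The torus measure $\phi_{\TFK_{N,M},p_c,q}$ does not admit a direct stochastic comparison with either the free or the wired boundary measure, and no BK-type inequality is available for the random-cluster model when $q > 1$, which makes it delicate to control the joint occurrence of several disjoint increasing events. The key ingredients to circumvent this are the self-duality at $p_c$, which makes the probabilities of primal and dual vertical crossings symmetric, combined with a careful iterative use of the DLR property, which produces the $r - 1$ independent factors of $e^{-M/\xi(q)}$ while using one of the crossings as a pivot around which to decompose the torus.
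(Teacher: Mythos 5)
Your upper bound proposal matches the paper's approach in essence: the paper conditions on all edges along a single horizontal row $\bbR\times\{-\tfrac12\}$ being closed (finite energy cost $O(N)$, which vanishes after $M\to\infty$), reducing $\{\Up\geq r\}$ to an event $V(x_1,\dots,x_r)$ that $r$ marked points on one side of the resulting cylinder are connected across it by $r$ disjoint clusters; it then uses the domain Markov property and comparison between boundary conditions to peel off $r-1$ factors of $\phi^0_\bbL(0\leftrightarrow(0,M))$, with the first cluster acting as the ``pivot'' you describe (whence the factor $r-1$ rather than $r$). Your description of a union over positions plus a DLR decomposition captures this correctly.

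For the lower bound, there is a genuine gap in your proposal. You need to produce both a primal vertically winding cluster and a dual one, and you say the dual crossing ``is controlled with the same rate'' $e^{-M/\xi(q)+o(M)}$ by self-duality. If you were to pay the correlation-length cost for each, your event would only have probability $e^{-2M/\xi(q)+o(M)}$, which gives a lower bound of $-2\xi(q)^{-1}$ — strictly worse than the claim. You also invoke FKG to combine the primal and dual crossings, but one of these events is increasing and the other decreasing, so FKG cannot be used to bound their joint probability from below; and the ``walls'' at the interface between the two vertical halves, if they consist of forcing $O(M)$ edges to fixed states, cost $e^{-cM}$ by finite energy and would ruin the rate. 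The paper's workaround is to confine the dual winding loop to a \emph{thin} strip $[0,3n]\times[0,M]$ of fixed width $3n$: there the winding loop is built by stacking $M/n$ crossings of $3n\times n$ rectangles, each with probability bounded below by a constant $c/(9n^2)$ depending on $n$ only, so the total cost is $(c/9n^2)^{M/n} = e^{-O(M\log n / n)}$, which is sub-exponential in $M$ once one lets $n\to\infty$ last. Only the primal winding pays the full price: after conditioning on the dual winding event (this replaces your FKG step, via the domain Markov property and comparison of boundary conditions), the right half of the torus carries a measure dominating a random-cluster measure with free boundary on the sides, and the primal winding is built by chaining $M/(2n)$ copies of the two-point function $\phi^0_\bbL(0\leftrightarrow(0,2n))$, giving precisely $e^{-M/\xi(q)+o(M)}$. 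Finally, the finite-energy insertion (closing or opening all but one edge along a single horizontal line) costs only $O(N)$ in the exponent, which is why the paper first takes $M\to\infty$, then $N\to\infty$, then $n\to\infty$.
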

\begin{remark}
    Inequality \eqref{eq:cor_length1} should actually be an equality. 
    Unfortunately, we did not manage to derive the reverse inequality using the random-cluster model only. 
    In order to circumvent this fact, in the proof of Theorem~\ref{thm:RCM} we will rely on \eqref{eq:cor_length2} (see Remark~\ref{rmk:rr}).
%   
%   	In retrospect, it may be deduced from the theorem that \eqref{eq:cor_length1} is actually an equality. 
%    We believe that proving the equality using only the random-cluster model is an interesting question. 
\end{remark}

%\begin{remark}Inequality \eqref{eq:cor_length1} should be an equality. Unfortunately, we did not manage to derive the reversed inequality using the random-cluster model only. We believe that proving this equality is an interesting question on the random-cluster model. In order to circumvent the fact that we have an inequality only, we will rely on \eqref{eq:cor_length2} in the proof of Theorem~\ref{thm:RCM} below  (see Remark~\ref{rmk:rr}).
%\end{remark}

In both proofs below, $q \geq 1$ and $p = p_c(q)$ are fixed, and we drop them from the notation of the random-cluster measure. 

\begin{proof}[Lemma~\ref{lem:no_loop_density}]
Fix $q \geq 1$. Since $q^{ - s(\om)}\ge q^{-1}$, it is sufficient to prove that 	
\begin{align*}
\lim_{N\rightarrow\infty}\lim_{M\rightarrow \infty}\tfrac1M\log \phi_{\TFK_{N,M}} \Big[\big(\tfrac2{\sqrt{q}} \big)^{\ell_0(\om)} \Big] =0.
\end{align*}
Fix $\delta>0$. To start, we will bound $\phi_{\TFK_{N,M}}(\ell_0(\om)\ge \de M)$. 

By closing all the edges intersecting $\bbR\times\{-\tfrac12\}$ and $\{-\tfrac12\}\times\bbR$, we transform the random-cluster model on $\TFK_{N,M}$ into the random-cluster model with free boundary conditions on the rectangle ${\rm R}_{N,M}^\diamond=[0,N-1]\times [0,M-1]$. The finite-energy property \cite[Eq.~(3.4)]{Gri06} implies the existence of a constant ${\bf c}>0$ independent of $N,M$ and $\de$ such that
\begin{equation}\label{eq:kkl}
\phi_{\TFK_{N,M}}(\ell_0(\om)\ge \de M)\le 
{\bf c}^{M+N}\phi_{{\rm R}_{N,M}^\diamond}^0(\exists n\text{ disjoint clusters crossing ${\rm R}_{N,M}$ horizontally}),
\end{equation}
where $n=\de M-N$. 
The appearance of $-N$ in the definition of $n$ is due to the fact that at most $N$ of the $\ell_0(\om)$ non-retractable loops intersect the horizontal line $\bbR\times\{-\frac12\}$.

For $x_1,\dots,x_r$ on the left side $\partial_{L}$ of ${\rm R}_{N,M}^\diamond$, let $H(x_1,\dots,x_r)$ be the event that $x_j$ is connected to the right side $\partial_{R}$ of ${\rm R}_{N,M}^\diamond$ for $j=1,\dots, r$ 
	and that the clusters of $x_1,\dots, x_r$ are all distinct.
If $\om$ is a configuration contributing to the right-hand side of~\eqref{eq:kkl}, 
then there exist $n$ points $x_1,\dots,x_n$ on $\partial_{L}$ such that $H(x_1,\dots,x_n)$ occurs. 

	Write $\mathsf C_{x_j}$ for the cluster of the point $x_j$. Then, for any $j \geq 1$ and any subset $C$ of vertices of ${\rm R}_{N,M}^\diamond$, we have that
	\begin{align*}
		\phi_{{\rm R}_{N,M}^\diamond}^{0} [H(x_1,\dots, x_{j+1})\big| H(x_1,\dots, x_j)\,,\,\bigcup_{i\le j}\mathsf C_{x_i}=C]
		&=\phi^{0}_{{\rm R}_{N,M}^\diamond\setminus C}(x_{j+1}\longleftrightarrow\partial_R) \\
		&\leq \phi^{0}_{\bbL}(0\longleftrightarrow \partial\Lambda_N),
	\end{align*}
where in the first equality, we used the domain Markov property\footnote{This argument is classical and involves the fact that the cluster of a point is measurable in terms of edges with one or two endpoints in that cluster (see Fig.~\ref{fig:explorations} for an illustration of this argument).} \cite[Lem.~4.13]{Gri06}  and in the second, the comparison between boundary conditions \cite[Lem.~4.14]{Gri06} and the invariance under translations of $\phi^0_{\bbL}$ \cite[Thm.~4.19]{Gri06}. By summing over possible values of $C$, we deduce that
\begin{align*}
		\phi_{{\rm R}_{N,M}^\diamond}^{0} [H(x_1,\dots, x_{j+1}) \big| H(x_1,\dots, x_j)]&\leq \phi^{0}_{\bbL}(0\longleftrightarrow \partial\Lambda_N).
	\end{align*}
Induction on $j<n$ implies that
	$$\phi_{{\rm R}_{N,M}^\diamond}^{0} [H(x_1,\dots, x_n)]\le \phi^{0}_{\bbL}(0\longleftrightarrow \partial\Lambda_N)^n.$$
After taking the union over all possible $x_1,\dots,x_n$ on $\partial_L$, we deduce from~\eqref{eq:kkl} that
\begin{align*}
	\phi_{\TFK_{N,M}}(\ell_0(\om)\ge \de M) & \le {\bf c}^{M+N}\times\binom{M}{n}\times \phi^0_{\bbL}(0\longleftrightarrow\partial\Lambda_N)^n \\ & \leq \big(2 {\bf c}^2 \left[\phi^0_{\bbL}(0\longleftrightarrow\partial\Lambda_N)\right]^{\delta/2}\big)^M,
\end{align*}
where we bound ${M \choose n}$ by $2^M$, and increase $M$ until $n > \delta M/2$.
Now, it is classical \cite[Thm.~6.17]{Gri06} that $\phi^0_{\bbL}(0\longleftrightarrow\partial\Lambda_N)$ tends to 0 as $N$ tends to infinity so that for $N$ large enough, 
\begin{equation*}
\phi_{\TFK_{N,M}}(\ell_0(\om)\ge \de M) \leq \left(\frac{1}{2}\right)^M.
\end{equation*}
This implies that for any $\de>0$, provided that $N$ is large enough,
\begin{equation}\label{eq:kkll}
    \limsup_{M\rightarrow\infty}\tfrac1M\Big|\log \phi_{\TFK_{N,M}}\Big[\big(\tfrac2{\sqrt{q}} \big)^{\ell_0(\om)} \Big]\Big|
    \le  \limsup_{M\rightarrow\infty}\tfrac1M\Big|\log \Big[\big(\tfrac2{\sqrt{q}} \big)^{\delta M} + \Big( \tfrac1{\sqrt{q}} \Big)^{M} \Big]\Big| \\ \leq \Big|\log \left(\tfrac{\sqrt{q}}{2}\right)\Big|\delta 
\end{equation}
%\im{since we did not assume $q > 4$, the above may be positive or negative. I added an absolute value}
which concludes the proof by letting $\de$ tend to 0.
\end{proof}

\begin{figure}
    \begin{center}
    \includegraphics[width = 0.3\textwidth]{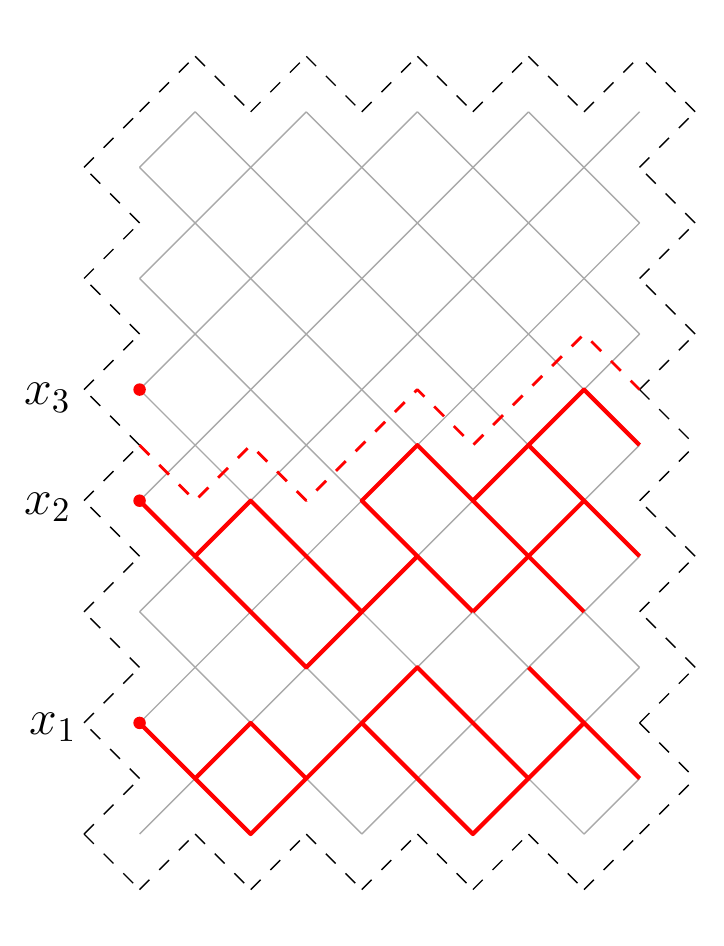}
    \includegraphics[width = 0.3\textwidth]{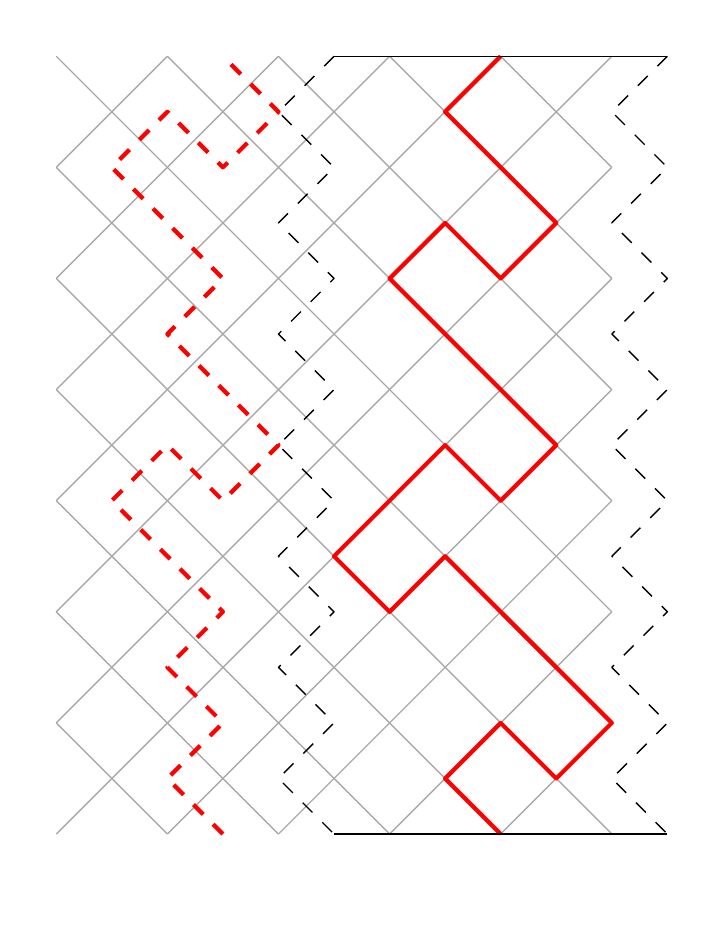}
    \includegraphics[width = 0.3\textwidth]{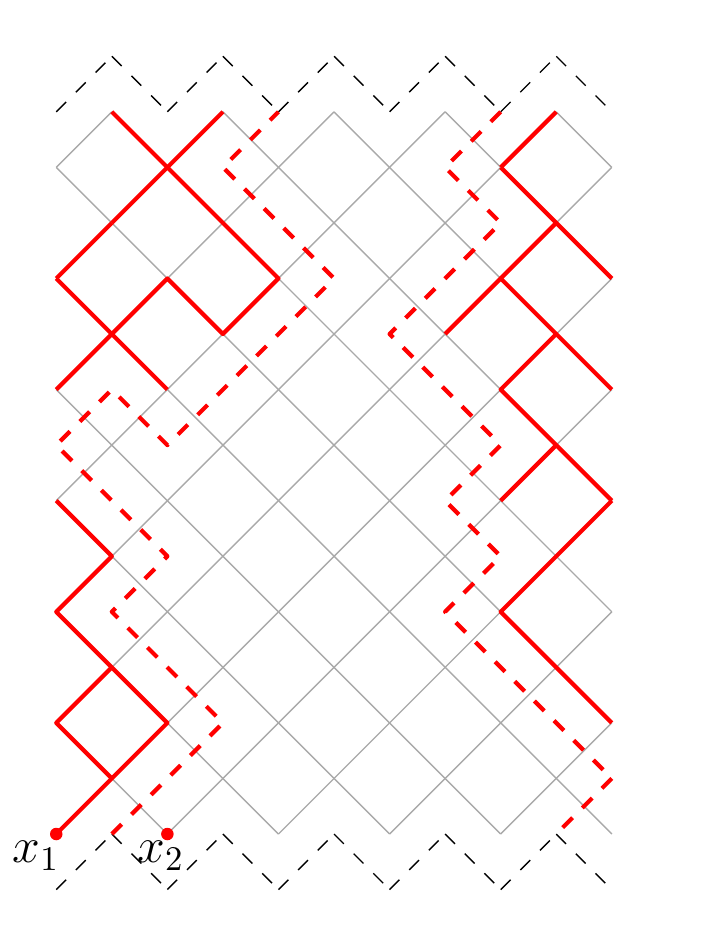}
    \caption{
    \emph{Left:} Exploring one by one the disjoint, horizontally crossing clusters contributing to~\eqref{eq:kkl}. Each new cluster (for instance the one of $x_3$) is surrounded by free boundary conditions. 
    \emph{Middle:} To create $\om$ with $U(\om)  = 1$, it is sufficient to ensure that $B$ occurs (dotted red line), and that, conditionally on $B$, $C$ also occurs.  The latter is more likely than the occurrence of a top-bottom crossing in the black rectangle with free boundary conditions on the lateral sides. 
    \emph{Right:} In exploring $H(x_1,\dots, x_r)$, every cluster crossing vertically the torus (except the first) is surrounded by free boundary conditions. }
    \label{fig:explorations}
    \end{center}
\end{figure}

Before starting the proof of Lemma~\ref{lem:cor_length}, we wish to highlight the fact that the random-cluster model enjoys a self-duality relation for planar graphs when $p=p_c$ \cite[Sec.~6.1]{Gri06}. On the torus, this self-duality can be restated as follows. Consider the measure 
$$
	\widetilde \phi_{\TFK_{N,M}}(\omega)
	=\frac{ p_c^{o(\om)}(1 - p_c)^{c(\om)}q^{k(\om)}q^{-s(\om)}}{\widetilde Z(N,M)},
% 	=\frac{C \sqrt q^{\ell(\omega)}{\widetilde Z(N,M)}. % Since $\ell(\omega) = \ell(\omega^*)$, self-duality is valid. 
$$
where $\widetilde Z(N,M)$ is the appropriate partition function. If $\omega$ is sampled according to $\widetilde \phi_{\TFK_{N,M}}(\omega)$, then $\omega^*$ is sampled according to the measure on $(\TFK_{N,M})^*$ obtained by translating $\widetilde \phi_{\TFK_{N,M}}(\omega)$ by $(1,0)$ (this claim follows directly from Lemma~\ref{lem:correspondence1}). 

Also note that $(\TFK_{N,M})^*$ can be obtained from $\TFK_{N,M}$ from reflections from either vertical or horizontal lines. We will use this observation several times in the next proof to transfer the probability of events defined in terms of $\omega$ to similar claims for $\omega^*$ (and vice versa).

\begin{proof}[\eqref{eq:cor_length1} of Lemma~\ref{lem:cor_length}]	
For a rectangle $R$, let $V_R$ (resp.~$H_R$) be the event that there exists a path in $\omega$ included in $R$ from the bottom to the top of $R$ (resp.~from the left to the right). We begin by proving\footnote{This claim was proved in the special case $N=M$ in \cite{BefDum12}. Here, some additional care must be taken since the torus has different vertical and horizontal size.} that there exists a constant $c>0$ such that for any $n,N,M$ with $3n\le \min\{N,M\}$, 
%\im{Later we will need  $8n \le N,M$, but not now. }
\begin{equation}\label{eq:crossing}
	\phi_{\TFK_{N,M}}(V_{[0,3n]\times[0,n]})\ge c.
\end{equation}
Indeed, if this is not the case, then the probability that some rectangle $[0,3n]\times[0,n]$ contains a path in $\omega^*$ from the left to the right  is larger than $1-c$. Therefore, the self-duality and the symmetry between $\TFK_{N,M}$ and its dual (mentioned above) imply that 
\begin{equation*}
	\phi_{\TFK_{N,M}}(H_{[0,3n]\times[0,n]})\ge \tfrac{1-c}{q^2}.
\end{equation*}
The FKG inequality \cite[Thm.~3.8]{Gri06} implies that 
$$\phi_{\TFK_{N,M}}\big(H_{[0,3n]\times[0,n]}\cap H_{[0,3n]\times[2n,3n]}\big)\ge \big(\tfrac{1-c}{q^2}\big)^2.$$
Now consider the bottom-most (resp.~top-most) path $\Gamma$ (resp.~$\Gamma'$) in $\omega$ crossing $[0,3n]\times[0,n]$ (resp.~$[0,3n]\times[2n,3n]$) from left to right. Fix two possible realizations $\gamma$ and $\gamma'$ of $\Gamma$ and $\Gamma'$. Conditioned on $\Gamma=\gamma$ and $\Gamma'=\gamma'$, the law of edges in $[0,3n]^2$ between $\gamma$ and $\gamma'$ is stochastically dominating the random-cluster measure with wired boundary conditions on the bottom and top of $[0,3n]^2$, and free on the left and right. Therefore, one may use self-duality in the square $[0,3n]^2$ to show that the probability that there is an open path connecting $\gamma$ to $\gamma'$ is larger or equal to $1/(1+q^2)$. This reasoning is classical, we refer for instance to \cite{BefDum12}. In particular, this path crosses $[0,3n]\times[2n,3n]$ from bottom to top. Overall, summing over all possible $\gamma$ and $\gamma'$ gives
\begin{align*}
    \phi_{\TFK_{N,M}}(V_{[0,3n]\times[2n,3n]})
    &\ge \tfrac1{1+q^2}\times \phi_{\TFK_{N,M}}\big(H_{[0,3n]\times[0,n]}\cap H_{[0,3n]\times[2n,3n]}\big)\\
    &\ge \tfrac1{1+q^2}\times\big(\tfrac{1-c}{q^2}\big)^2.
\end{align*}
Provided that $c=c(q)>0$ is chosen sufficiently small, this claim contradicts the assumption that \eqref{eq:crossing} was wrong. 
In conclusion, we  proved \eqref{eq:crossing} and we can proceed with the proof of~\eqref{eq:cor_length1}.
% may assume \eqref{eq:crossing}. 
\medbreak

Fix $8n \leq \min\{M,N\}$.
As a consequence of \eqref{eq:crossing}, there exists  $x\in[0,3n]\times\{0\}$ and $y\in[0,3n]\times\{n\}$ such that 
$$\phi_{\TFK_{N,M}}\big(x \xleftrightarrow{[0,3n]\times[0,n]} y\big)\ge \frac{c}{9n^2}.$$
The FKG inequality \cite[Thm.~3.8]{Gri06} and the symmetry under reflections give that
\begin{align*}
\phi_{\TFK_{N,M}}\big(x \xleftrightarrow{[0,3n]\times[0,2n]} x+(0,2n)\big)&\ge \phi_{\TFK_{N,M}}\big(x \xleftrightarrow{[0,3n]\times[0,n]} y\big)\times\phi_{\TFK_{N,M}}\big(y \xleftrightarrow{[0,3n]\times[n,2n]} x+(0,2n)\big)\\
&\ge\Big(\frac{c}{9n^2}\Big)^2.
\end{align*}	
Write $M=2nk+r$ with $k\in\bbN$ and $0\le r<2n$. We can use the FKG inequality $k$ times to deduce that
\begin{align*}
	\phi_{\TFK_{N,M}}\big(x \xleftrightarrow{[0,3n]\times[0,2nk]} x+(0,2nk)\big)&\ge\Big(\frac{c}{9n^2}\Big)^{M/n}.
\end{align*}
Let $A$ be the event that $\omega$ contains a loop winding vertically around $\TFK_{N,M}$ and staying in $[0,3n]\times[0,M]$ (seen as a subgraph of $\TFK_{N,M}$), and that every edge of $\TFK_{N,M}$ intersecting $\bbR\times\{-\tfrac12\}$ but one is closed in $\omega$.

Since this event can be obtained from $\{x \xleftrightarrow{[0,3n]\times[0,2nk]} x+(0,2nk)\}$ by opening (in $\omega$) a self-avoiding path of length $2n-r$ zigzaging vertically between $x+(0,2nk)$ and $x$, and then closing all the remaining edges intersecting $\bbR\times\{-\tfrac12\}$, the finite-energy property \cite[Eq.~(3.4)]{Gri06} implies that 
$$\phi_{\TFK_{N,M}}(A)\ge {\bf c}^{2n+N}\times\Big(\frac{c}{9n^2}\Big)^{M/n},$$
for some constant ${\bf c} > 0$ only depending of $q$. 
Let $B$ be the event that $\omega$ does not contain any path from left to right in $[0,3n]\times[0,M]$, and that every edge of $\TFK_{N,M}$ intersecting $\bbR\times\{-\tfrac12\}$ but one is open in $\omega$. Using the self-duality and the symmetry between $\TFK_{N,M}$ and its dual, we deduce that
\begin{align}\label{eq:Hstar}
	\phi_{\TFK_{N,M}}(B)& \ge \tfrac1{q^2}\times{\bf c}^{2n+N}\times\Big(\frac{c}{9n^2}\Big)^{M/n}.
\end{align}

We are near the end: the event $B$ induces the existence of a path in $\omega^*$ winding vertically around the torus and contained in its left half. As \eqref{eq:Hstar} indicates, this comes at a (relatively) low cost. Next we also construct a vertically winding path contained in $\omega$, which will induce a vertically winding loop.

For each $j\in\bbN$, define $y_j:=(3N/4,2nj)$ and let $C$ be the event that $y_j$ is connected to $y_{j+1}$ (in $\omega$) for every $0\le j\le M/(2n)$\footnote{We define $y_j$ for every $j\in\bbN$, but we see $y_j$ as an element of $\TFK_{N,M}$, hence we think of $2nj$ as being taken modulo $M$.}.
Notice that the event $U(\omega)=1$ occurs if $B$ and $C$ occur together.  Therefore, 
$$\phi_{\TFK_{N,M}} ( \Up(\om) = 1) \geq \phi_{\TFK_{N,M}} (B\cap C)\ge \phi_{\TFK_{N,M}} (B)\times\phi_{\TFK_{N,M}} (C|B).$$
We now wish to bound the term $\phi_{\TFK_{N,M}} (C|B)$.
The comparison between boundary conditions \cite[Lem.~4.14]{Gri06} implies that the measure on $[N/2,N]\times[0,M]$ induced by $\phi_{\TFK_{N,M}}(\cdot|B)$ dominates the random-cluster measure $\phi^{\rm mix}_{[N/2,N]\times[0,M]}$ on $[N/2,N]\times[0,M]$ with free boundary conditions on the left and right sides, and wired on the top and bottom sides. Using the FKG inequality and the comparison between boundary conditions one more time, we find that
\begin{align*}
	\phi_{\TFK_{N,M}}(C|B)
	&\geq \prod_{j=0}^{\lfloor M/(2n)\rfloor} \phi^{\rm mix}_{[N/2,N]\times[0,M]}(y_j\longleftrightarrow y_{j+1})\\
	&\ge \phi_{\Lambda_{N/4}}^0(0\longleftrightarrow (0,2n))^{1+M/(2n)}.
\end{align*}
Overall, we deduce that 
$$
    \phi_{\TFK_{N,M}} ( \Up(\om) = 1) 
    \geq \tfrac1{q^2}\times{\bf c}^{2n+N}\times\Big(\frac{c}{9n^2}\Big)^{M/n}
    \times \phi_{\Lambda_{N/4}}^0(0\longleftrightarrow (0,2n))^{1+M/(2n)}.
$$
This in turn implies that 
$$
	\liminf_{M\rightarrow \infty}\frac1M\log\phi_{\TFK_{N,M}} ( \Up(\om) = 1) 
	\geq\tfrac1{n}\log(\tfrac{c}{9n^2})+\tfrac1{2n}\log\phi_{\Lambda_{N/4}}^0(0\longleftrightarrow (0,2n)).
$$
As $N$ tends to infinity (while $n$ is fixed), $\phi_{\Lambda_{N/4}}^0$ converges to $\phi_{\bbL}^0$ \cite[Thm.~4.19]{Gri06}. Thus
\begin{align*}
	\liminf_{N\rightarrow\infty}\liminf_{M\rightarrow \infty}\frac1M\log \phi_{\TFK_{N,M}} ( \Up(\om) = 1)  
	\geq\tfrac1{n}\log(\tfrac{c}{9n^2})+\tfrac1{2n}\log\phi_{\bbL}^0(0\longleftrightarrow (0,2n)).	
\end{align*}
Letting $n$ tend to infinity yields~\eqref{eq:cor_length1}.	
\end{proof}
	
\begin{proof}[\eqref{eq:cor_length2} of Lemma~\ref{lem:cor_length}]
	Fix $r \geq 1$ and consider $M,N \geq 2r$ even integers.
	Denote by $x_i = (2i,0)$ (for $i =1,\dots, N/2$) the points on the lower side of the torus $\TFK_{N,M}$
	and set $y_j:=x_j + (1,M-1)$.
	
	Let $\phi_{\bbH_{N,M}}^{0}$ be the measure on $\TFK_{N,M}$ conditioned on all edges intersecting 
	$\bbR\times\{-\tfrac12\}$ being closed; it may be viewed as a random-cluster measure on a cylinder $\bbH_{N,M}$ of height $M$ with free boundary conditions on the top and bottom. 
	
	Let $V(x_1,\dots,x_r)$ be the event that $x_j\longleftrightarrow y_j$ for $j=1,\dots, r$ 
	and that the clusters of $x_1,\dots, x_r$ are all distinct.
The finite-energy property \cite[Eq.~(3.4)]{Gri06} implies that
	\begin{equation} \label{eq:af}
	\limsup_{M \to \infty}\frac{1}{M}\log\phi_{\TFK_{N,M}}(\Up(\om) \geq r)  
	= \limsup_{M\to\infty}\frac1M\log \phi_{\bbH_{N,M}}^{0}[V(x_1,\dots,x_r)].
	\end{equation}
	Write $\mathsf C_{x_j}$ for the cluster of the point $x_j$. Then, for any $j \geq 1$, an exploration argument similar to that of Lemma~\ref{lem:no_loop_density} (and therefore omitted\footnote{It involves again the domain Markov property \cite[Lem.~4.13]{Gri06} and the comparison between boundary conditions \cite[Lem.~4.14]{Gri06}.}) implies
	\begin{align}\nonumber
		\phi_{\bbH_{N,M}}^{0} [V(x_1,\dots,x_{j+1})\big| V(x_1,\dots, x_j)] &=\phi_{\bbH_{N,M}}^{0} [y_{j+1} \in \mathsf C_{x_{j+1}} \text{ and } x_1,\dots, x_{j} \notin \mathsf C_{x_{j+1}} \big| V(x_1,\dots, x_j)] \\
		&\leq \phi^{0}_{\bbL}(y_{j+1}\longleftrightarrow x_{j+1})\nonumber \\
		&\leq \phi^{0}_{\bbL}(0\longleftrightarrow y_0),\label{eq:Vxxx}
	\end{align}
	where $y_0 = (1,M-1)$.
	Applying this $r-1$ times yields
	\begin{align*}%\label{eq:Vxxx}
		\phi_{\bbH_{N,M}}^{0} [V(x_1,\dots,x_r)] &\leq \phi^{0}_{\bbL}[0\longleftrightarrow y_0]^{r-1}\le {\bf c}^{r-1}\times\phi^{0}_{\bbL}[0\longleftrightarrow (0,M)]^{r-1}.
	\end{align*}
	(In the second inequality, we used the finite-energy  one last time).
	The conclusion follows from \eqref{eq:af}, the previous inequality, and the definition of $\xi(q)$. 
\end{proof}

\begin{remark}
Note that in  order to obtain~\eqref{eq:Vxxx}, we need to explore the cluster $\mathsf C_{x_1}$, i.e.~we need $j\ge1$. Indeed, we used that conditioned on $V(x_1,\dots,x_j)$, the boundary conditions in $\TFK_{N,M}\setminus(\mathsf C_{x_1}\cup\dots \cup\mathsf C_{x_j})$ are dominated by free boundary conditions at infinity. The fact that we do not obtain a bound on $\phi_{\bbH_{N,M}}^{0} [V(x_1)]$ (in this case, the boundary conditions are cylindrical and cannot be easily compared to the free boundary conditions at infinity) is the reason why we obtain $r-1$ instead of $r$ in \eqref{eq:cor_length2}. 
\end{remark}
\subsubsection{Proof of Theorem~\ref{thm:RCM}}

	Fix $q >4$. 
	By \cite{DumSidTas16}, for points 1 and 2 it is sufficient to show that $\xi(q)<\infty$.
	We therefore focus on point 3, that is we compute $\xi(q)^{-1}$ explicitly and show that it is equal to 
	$$R(q):=\lambda + 2 \sum_{k=1 }^\infty \tfrac{(-1)^k}k \tanh(k\lambda)>0,$$
	where $\la>0$ satisfies $e^\la+e^{-\la}=\sqrt q$. We will show that this quantity is positive and analyse its asymptotics in Section~\ref{sec:Fourier}.

	% Recall that $\la > 0$ is chosen to satisfy $e^\la+e^{-\la}=\sqrt q$.
	We will refer to the associated six-vertex model, with $c = \sqrt{2 + \sqrt q}$. 
	Write $Z_{\rm RC}(N,M)$ for the partition function of the random-cluster model with parameters $p_c,q$ on $\TFK_{N,M}$, that is 
	\begin{align*}
		Z_{\rm RC}(N,M) := \sum_{\om \in \Om_{\rm RC}} w_{\rm RC}(\om).
	\end{align*}
	\paragraph{Lower bound on the inverse correlation length}	Equation~\eqref{eq:cor_length1} may be rewritten as 
	\begin{align*}
		\xi(q)^{-1} \geq -\liminf_{N\to\infty}\liminf_{M \to \infty}\frac{1}{M}\log \frac{\sum_{\om: \, \Up(\om) = 1} w_{\rm RC}(\om)}{Z_{\rm RC}(N,M)}.
	\end{align*}
	Since all configurations with $\Up(\om) =1$ have exactly two non-retractable loops and no net, 
	Corollary~\ref{cor:ZFK_vs_Z6v}~\eqref{eq:corii} implies that the numerator above is smaller than 
	\begin{align*}
		\frac{\sqrt{q}}2 q^{\frac{MN}4}(1+\sqrt q)^{-MN} Z^{(1)}_{6V}(N,M).
	\end{align*}
	Furthermore, in light of Corollary~\ref{cor:ZFK_vs_Z6v}~\eqref{eq:cori}, 
	Lemma~\ref{lem:no_loop_density} may be rewritten as 
	\begin{align}\label{eq:Z6v_ZFK}
		\lim_{N\to \infty}\lim_{M\to \infty} \frac{1}M \log \frac{q^{\frac{MN}4}(1+\sqrt q)^{-MN}\ Z_{6V}(N,M)}{Z_{\rm RC}(N,M)} = 1.
	\end{align}
%	This implies that the free energy of the random-cluster model may be computed as
%	\begin{align*}
%		\lim_{N\to \infty}\lim_{M\to \infty} \frac{1}{NM} \log {Z_{\rm RC}(N,M)} 
%		&=\tfrac14\log q - \log(1+\sqrt{q}) + \lim_{N\to \infty}\lim_{M\to \infty} \frac{1}{NM} \log {Z_{6V}(N,M)}\\
%		&=\tfrac14\log q - \log(1+\sqrt{q}) + f(1,1,c).
%	\end{align*}
%	
%	Equation~\eqref{eq:cor_length1} may be rewritten as 
%	\begin{align*}
%		\xi(q)^{-1} \geq -\liminf_{N\to\infty}\liminf_{M \to \infty}\frac{1}{M}\log \frac{\sum_{\om: \, \Up(\om) = 1} w_{\rm RC}(\om)}{Z_{\rm RC}(N,M)}.
%	\end{align*}
Therefore, we may  write
	\begin{align}\label{eq:xi_lower}
		\xi(q)^{-1} \geq -\liminf_{N\to\infty} \liminf_{M \to \infty}\frac{1}{M}\log \frac{Z^{(1)}_{6V}(N,M)}{Z_{6V}(N,M)} 
		= -\liminf_{N\to\infty}\ \log \frac{\La_1(N)}{\La_0(N)}\stackrel{\eqref{eq:aggg}}=R(q).
	\end{align}

	\paragraph{Upper bound on the inverse correlation length.}	
	For all $r \geq 2$,~\eqref{eq:cor_length2} may be written as
	\begin{align*}
		(r-1)\xi(q)^{-1} \leq -\limsup_{N\to\infty}\limsup_{M \to \infty}\frac{1}{M}\log \frac{\sum_{\om: \, \Up(\om) \geq r} w_{\rm RC}(\om)}{Z_{\rm RC}(N,M)}.
	\end{align*}
	Using Corollary~\ref{cor:ZFK_vs_Z6v}~\eqref{eq:coriii} and~\eqref{eq:Z6v_ZFK} again, we find 
	\begin{align*}
		(r-1)\xi(q)^{-1} 
		\leq -\limsup_{N\to\infty}\limsup_{M \to \infty}\ \frac{1}{M}\log \frac{Z^{(r)}_{6V}(N,M)}{Z_{6V}(N,M)}
		= -\limsup_{N\to\infty}\ \log \frac{\La_r(N)}{\La_0(N)}\stackrel{\eqref{eq:aggg}}=rR(q).
	\end{align*}
	The bound above being valid for all $r \geq 2$, one may divide by $r-1$ and take $r$ to infinity. 
	The resulting upper bound on $\xi(q)^{-1}$ matches the lower bound of~\eqref{eq:xi_lower}, and the theorem is proved. 

\begin{remark}\label{rmk:rr}  
	As mentioned before, \eqref{eq:xi_lower} should, in fact, be an equality. This would allow us to compute $\xi(q)^{-1}$ using nothing but the asymptotics of $\Lambda_0(N)$ and $\Lambda_1(N)$, and require no control of $\Lambda_r(N)$ for $r \geq 2$.
	However, since we did not manage to derive the reversed inequality of \eqref{eq:cor_length1} (and hence of \eqref{eq:xi_lower}) using only the random-cluster model, we used \eqref{eq:cor_length2}  and our control of $\Lambda_r(N),\, r \geq 2$ as an indirect route to the desired bound. 

   	In retrospect, it may be deduced from the Theorem~\ref{thm:RCM} that \eqref{eq:cor_length1} and \eqref{eq:xi_lower} are actually equalities. 
    We believe that proving the equality in \eqref{eq:cor_length1} using only the random-cluster model is an interesting question. 
\end{remark}

\subsection{From the random-cluster to the Potts model: proof of Theorem~\ref{thm:Potts}} \label{sec:Potts}
Below, we consider the Potts and random-cluster models on the standard lattice $\bbZ^2$; contrarily to previous sections, no reference to the rotated lattice is used. In particular, $\phi^0_{\bbZ^2,p,q}$ and $\phi^1_{\bbZ^2,p,q}$ are infinite-volume measures on $\bbZ^2$ (like in the introduction, and unlike in the previous section).

 The results for the Potts model can be obtained from those for the random-cluster model via a classical coupling, see \cite{EdwSok88,Gri06}. We describe the consequences of this coupling in the theorem below; for a proof, see the references. 
 In the next statement, the operation of {\em attributing a spin $s\in\{1,\dots,q\}$ to a set $S$ of vertices} means that we fix $\sigma_x=s$ for every $x\in S$.
\begin{theorem}\label{thm:coupling}
Fix $\beta>0$ and an integer $q\ge2$. Set $p=1-e^{-\beta}$. 
\begin{itemize}[noitemsep,nolistsep]
\item Consider $\omega$ with law $\phi_{\bbZ^2,p,q}^0$. Then, the law of $\sigma\in\{1,\dots,q\}^{\mathbb Z^2}$ obtained by attributing independently and uniformly a spin in $\{1,\dots,q\}$ to each cluster of $\omega$ is $\mu_\beta^0$.
\item  Fix $i\in\{1,\dots,q\}$ and consider $\omega$ with law $\phi_{\bbZ^2,p,q}^1$. Then, the law of $\sigma\in\{1,\dots,q\}^{\mathbb Z^2}$ obtained by attributing independently and uniformly a spin in $\{1,\dots,q\}$ to each {\em finite} cluster of $\omega$, and spin $i$ to the {\em infinite} clusters\footnote{There is in fact a unique one almost surely, see \cite[Section 4.4.]{Gri06}.} of $\omega$ is $\mu_\beta^i$.
\end{itemize}
\end{theorem}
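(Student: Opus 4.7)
The plan is to establish the coupling first in finite volume on a sequence of boxes $\Lambda_n = [-n,n]^2 \cap \bbZ^2$ with appropriate boundary conditions, and then pass to the infinite-volume limit. Set $p = 1 - e^{-\beta}$. On $\Lambda_n$ with edge-set $E_n$ and boundary $\partial \Lambda_n$, and for $i \in \{0,1,\dots,q\}$, define the joint measure on $\{1,\dots,q\}^{\Lambda_n} \times \{0,1\}^{E_n}$ by
\begin{equation*}
\bbP^i_n(\sigma,\omega) \;\propto\; \prod_{e=\{x,y\}\in E_n}\Bigl[(1-p)\,\ind_{\omega(e)=0} \;+\; p\,\ind_{\omega(e)=1}\,\ind_{\sigma_x=\sigma_y}\Bigr]\,\prod_{x\in\partial\Lambda_n}\ind_{\sigma_x=i},
\end{equation*}
with the convention that when $i=0$ the boundary product is interpreted as $1$ (i.e., no constraint on boundary spins).

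The next step is to compute the two marginals. Summing over $\omega$ edge-by-edge yields, for each edge $\{x,y\}$, the factor $(1-p)+p\,\ind_{\sigma_x=\sigma_y} = e^{-\beta}\exp(\beta\,\ind_{\sigma_x=\sigma_y})$; taking the product and the boundary constraint into account recovers \eqref{eq:Gibbs}, so the $\sigma$-marginal of $\bbP^i_n$ is precisely $\mu^i_{\Lambda_n,\beta}$. Summing over $\sigma$ requires more care: the first product factors over edges and gives $(1-p)^{c(\omega)} p^{o(\omega)}$ multiplied by the number of spin assignments compatible with $\omega$. For $i=0$, each cluster may receive any of the $q$ spins independently, yielding $q^{k_0(\omega)}$ and hence $\phi^0_{\Lambda_n,p,q}$. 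For $i\in\{1,\dots,q\}$, every cluster touching $\partial\Lambda_n$ must take spin $i$, so only non-boundary clusters contribute a free factor of $q$; this gives $q^{k_1(\omega)}$ (all boundary-touching clusters contribute a single forced factor of $1$, matching the wired count) and hence $\phi^1_{\Lambda_n,p,q}$.

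The two conditional laws follow directly from the product form of $\bbP^i_n$. Conditionally on $\sigma$, the edges are independent Bernoulli with parameter $p$ on edges $\{x,y\}$ with $\sigma_x=\sigma_y$ and closed otherwise; this is not needed here. Conditionally on $\omega$, the spins are constant on each cluster of $\omega$, and the $\sigma$-marginal on cluster-labels is uniform on $\{1,\dots,q\}^{\text{clusters}}$ in the free case, and uniform on $\{1,\dots,q\}^{\text{finite clusters}}$ with boundary-touching clusters forced to $i$ in the wired case. This is exactly the sampling recipe in the theorem, performed at level $n$.

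Finally, I would pass to the infinite-volume limit. The random-cluster measures $\phi^i_{\Lambda_n,p,q}$ converge weakly to $\phi^i_{\bbZ^2,p,q}$ for $i\in\{0,1\}$ (this is the standard construction recalled in the introduction), and likewise $\mu^i_{\Lambda_n,\beta}$ converges to $\mu^i_\beta$. Since the sampling map ``attribute i.i.d.\ uniform spins to each cluster, with the wired boundary spin fixed to $i$'' is continuous in the local topology off the event that an infinite cluster fails to exist or fails to be unique, the conclusion follows provided one knows that, under $\phi^1_{\bbZ^2,p,q}$, there is almost surely a unique infinite cluster; this is Burton–Keane-type uniqueness (see \cite[Section 4.4]{Gri06}) and is the one external input needed. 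The main obstacle is therefore this passage to the limit in the wired case: the finite-volume coupling exchanges sums and limits cleanly, but one must verify that the boundary-touching clusters in $\Lambda_n$ merge into the unique infinite cluster at the limit, so that forcing spin $i$ on $\partial\Lambda_n$ passes exactly to forcing spin $i$ on the infinite cluster.
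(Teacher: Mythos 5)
The paper itself gives no proof of Theorem~\ref{thm:coupling}: it presents the statement as the classical Edwards--Sokal coupling and defers entirely to \cite{EdwSok88,Gri06}. Your proposal reconstructs that standard route (finite-volume joint measure, computation of the two marginals and of the conditional law of $\sigma$ given $\omega$, then a weak limit), and the skeleton is the right one. However, as written there are two places where it does not yet yield the statement. First, for $i\in\{1,\dots,q\}$ your joint measure imposes the \emph{hard constraint} $\sigma_x=i$ on $\partial\Lambda_n$, so summing out $\omega$ produces the Potts measure conditioned on monochromatic boundary spins, not the measure \eqref{eq:Gibbs}, whose Hamiltonian only carries the boundary field $-\sum_{x\in\partial V}\ind[\sigma_x=i]$. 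The claim that the $\sigma$-marginal ``recovers \eqref{eq:Gibbs}'' is therefore not literally correct when $i\neq 0$. This is fixable --- either couple the field version of \eqref{eq:Gibbs} to a random-cluster model with a ghost vertex wired to the boundary, or argue that the hard-constraint and field finite-volume sequences have the same weak limit $\mu_\beta^i$ --- but one of these steps has to be supplied, since $\mu_\beta^i$ is defined in the paper as the limit of the field version.

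Second, and more substantially, the infinite-volume passage is only announced, not carried out, and it is precisely the content of the theorem. The conditional law of $\sigma$ restricted to a box $\Lambda_k$ given $\omega$ depends on the connectivity partition of $\Lambda_k$ induced by the whole configuration, which is not a local function of $\omega$; so weak convergence of $\phi^i_{\Lambda_n,p,q}$ alone does not immediately transfer the sampling recipe to the limit. In the wired case one must show, for local observables, that the finite-volume couplings converge to ``$\phi^1_{\bbZ^2,p,q}$ plus i.i.d.\ uniform spins on finite clusters and spin $i$ on the infinite cluster''; besides uniqueness of the infinite cluster this requires, e.g., that the probability that the cluster of a point of $\Lambda_k$ reaches $\partial\Lambda_n$ under $\phi^1_{\Lambda_n,p,q}$ converges to the probability that it is infinite under $\phi^1_{\bbZ^2,p,q}$, which follows from stochastic monotonicity and comparison of boundary conditions but must be proved; a milder version of the same convergence of connectivity partitions is needed in the free case (where $\omega$ may also contain an infinite cluster when $p>p_c$). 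You correctly identify this as ``the main obstacle'', but leaving it unverified means the write-up is an outline of the standard proof rather than a complete argument; these missing steps are exactly what the paper's citation of \cite{EdwSok88,Gri06} is standing in for.
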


Theorem~\ref{thm:coupling} implies immediately the following facts.
\begin{enumerate}[noitemsep,nolistsep]
\item The critical inverse-temperature of the Potts model and the critical parameter of random-cluster model are related by the formula $p_c=1-e^{\beta_c}$.
\item For any $i\in\{1,\dots,q\}$,
\begin{equation*}%\label{eq:point}
\mu_{\beta}^i[\sigma_0=i]=\tfrac1q+\phi_{\bbZ^2,p,q}^1[0\text{ is in an infinite cluster}].
\end{equation*}
\item For any $x,y\in \mathbb Z^2$,
\begin{equation*}%\label{eq:point to point}
\mu_\beta^0[\sigma_x=\sigma_y]=\tfrac1q+\phi^0_{\bbZ^2,p,q}[x\text{ and }y\text{ are in the same cluster}].
\end{equation*}
\end{enumerate}

With these properties at hand, it is elementary to deduce Theorem~\ref{thm:Potts} from Theorem~\ref{thm:RCM}.
Theorem~\ref{thm:Potts} (2)  follows directly from items 1.~and 2.~above combined with (2) of Theorem~\ref{thm:RCM}. Theorem~\ref{thm:Potts} (3) follows from item 3.~above and the expression for $\xi(q)$ obtained in Theorem~\ref{thm:RCM}.

For Theorem~\ref{thm:Potts} (1),
it is well-known (see for instance results in \cite{Gri06}) that a Gibbs measure is extremal if and only if it is ergodic. 
Furthermore, the measures $\phi^0_{\bbZ^2,p,q}$ and $\phi^1_{\bbZ^2,p,q}$ are ergodic for any value of $p\in[0,1]$. 
Since there exists no infinite cluster $\phi^0_{\bbZ^2,p_c,q}$-almost surely (by (3) of Theorem~\ref{thm:RCM}), the construction of $\mu^0_{\beta_c}$ from $\phi^0_{\bbZ^2,p_c,q}$ described in Theorem~\ref{thm:coupling}  implies that $\mu^0_{\beta_c}$ is ergodic as well. 
In the same way, each measure $\mu^i_{\beta_c}$, $i=1,\dots, q$, may be shown to be ergodic (here the existence of an infinite cluster is not problematic, since it is given the fixed spin $i$). 
By Theorem~\ref{thm:Potts} (2), the measures $\mu^i_{\beta_c}$ induce different distributions for the spin of any given vertex, hence they are all distinct.

%\noindent For Theorem~\ref{thm:Potts} (3), recall the notation $x_n = (\lfloor\frac{n}2\rfloor,\lfloor\frac{n}2\rfloor) \in \bbZ^2$.
%The union bound implies that there exists $x$ at graph distance $n$ of the origin such that 
%$$\phi^0_{\bbZ^2,p,q}[0\text{ and }x\text{ are connected}] \ge \frac{1}{4n} \phi^0_{\bbZ^2,p,q}[0\text{ is connected to graph distance $n$}].$$
%Using the FKG inequality and the symmetry under reflection (with respect to the diagonal), we deduce that
%\begin{align*}
%	\phi^0_{\bbZ^2,p,q}[0\text{ and }x_{2n}\text{ are connected}] 
%	&\geq \phi^0_{\bbZ^2,p,q}[0\text{ and }x\text{ are connected}]\phi^0_{\bbZ^2,p,q}[x\text{ and }x_{2n}\text{ are connected}]\\
%	&\ge\Big(\frac{1}{4n} \phi^0_{\bbZ^2,p,q}[0\text{ is connected to graph distance $n$}] \Big)^2.
%\end{align*}
%This shows that 
%\begin{align}\label{eq:asr}
%	 \lim_{n \to \infty}  -\frac{1}n \log \phi^0_{\bbZ^2,p,q}[0\text{ and }x_{n}\text{ are connected}] = \frac1{\xi(q)}
%	 %= \lambda + 2 \sum_{k=1 }^\infty \tfrac{(-1)^k}k \tanh(k\lambda),
%\end{align}
%with $\xi(q)$ defined in~\eqref{eq:cor_len_def}. 
%Point 3. above and the expression for $\xi(q)$ obtained in Theorem~\ref{thm:RCM} lead to the desired result. 
%
\section{Fourier computations}\label{sec:Fourier}

In this section, we gather the computations of certain Fourier-analytic identities used throughout the paper.

\paragraph{Evaluation of the Fourier coefficients of $\Xi_{\lambda}$ and $R$.} Let $m \geq 0$ and consider the contour integral
\[
\frac{1}{2\pi}\int_{C_N} \frac{ \sinh (\lambda) e^{-i m z} }{\cosh (\lambda) - \cos(z)}dz, 
\]
where $C_N$ is the boundary of $[-\pi, \pi] + i [-N, 0]$, oriented clockwise. 
As $N$ goes to infinity, this integral goes to $\hat \Xi_\lambda(m)$. 
Since the only residues of the integrand in the interior of $C_N$ occur at $- i \lambda$ , we conclude that 
\[
\hat \Xi_\lambda(m) = e^{- \lambda m} \quad \quad m \geq 0.
\]
If $m < 0$, we integrate around $C'_N$, the boundary of $[-\pi, \pi] + i [0,N]$, oriented counterclockwise. 
The residue will now be at $i \lambda$, and 
\[
\hat \Xi_\lambda(m) = e^{\lambda m} \quad \quad m < 0.
\]
Via \eqref{eq:rfourier}, this implies 
\begin{equation}\label{eq:Rhat}
\hat R(m) = \frac{e^{-\lambda |m|}}{1 + e^{-2 \lambda |m|}} = \frac{1}{2 \cosh(\lambda m)}. 
\end{equation}

\paragraph{Evaluation of the Fourier coefficients of $\Psi$ and $T$.}
To evaluate $\hat{\Psi}$, we first note that $k(\alpha)$ is an odd function, and $\Theta$ is anti-symmetric, meaning $\Psi$ is an odd function and $\hat{\Psi}(0) = 0$. As a consequence, \eqref{eq:rfourier} implies $\hat T(0) = 0$. 

For an integer $m \neq 0$, we first replace $\Theta(k(\alpha),\pi) + \Theta(k(\alpha),-\pi)$ with the equivalent expression $2 [\Theta(k(\alpha),\pi) - \pi]$ (using the fact that $\Theta(x,\pi) = \Theta(x,-\pi) + 2 \pi)$. Then, using integration by parts, we find
\begin{align*}
	\hat \Psi (m) 
	& = \frac{1}{2\pi} \int_{-\pi}^\pi [\Theta(k(\alpha),\pi) - \pi] e^{-im\alpha} d \alpha \\ 
	& = \frac{[\Theta(\pi, \pi) - \Theta(-\pi, \pi)] (-1)^m }{-2\pi i m}
		+\frac{1}{2\pi i m} \int_{-\pi}^{\pi} \frac{d}{d\alpha} \Theta(k(\alpha),\pi) e^{-im\alpha} d \alpha  \\ 
	& = \frac{(-1)^m }{\icomp m} 
		-\frac{1}{ 2\pi i m} \int_{-\pi}^{\pi} \Xi_{2\lambda}(\alpha - \pi) e^{-im\alpha} d \alpha  \\
	& = \frac{(-1)^{m}}{ i m} \left(1 - \hat{\Xi}_{2\lambda}(m)\right), 
\end{align*}
where we used $\Theta(\pi,\pi) - \Theta(-\pi,\pi) = -2\pi$, the change of variable $u = \alpha - \pi$ and the periodicity of $\Xi_{2\la}$ to show that the integral in the penultimate line is equal to $2 \pi (-1)^m \hat \Xi_{2\lambda}(m)$. Thus,
\[
\hat T(m) = \frac{(-1)^m \left(1 - e^{-2 \lambda |m|}\right)}{i m\left( 1 + e^{-2 \lambda |m|}\right)} 
= \frac{(-1)^m}{\icomp m} \tanh (\lambda |m|).
\]
\paragraph{Computations of $R$ and $T$.} We start with $T$. Pairing the terms for $\pm m$, we find%
\footnote{In the formula, the series is not absolutely convergent, however, $\sum_{m=1}^N (-1)^m \tanh (\lambda m) \sin (m\alpha)/m$ converges as $N\to \infty$, and we will consider this as the limit.}
\[
T(\alpha) 
= 2\sum_{m >0} \frac{(-1)^m}{m} \tanh (\lambda m) \left( \frac{e^{im \alpha} - e^{-im\alpha}}{2 i}\right) 
= 2\sum_{m >0} \frac{(-1)^m}{m} \tanh (\lambda m) \sin (m\alpha).
\]
We now turn to $R$. We will show that it is equal to the sum 
\[
\mathcal{R}(\alpha) := \frac{\pi}{2\lambda}\sum_{r \in \mathbb{Z}} \frac{1}{ \cosh[\pi (2 \pi r +\alpha)/(2 \lambda)]} 
\]
by showing that the two have the same Fourier coefficients. By direct computation and the Dominated Convergence Theorem, 
\begin{align*}
\hat{\mathcal{R}}(m) & = \frac{1}{4\la} \sum_{r \in \mathbb{Z}} \int_{-\pi}^{\pi}  \frac{e^{-i m \alpha} d \alpha}{ \cosh[\pi (2 \pi r +\alpha)/(2 \lambda)]}   \\ 
& = \frac{1}{4\la} \int_{-\infty}^{\infty } \frac{e^{-i m \alpha} d \alpha}{\cosh(\pi \alpha/2 \lambda)}
\end{align*}
using the $2\pi$ periodicity of the numerator. 
Observe that the hyperbolic secant function can be written as a {\em continuous} Fourier transform:
\[
\frac{1}{\cosh(\lambda m)} = \frac{1}{2\lambda} \displaystyle \int_{-\infty}^\infty \frac{e^{-i m \alpha} d\alpha}{ \cosh( \pi \alpha/2 \lambda)}.
\]
This concludes the proof since  $\hat R(m)=\tfrac1{2\cosh(\lambda m)}$ by \eqref{eq:Rhat}. 

\paragraph{Computation of the integral on the right-hand side of~\eqref{eq:ag}.}
The change of variable $x = k(\alpha)$ and some elementary algebraic manipulations give
\[
\int_{-\pi}^\pi \log \left| M \left(e^{i x} \right) \right| \rho(x) dx = \frac{1}{2\pi} \int_{-\pi}^\pi P(\alpha) R(\alpha) d \alpha,
\]
with
\[
P(\alpha) : = \log | M(e^{ik(\alpha)})| = \tfrac{1}{2} \log \left(\frac{\cosh(2 \lambda) - \cos(\alpha)}{1 - \cos(\alpha)} \right) = \int_0^\lambda \Xi_{2t}(\alpha) dt.
\]
The final equality may be checked by noticing that the two sides have equal derivatives and are both equal to $0$ when $\la = 0$. 
We note that, even though $P(\alpha)$ is not a bounded function, its singularity at $\alpha =0$ is logarithmic, and hence it is in $L^2([-\pi,\pi])$. Thus, we can use Fubini's Theorem to deduce that
\begin{equation}\label{eq:Phat}
	\hat{P}(m) = \int_{0}^\lambda e^{- 2 t |m|} d t
	=\begin{cases} \ \ \ \ \ \ \ \lambda \quad & \text{if } m = 0, \\ \frac{1 - \exp( - 2\lambda |m|)}{2|m|} & \text{if } m \neq 0. \end{cases} 
\end{equation}
Finally, Parseval's Theorem implies that 
\[
\frac{1}{2\pi} \int_{-\pi}^\pi P(\alpha) R(\alpha)d \alpha  
= \sum_{m \in \mathbb{Z}} \hat P(m) \hat R(-m) 
= \frac{\lambda}{2} + \sum_{m>0}\frac{ e^{- m \lambda} \tanh(\lambda m)}{m} 
\]
using~\eqref{eq:Rhat} in the final equality.

\paragraph{Computation of the integral on the right-hand side of~\eqref{eq:EigenvalueRatio} and~\eqref{eq:agg}.}

We begin our analysis of the second integral by recalling \eqref{eq:f_N_control}, which implies the existence of $C$ such that $|\tau (x)| < C |x|$ for all $x\in [-\pi,\pi]$. 
Thus, although $\ell'(x)$ grows as $1/|x|$ near the origin, the integrand is uniformly bounded.
Using the Dominated Convergence Theorem\footnote{In the formula below, the series in the right-hand side is not absolutely convergent. However, if terms are paired (each odd term with the succeeding even one) the resulting series becomes absolutely convergent. This observation is used here and below.}
 and the explicit computation of $\tau$ in Proposition~\ref{prop:rhoProp}, 
we find
\[
\int_{-\pi}^\pi  \ell'(x) \tau (x) dx 
= \int_{-\pi}^\pi P'(\alpha)\tau(k(\alpha))d\alpha
= \sum_{m >0} \frac{(-1)^m \tanh (\lambda m)}{m} \left[ \frac{1}{\pi}\int_{-\pi}^\pi P'(\alpha) \sin(m\alpha)  d\alpha \right].
\]
Calculating the integrals on the right-hand side is a simple case of integration by parts:
\begin{align*}
	\frac{1}{\pi}\int_{-\pi}^{\pi} P'(\alpha)\sin(m\alpha)d \alpha
	&= \left.\frac{P(\alpha)\sin(m \alpha)}{\pi}\right|_{-\pi}^{\pi} 
		- \frac{m}{\pi}\int_{-\pi}^{\pi}P(\alpha)\cos(m\alpha)d\alpha\\
	&=-m [\hat P(m)+\hat P(-m)]\\
	&=e^{-2\lambda m}-1,
\end{align*}
where we use our earlier computation~\eqref{eq:Phat} for the final line.  Substituting this in \eqref{eq:EigenvalueRatio} yields%
\begin{align*}
	\lim_{N\to \infty} \log \frac{\Lambda_r(N)}{\Lambda_0(N)}  
	= -r\cdot\Big[  \log|\De| - \sum_{m >0} \frac{(-1)^m}m \tanh (\lambda m) (e^{-2\lambda m}-1)\Big].
	% =\log \cosh \lambda + \sum_{m >0} \frac{(-1)^m \tanh (\lambda m) (1 - \exp(-2\lambda m))}m.
\end{align*}
By expanding $\log |\De| =\log \cosh(\lambda)$ in powers of $e^{-\lambda}$ and manipulating the result algebraically, we find that
\[
\log |\De| = \lambda  - \sum_{m >0} \frac{(-1)^m (e^{-2\lambda m} -1)}{m}.
\]
This directly implies
\begin{equation}\label{eq:hg}
	\log|\De| - \sum_{m >0} \frac{(-1)^m}m \tanh (\lambda m) (e^{-2\lambda m}-1) 
	= \lambda + 2 \sum_{m >0 } \frac{(-1)^m}m \tanh (m\lambda).
\end{equation}

\paragraph{Proof of~\eqref{eq:alternative}.}
We wish to show that 
\begin{equation}\label{eq:hgg}
\lambda + 2 \sum_{m \ge1 } \frac{(-1)^m}m \tanh (m\lambda)=\sum_{m \geq 0} \frac{4}{(2m + 1)\sinh \left[\pi^2 (2m +1)/(2 \lambda) \right]}.
\end{equation}
%We turn our attention to the right-hand side.
Let $C_N$ be the boundary of the rectangle $[-(2N +1)/2, (2N +1)/2]+ i[-\pi N/\lambda, \pi N/ \lambda]$, oriented counterclockwise, and consider 
\[
\mathcal{I}_N := \displaystyle \int_{C_N} \frac{\pi \tanh (\lambda z) dz}{z \sin (\pi z)}. 
\]
The integrand has a simple pole at every integer $m$ and at $i \pi (2r+1)/(2\lambda)$ for every integer $r$. A straightforward computation shows that the residues of the integrand at the natural numbers are:
\[
\text{Res} \left(\frac{\pi \tanh (\lambda z)}{z \sin (\pi z)}, m \right) 
= \begin{cases} \frac{ \tanh (\lambda m)}{\cos(\pi m) m} & m \neq 0, \\ \ \ \ \ \lambda & m=0. \end{cases}
\] 
Summing over $m \in [-N,N] \cap \bbZ$ gives the partial sums of the right-hand side of~\eqref{eq:hgg}. Meanwhile, 
\[
\text{Res} \left(\frac{\pi \tanh (\lambda z)}{z \sin (\pi z)}, i \pi (2m+1)/(2\lambda) \right) = \frac{-2}{(2 m+1) \sinh[\pi^2 (2m+1)/(2 \lambda)]} .
\] 
The hyperbolic tangent is bounded away for its poles (and therefore on $C_N$), so  we may deduce that, for some uniform constant $c_0$,
\[
|\mathcal{I}_N | \leq \frac{c_0}{N} \left[\int_{-\pi N/\lambda}^{\pi N/ \lambda} \frac{dt}{\cosh(\pi t)} + \int_{-(2N +1)/2}^{(2N +1)/2} \frac{dt}{|\sin (i\pi^2/\lambda + t)|}\right].
 \]
Both integrals are uniformly finite in $N$, hence $\mathcal{I}_N$ converges to zero. As a consequence, the sum of residues of the integrand converges to zero. 
Using the residues computed above, this implies\footnote{We obtain explicitly 
$\lambda + 2 \sum_{m  = 1}^N \frac{(-1)^m}m \tanh (m\lambda) - \sum_{m =0}^N \frac{4}{(2m + 1)\sinh \left[\pi^2 (2m +1)/(2 \lambda) \right]} \to 0$ as $N \to \infty$. 
}~\eqref{eq:hgg}. 

Upon inspection of the right-hand side of~\eqref{eq:hgg}, we observe that the quantity in the equation is strictly positive whenever $\lambda >0$. 
The asymptotic behaviour of~\eqref{eq:hgg} as $\De$ tends to $-1$ (corresponding to $2\lambda\sim \sqrt{q-4}$ tending to 0) is governed by the first term or the right-hand side, namely $\frac{4}{\sinh \left(\pi^2/(2 \lambda) \right)}\sim 8e^{-\pi^2/(2\lambda)}$.

\bibliographystyle{siam}
\bibliography{biblicomplete}

\end{document}